\documentclass[a4paper]{article}

\usepackage[margin=3cm]{geometry}  
\usepackage{amssymb, amsfonts, amsmath, amsthm, amsxtra}  
\usepackage{thmtools, thm-restate}  
\usepackage{graphics, graphicx}  
\usepackage{old-arrows}  
\usepackage{enumerate}  
\usepackage{authblk}  
\usepackage{hyperref}  
\usepackage{xcolor}  
\usepackage{xspace}


\definecolor{exodus}{RGB}{104, 109, 224}  
\definecolor{asbestos}{RGB}{127, 140, 141}  
\definecolor{crimson}{RGB}{162, 0, 37}
\definecolor{lightcyan}{RGB}{0, 172, 193}
\definecolor{lightpurple}{RGB}{126, 87, 194}

\hypersetup{
	colorlinks=true,
	citecolor=lightcyan,
	linkcolor=lightpurple,
	urlcolor=lightpurple
}



\def\FIGcarpet{1}  
\def\FIGleaf{1}  
\def\FIGgluing{0.8}  
\def\FIGrelaxed{1}  
\def\FIGsdm{1}  
\def\FIGjdis{1}  
\def\FIGmodproto{1}  
\def\FIGmod{1}  
\def\FIGgeomfull{1}  
\def\FIGgeom{1}  
\def\FIGliftscheme{1}  
\def\FIGleaflift{1}  
\def\FIGusm{1}  
\def\FIGminlift{1}  
\def\FIGgridlift{1}  

\def\arxiv{1}


\newcommand{\cc}[1]{\mathcal{#1}}  

\newcommand{\mtt}[2]{\texorpdfstring{#1}{#2}}  


\newcommand{\st}{:}  

\newcommand{\agset}[1]{\mathit{#1}}  


\newcommand{\U}{S}  
\newcommand{\pow}[1]{\cc{P}(#1)}  
\newcommand{\card}[1]{\vert #1 \vert}  


\DeclareMathOperator{\ftr}{\uparrow\!}  
\DeclareMathOperator{\idl}{\downarrow\!}  
\DeclareMathOperator{\upp}{\uparrow}  
\DeclareMathOperator{\downp}{\downarrow}  
\DeclareMathOperator{\dpp}{\updownarrow}  
\DeclareMathOperator{\mt}{\land}  
\DeclareMathOperator{\jn}{\lor}  


\newcommand{\cl}{\phi}  
\newcommand{\cs}{\cc{C}} 
\newcommand{\gen}{\mathrm{gen}}  


\newcommand{\imp}{\rightarrow}  
\newcommand{\is}{\Sigma}  


\declaretheorem[name=Theorem, style=plain]{theorem}
\declaretheorem[name=Lemma, style=plain]{lemma}
\declaretheorem[name=Corollary, style=plain]{corollary}
\declaretheorem[name=Proposition, style=plain]{proposition}
\declaretheorem[name=Question, style=plain]{question}
\declaretheorem[name=Definition, style=definition]{definition}
\declaretheorem[name=Example, style=definition]{example}

\declaretheorem[name=Remark, style=remark]{remark}
\declaretheorem[name=Claim, style=remark]{claim}

\usepackage{marginnote}
\usepackage[export]{adjustbox} 

\definecolor{update}{RGB}{58, 134, 255}
\definecolor{kira}{RGB}{255, 0, 110}
\definecolor{simon}{RGB}{251, 86, 7}
\definecolor{add}{RGB}{131, 56, 236}
\definecolor{todo}{RGB}{255, 190, 11}




\title{On the $E$-base of Finite Lattices: Semidistributive, Modular, and Geometric Lattices}%

\author[1]{Kira Adaricheva}
\author[2]{Simon Vilmin}

\affil[1]{Department of Mathematics, Hofstra University, \mbox{Hempstead, NY 11549, USA.}}
\affil[2]{Aix Marseille Univ, CNRS, LIS, Marseille, France.}

\begin{document}

\maketitle

\begin{abstract}
Implicational bases are a well-known representation of closure spaces and their closure lattices.
This representation is not unique, though, and a closure space usually admits multiple bases.
Among these, the canonical base, the canonical direct base as well as the $D$-base aroused significant attention due to their structural and algorithmic properties.
Recently, a new base has emerged from the study of free lattices: the $E$-base.
It is a refinement of the $D$-base that, unlike the aforementioned implicational bases, does not always accurately represent its associated closure space.
This leads to an intriguing question: for which classes of (closure) lattices do closure spaces have valid $E$-base?
Lower-bounded lattices are known to form such a class.
In this paper, we prove that for semidistributive lattices, the $E$-base is both valid and minimum.
We also characterize those modular and geometric lattices that have valid $E$-base.
Finally, we prove that any lattice is a sublattice of a lattice with valid $E$-base.

\ifx\arxiv\undefined
\keywords{Implicational bases; closure spaces; $E$-base; semidistributive lattices; modular lattices; geometric lattices}
\else
\vspace{.5em}
\noindent \textbf{Keywords:} Implicational bases, closure spaces, $E$-base, semidistributive lattices, modular lattices, geometric lattices
\fi
\end{abstract}

\section{Introduction}

Implicational bases (IBs) are a well-known representation of finite closure spaces and their associated closure lattices. 
Implications go by many names in a broad range of fields, e.g., attribute implications in Formal Concept Analysis \cite{ganter2012formal}, functional dependencies in database theory \cite{maier1980minimum,mannila1992design}, Horn clauses in propositional logic \cite{crama2011boolean}, or join-covers in lattice theory \cite{freese1995free,gratzer2011lattice}.
Since a closure space admits several IBs in general, a rich development of implications, summarized in two surveys \cite{bertet2018lattices,wild2017joy}, emerged from this variety of perspectives.
Moreover, recent contributions \cite{adaricheva2025computing,berczi2024hypergraph,bichoupan2023independence} show that IBs are still an active topic of research.
Among possible IBs, two have attracted interest due to their structural and algorithmical properties.
First, the canonical (or Duquenne-Guigues) base \cite{day1992lattice,guigues1986familles,wild1994theory}, has a minimum number of implications and is reflected in any other IB by means of pseudo-closed and essential sets.
Second, the canonical direct base, surveyed in \cite{bertet2010multiple}, is based on minimal generators and is the smallest IB enjoying directness, being the property of computing closures with only one pass over the implications.

Translating the language of join-covers in lattices \cite{freese1995free} into implications, Adaricheva et al.~\cite{adaricheva2013ordered} recently introduced two new specimens in the zoo of IBs: the $D$-base and the $E$-base.
These bases find their origin in lattice theory where special relations between join-irreducible elements were developed to tackle notoriously hard problems.
The history begins at the cross-section of Whitman's conjecture\footnote{He asked his question in the same paper where he solved the fundamental problem of Birkhoff~\cite{birkhoff1935structure} about representing lattices by equivalence relations.} related to lattices of partitions \cite{whitman1946lattices} and J\'onsson's conjecture about the structure of finite sublattices of free lattices \cite{jonsson1962finite}.
On their way to solve the first problem, Pudl\`ak and T\r{u}ma used a special binary relation on join-irreducible elements, called the $C$-relation \cite{pudlak1974yeast}.
Only a few years later, they solved Whitman's problem in a celebrated paper \cite{pudlak1980every}.
While working on J\'onsson's conjecture, Day~\cite{day1979characterizations} used the $C$-relation, which became the $D$-relation when the solution to the problem---brought by Nation in his famous paper \cite{nation1982finite}---was incorporated to the monograph on free lattices of Freese, Je\v{z}ek~ and Nation \cite{freese1995free}.
There, the authors highlighted the connection between the $D$-relation and the so-called minimal join-covers that will later become the $D$-base.
In investigating lattices without $D$-cycles (cycles in the $D$-relation), they also unveiled an important subset of minimal join-covers along with a subset of the $D$-relation, which they called the $E$-relation. 
This subset of join-covers will become what is now known as the $E$-base.

The $D$-base is a subset of the canonical direct base that still enjoys directness as long as implications are suitably ordered.
It already proved its efficiency in a handful of data analysis applications \cite{adaricheva2023importance,adaricheva2017discovery,adaricheva2015measuring,nation2021combining} and its algorithmic aspects have been addressed in a recent paper~\cite{adaricheva2025computing}.
The $E$-base on the other hand is a refinement of the $D$-base obtained by retaining implications satisfying a minimality criterion within the closure lattice.
Borrowing tools from free lattices \cite{freese1995free}, especially the $D$-relation, Adaricheva et al.~\cite{adaricheva2013ordered} show that in closure spaces without $D$-cycles---closure spaces with lower-bounded lattices---the $E$-base is a valid IB. 
This encompasses in particular Boolean and distributive lattices where the $E$-base is trivially valid.
Furthermore, the main property of the $D$-base, its feature of directness, is still retained in the $E$-base of such spaces. 
Even more importantly, the number of implications in the (aggregated) $E$-base is the same as in the canonical base, making it minimum.
Yet, unlike the $D$-base and other mentioned IBs, the $E$-base does not always encode faithfully its closure space.
This prompts the next stimulating question, which constitutes the main motivation of our paper:

\begin{question} \label{ques:E-valid}
For what classes of (closure) lattices do closure spaces have valid $E$-base?
\end{question}

For such classes, the $E$-base could be a base of choice given that it is shorter than the $D$-base and may still possess the advantages the $D$-base has over other existing bases. 

In order to study this question, we first relate the $E$-base to (join-)prime elements in lattices on the one hand, and to the canonical base on the other hand, and more precisely to the related quasi-closed, pseudo-closed, and essential sets.
Leveraging from these relationships, we study three classical generalizations of Boolean and/or distributive lattices, namely semidistributive, modular, and geometric lattices.
Since all three classes do present $D$-cycles in general, the approach used in \cite{adaricheva2013ordered,freese1995free} for lower-bounded lattices cannot apply.
Regarding semidistributive lattices, our main result is that they have valid $E$-base.
\ifx\arxiv\undefined
\begin{theorem} \label{thm:E-valid-SD}
The (aggregated) $E$-base of a standard closure space with semidistributive lattice is valid and minimum.
\end{theorem}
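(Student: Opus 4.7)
My approach exploits the canonical join representations provided by (join-)semidistributivity. In a finite SD-lattice $L$, every non-prime join-irreducible $j$ admits a distinguished canonical minimal non-trivial join-cover $C(j)$, characterized among all minimal non-trivial join-covers of $j$ by a universal property under refinement. I expect $C(j)$ to coincide exactly with the premise selected by the minimality criterion defining the $E$-base. Combining this with the characterization of the $E$-base via join-primes and essential sets that the paper develops earlier, the aggregated $E$-base should reduce to the family
\[ \{\, C(j) \to j : j \in J(L),\ j \text{ not join-prime} \,\}, \]
together with the trivial implications for the join-primes.

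For \textbf{validity}, since the $D$-base is known to generate the true closure operator, it suffices to derive every $D$-implication $A \to j$ from the $E$-base. I would argue by induction on $j$ in the order of $L$. Given a minimal non-trivial join-cover $A$ of $j$, semidistributivity relates $A$ to the canonical $C(j)$ by refinement, so that for each $c \in C(j)$ there is a portion $A_c$ of $A$ that join-covers $c$; the inductive hypothesis, applicable since $c < j$, yields $c \in \mathrm{cl}_E(A_c) \subseteq \mathrm{cl}_E(A)$. Once $C(j) \subseteq \mathrm{cl}_E(A)$, the single $E$-implication $C(j) \to j$ concludes $j \in \mathrm{cl}_E(A)$.

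For \textbf{minimality}, I would compare the aggregated $E$-base to the canonical (Duquenne--Guigues) base, which is known to have the smallest number of implications. Leveraging the essential-set description already established in the paper, semidistributivity should make each essential set correspond bijectively to a unique non-prime join-irreducible, with premise matching $C(j)$. This yields an equality of sizes, and hence minimality of the $E$-base.

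The \textbf{main obstacle} is validity. The inductive argument along the $D$-relation used in the lower-bounded case is unavailable here because SD-lattices may contain $D$-cycles. Replacing the $D$-induction by an induction on the strict order of $L$ (which is always well-founded) bypasses this, but the crux is then to show that every member of $C(j)$ lies strictly below $j$ in $L$ \emph{and} that an appropriate sub-join-cover $A_c$ of $c$ can be extracted from the given $A$. The precise way in which a minimal non-trivial join-cover \emph{distributes} over the canonical one via refinement is the technical heart of the argument, and I expect it to require the strongest consequences of join-semidistributivity for the structure of minimal join-covers.
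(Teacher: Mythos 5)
Your validity argument contains a false key step. You propose to induct on $j$ in the order of the lattice and to apply the inductive hypothesis to the members $c$ of the canonical join-cover $C(j)$, which requires (as you yourself identify as the ``crux'') that every member of $C(j)$ lies strictly below $j$. This cannot hold: if every element of a non-trivial join-cover of a join-irreducible $j$ were strictly below $j$, then $j$ would be a join of strictly smaller elements, contradicting join-irreducibility. In general the members of minimal non-trivial join-covers are incomparable to $j$, which is precisely why the $D$-relation is not contained in the order and why $D$-cycles occur in semidistributive lattices (e.g.\ the cycle $c\,D\,f\,D\,g\,D\,d\,D\,c$ in Example~\ref{ex:carpet}). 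So the well-founded induction you rely on has no ground to stand on, and the circularity is not an artifact of presentation: in Example~\ref{ex:leaf}, deriving $\agset{ad}\imp b$ would need $c\in\is_E(\agset{ad})$ via $C(c)=\agset{bd}$, which in turn needs $b$, and the $E$-base is indeed invalid there. This also exposes a second, structural gap: your argument only invokes consequences of \emph{join}-semidistributivity (canonical covers and refinement), but join-semidistributivity alone is not sufficient --- Example~\ref{ex:leaf} is a convex geometry, hence join-semidistributive, with invalid $E$-base, and Example~\ref{ex:SDm-fail} shows meet-semidistributivity alone fails too. Any correct proof must use both halves; in the paper this happens in Lemma~\ref{lem:pseudo-E}, whose proof combines Theorem~\ref{thm:coatoms-SDj} (join-SD: the coatoms are coprime and the canonical spanning set consists of primes) with the uniqueness of the meet-irreducible $M$ with $x \dpp M$ (meet-SD, Theorem~\ref{thm:SD-arrows}) to show that the canonical spanning set of an essential set is an $E$-generator of suitable elements in the canonical spanning sets of its predecessors. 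The paper then inducts on \emph{closed sets} (not on elements), showing $\is_E(Y)$ is quasi-closed, hence contains the unique pseudo-closed set, and then reaches every predecessor via Lemma~\ref{lem:pseudo-E}.

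Your reduction of the aggregated $E$-base to $\{C(j)\imp j : j \text{ non-prime ji}\}$, with a bijection between essential sets and non-prime join-irreducibles, is also unsubstantiated. What is true (and what the paper proves for minimality) is a bijection between $E$-generator premises and non-ji essential sets: by join-semidistributivity each non-ji essential set has a unique $\cl^b$-minimal spanning set, which Lemma~\ref{lem:pseudo-E} shows is an $E$-generator, and by Lemma~\ref{lem:E-QC} every $E$-generator spans an essential set; combined with the bijection between essential sets and pseudo-closed sets (Proposition~\ref{prop:SDj-UC}) this matches the size of the canonical base. A single premise may be an $E$-generator of several elements (so aggregation is by premise, i.e.\ by essential set, not by join-irreducible), and a non-prime $j$ need not determine a unique canonical cover; your count would therefore not directly compare to the Duquenne--Guigues base without the essential-set bookkeeping.
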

\else
\begin{theorem}[restate=THMEvalidSD, label=thm:E-valid-SD]
The (aggregated) $E$-base of a standard closure space with semidistributive lattice is valid and minimum.
\end{theorem}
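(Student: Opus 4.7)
The plan rests on splitting semidistributivity into its two halves, using join-semidistributivity (SD$_\vee$) for validity and meet-semidistributivity (SD$_\wedge$) for the minimum count. From the earlier material I assume the characterization of the $E$-base: for each join-irreducible $x$ of the closure lattice $L$, the $E$-base contains the implications $A \to x$ where $A$ ranges over the $E$-covers of $x$, that is, minimal join-covers of $x$ that are minimum (with respect to the join they produce) among all minimal covers of $x$ lying below some fixed element $u \ge x$. The governing structural fact, which I would invoke at the outset, is that in a finite join-semidistributive lattice, for every join-irreducible $x$ and every $u \ge x$ there is a \emph{unique} minimal join-cover of $x$ lying in $[0,u]$ with smallest join; this is the standard SD$_\vee$ consequence from \cite{freese1995free}, and it is exactly what singles out the $E$-covers inside the $D$-covers.

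For validity, let $A \to x$ be an implication that holds in the closure space, with $x$ join-irreducible and $u := \bigvee A$. By the structural fact above applied to $(x,u)$, there is an $E$-cover $B$ of $x$ with $\bigvee B \le u$, so $B \to x$ is in the $E$-base. It then suffices to derive each $y \in B$ from $A$ using only $E$-implications. I would proceed by induction on the rank of $y$ in $L$: since $B$ is a minimal cover of $x$, every $y \in B$ lies strictly below $x$ in rank, and $A \to y$ is still a valid implication (because $y \le \bigvee B \le \bigvee A$), so by induction $A$ derives $y$ via the $E$-base, and hence $A$ derives $x$. This sidesteps the absence of $D$-acyclicity by inducting on rank rather than on the $D$-relation itself.

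For minimality, I would reduce to showing that the number of aggregated $E$-implications equals the number of implications of the canonical base. By construction, the aggregated $E$-base contains exactly one implication per join-irreducible $x$ of $L$. On the canonical-base side, its implications are in bijection with pseudo-closed classes, i.e., (by the theory developed earlier in the paper) with certain essential elements of $L$. Here SD$_\wedge$ enters: in a meet-semidistributive lattice each essential set is determined by a unique join-irreducible, yielding a bijection between join-irreducibles and pseudo-closed classes. Matching cardinalities then gives that the aggregated $E$-base is minimum.

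The main obstacle I expect is the validity step, precisely because semidistributive lattices may have $D$-cycles, so the comfortable inductive framework used for lower-bounded lattices in \cite{adaricheva2013ordered} is not available. Replacing $D$-induction by a rank induction is conceptually clean, but the delicate point is verifying that every element $y$ of an $E$-cover is amenable to the inductive step with only $E$-implications (and not, for instance, requiring a $D$-implication that has been discarded). This is where SD$_\vee$ must do real work: it ensures that the canonical minimum cover relative to any upper bound $u$ is itself produced by $E$-implications, so the induction closes without appealing to non-$E$ covers. I would also anticipate some care in the minimality step to align the notion of essentiality used in the $E$-base construction with the essential-set formalism supporting the canonical base under SD$_\wedge$.
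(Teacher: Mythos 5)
Your split of the two halves of semidistributivity is exactly backwards, and the validity half of your plan cannot be repaired as stated. You propose to prove validity from join-semidistributivity alone, but Example~\ref{ex:leaf} in the paper is a convex geometry, hence join-semidistributive, whose $E$-base is \emph{not} valid: the essential set $\agset{abcd}$ is spanned by no $E$-generator, even though (as in your argument) each of $b$ and $c$ does admit an $E$-generator whose closure lies strictly below $\cl(\agset{ad})$. This pinpoints the gap in your derivation step: knowing that some $E$-cover $B$ of $x$ satisfies $\bigvee B \le \bigvee A$ (which, incidentally, needs only finiteness, not $SD_\vee$) does not let you conclude that the elements of $B$ are reachable from $A$ by $E$-implications; in Example~\ref{ex:leaf} no premise of the $E$-base ever fits inside $\is_E(\agset{ad})=\agset{ad}$. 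Your proposed induction also does not close: the members of a minimal join-cover of $x$ are incomparable to $x$ and may have strictly larger rank (in Example~\ref{ex:carpet} the $E$-generator of $d$ is $\agset{bg}$, and $\cl(g)$ sits well above $\cl(d)$), so ``every $y\in B$ lies strictly below $x$ in rank'' is false and the rank induction has no decreasing quantity. Meet-semidistributivity must do real work in the validity proof; in the paper it enters through the key Lemma~\ref{lem:pseudo-E} (via the arrow-relation argument of Claim~\ref{claim:MiMj-below-P} and Theorem~\ref{thm:SD-arrows}), which guarantees that the canonical spanning set $A$ of an essential set implies, inside the $E$-base, every element of the canonical spanning set of each predecessor that is missing from the pseudo-closed set; the actual induction is then on closed sets ordered by inclusion, using quasi-closedness of $\is_E(Y)$ and Proposition~\ref{prop:SDj-UC}, not on rank.

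The minimality half is also miscounted. The aggregated $E$-base does not have ``one implication per join-irreducible'': aggregation is by premise, and a single element may have several $E$-generators while a single $E$-generator may imply several elements. The correct bookkeeping, and the one the paper uses, is one non-binary aggregated implication per non-ji essential set: join-semidistributivity (Theorem~\ref{thm:SDj-canonical}) gives a unique $\cl^b$-minimal spanning set per closed set, Lemma~\ref{lem:pseudo-E} shows this canonical spanning set of each non-ji essential set is indeed an $E$-generator, Lemma~\ref{lem:E-QC} shows every $E$-generator spans an essential set, and Proposition~\ref{prop:SDj-UC} (again an $SD_\vee$ statement, not $SD_\wedge$ as you assert) gives the bijection between essential sets and pseudo-closed sets, whence equality with the size of the canonical base. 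Your claimed $SD_\wedge$ fact that ``each essential set is determined by a unique join-irreducible'' is not a theorem you can lean on, and a bijection between join-irreducibles and pseudo-closed sets is false in general.
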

\fi

For modular and geometric lattices, the situation turns out to be different as the $E$-base may not be valid in general (see Examples~\ref{ex:fail-mod} and \ref{ex:fail-geom}).
However, we characterize lattices with valid $E$-base for both classes.

\ifx\arxiv\undefined
\begin{theorem} \label{thm:modular-Ebase}
The (aggregated) $E$-base of a standard closure space with modular lattice is valid if and only if for every essential set $C$ and any predecessor $C'$ of $C$, $\card{C' \setminus C_*} = 1$, where $C_*$ is the intersection of the predecessors of $C$.
\end{theorem}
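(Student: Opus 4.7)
My plan is to translate the validity of the $E$-base into a structural question about the covering configuration around essential sets in the closure lattice, and then exploit modularity to show that this configuration is controlled exactly by the stated cardinality condition. From the earlier sections of the paper (connecting the $E$-base to essential/pseudo-closed sets of the canonical base), I would first reduce validity of the $E$-base to a local criterion: the $E$-base is valid if and only if, at each essential set $C$, the implication $C \to \cl(C)$ of the canonical base can be recovered from $E$-implications whose premises are minimal join-covers of single join-irreducibles in the closure lattice. Thus the proof naturally splits into ``local validity at $C$ $\Longleftrightarrow$ $\card{C' \setminus C_*} = 1$ for every predecessor $C'$ of $C$.''

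Next I would invoke the key structural fact about modular lattices: if $c$ is an element with maximal proper lower covers (predecessors) $c_1, \dots, c_k$ and $c_* = c_1 \mt \cdots \mt c_k$, then the interval $[c_*, c]$ is a complemented modular lattice, with the $c_i$'s as its coatoms. The closure operator $\cl$ sends $C$ and its predecessors $C'$ into this interval, and the combinatorial quantity $\card{C' \setminus C_*}$ translates into the height of the corresponding coatom over $c_*$ in $[c_*, c]$, restricted to the join-irreducibles of the original lattice. The condition $\card{C' \setminus C_*} = 1$ for every predecessor therefore says precisely that $[c_*, c]$ is generated atomically by singleton extensions of $C_*$, so that each coatom $c_i$ sits immediately above $c_*$ via a single join-irreducible $x_i$.

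For the sufficiency (``if'') direction I would show that, when the condition holds, the family $\{x_1, \dots, x_k\}$ obtained from the predecessors is a minimal join-cover of $\cl(C)$ (using modularity to rule out non-trivial refinements inside $[c_*, c]$), and that each $x_i$ together with the relevant premise yields an $E$-implication in the base. Combined with the $E$-implications coming from $\cl(C_*)$, these assemble into a derivation of $\cl(C)$ from $C$, establishing validity. For the necessity (``only if'') direction, I would assume some predecessor $C'$ satisfies $\card{C' \setminus C_*} \geq 2$ and show that the corresponding coatom of $[c_*, c]$ lies above more than one atom; by modularity (and non-distributivity forced by multiple atoms under one coatom), the resulting join-cover fails the minimality needed to enter the $E$-base, so no $E$-implication can witness the transition from $C_*$ to $C'$. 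This would be made concrete by a direct construction extending the phenomenon of Example~\ref{ex:fail-mod}.

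The main obstacle, I expect, will be the necessity direction: one must produce an implication that genuinely appears in the canonical base but cannot be recovered from $E$-implications, and verifying this absence requires a careful analysis of which minimal join-covers survive the $E$-refinement inside $[c_*, c]$. Modularity is crucial here because in the complemented modular interval $[c_*, c]$, covers involving two or more atoms under a single coatom force redundancies among join-irreducibles that push the corresponding implication out of the $E$-base while keeping it essential. Getting this combinatorial accounting right, and in particular ensuring the argument covers all shapes of $[c_*, c]$ (not only the $M_n$'s but also their products and higher-rank projective geometries), will be the technically delicate part.
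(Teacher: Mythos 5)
Your overall strategy (localize validity at essential sets, then analyze the interval $[C_*, C]$ using modularity) points in the right direction, but as written the plan has genuine gaps. First, you never pin down \emph{which} quasi-closed and pseudo-closed sets span an essential set in a modular lattice, yet validity is precisely the statement that every canonical implication $P \imp C \setminus P$ follows from $\is_E$, so you cannot run either direction without knowing the $P$'s. The paper's proof hinges on Proposition~\ref{prop:wild-modular}: for an essential $C$, the interval $[C_*, C]$ is a \emph{diamond} (so every predecessor covers $C_*$) and the quasi-closed spanning sets are exactly the unions of at least two but not all predecessors. Your appeal to the general fact that $[c_*, c]$ is complemented modular, and your worry about ``products and higher-rank projective geometries,'' miss that essentiality forces height $2$; relatedly, your translation of $\card{C' \setminus C_*} = 1$ as ``the height of the corresponding coatom over $c_*$'' is wrong --- every predecessor has height $1$ over $C_*$ in that interval regardless, and the condition counts ground-set elements (join-irreducibles of the ambient lattice sitting in $C' \setminus C_*$), not rank. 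Your necessity sketch (``the coatom lies above more than one atom'') conflates atoms of $[C_*, C]$, which are the predecessors themselves, with these ground elements, and Example~\ref{ex:modular-prototypical-fail} ($C' = abc$, $C_* = c$) shows the two notions come apart.

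Second, and more seriously, the necessity direction needs an \emph{upper bound} on what the $E$-base can derive from a pseudo-closed set $P$ spanning $C$: it is not enough to argue that one particular join-cover fails the $E$-minimality criterion, because forward chaining could still reach the missing elements through chains of other $E$-implications (with premises inside proper closed subsets of $C$ or using binary implications). The paper closes this hole by first characterizing the almost prime elements of $(\idl C, \subseteq)$ as exactly those $x$ with $C_i \setminus C_* = \{x\}$ for some predecessor $C_i$ (Lemma~\ref{lem:almot-prime-modular}, which uses modularity together with Lemma~\ref{lem:E-QC}), and then computing exactly $\is_E(Q) = Q \cup \{x \st C_i \setminus C_* = \{x\}\}$ for every quasi-closed spanning set $Q$ (Lemma~\ref{lem:mod-E-closure}); faultiness of $P = C_i \cup C_j$ then reads off immediately (Lemma~\ref{lem:modular-faulty}) and the theorem follows. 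Your sufficiency sketch is closer to viable --- singleton differences do give almost prime elements whose $E$-generators sit inside any $\cl^b$-closed spanning set, essentially Corollary~\ref{cor:E-min-span-set} --- but even there you would need to say why $P$ contains a $\cl^b$-minimal spanning set of $C$ and why adding the singleton-difference elements to $P$ recovers all of $C$, which again uses the diamond description $C = C_* \cup \bigcup_i (C_i \setminus C_*)$. Without these ingredients the proposal does not yet constitute a proof.
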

\else
\begin{theorem}[restate=THMmodular, label=thm:modular-Ebase]
The (aggregated) $E$-base of a standard closure space with modular lattice is valid if and only if for every essential set $C$ and any predecessor $C'$ of $C$, $\card{C' \setminus C_*} = 1$, where $C_*$ is the intersection of the predecessors of $C$.
\end{theorem}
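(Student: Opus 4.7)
The plan is to argue both directions by analyzing how essential sets and their predecessors interact with join-covers in a modular lattice, exploiting the link between the $E$-base and the canonical base (through quasi-closed, pseudo-closed, and essential sets) that is developed earlier in the paper. Validity of the $E$-base reduces to showing that every essential implication $C \imp \cl(C)$ is derivable from $E$-implications, and in a modular lattice the predecessor structure above an essential set is rigid enough to settle this combinatorially via the interval $[\cl(C_*),\cl(C)]$.

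For the forward direction, suppose that for every essential set $C$ and every predecessor $C'$, we have $\card{C'\setminus C_*}=1$. Fix such a $C$ and write $\bar C = \cl(C)$. Each predecessor $C'$ contributes a distinguished element $x_{C'} \in C\setminus C_*$, and modularity together with the singleton condition forces the interval $[\cl(C_*),\bar C]$ to be an atomistic complemented interval whose atoms are in bijection with the elements of $C\setminus C_*$. I would then show that for each such atom, the corresponding minimal join-cover involves exactly $C_* \cup \{x_{C'}\}$ and, because this cover cannot be refined inside the closure lattice, it passes the $E$-minimality test and yields an $E$-implication. Aggregating over all predecessors recovers every element of $\bar C \setminus C$, so closing $C$ under the $E$-implications produces $\bar C$, giving validity.

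For the converse, assume some essential set $C$ admits a predecessor $C'$ with $\card{C'\setminus C_*}\geq 2$. I would build an explicit obstruction: by modularity, the two (or more) elements dropped from $C'$ in passing to $C_*$ correspond to distinct atoms of $[\cl(C_*),\bar C]$, and any join-cover of a join-irreducible of $\bar C$ that strictly contains more than one of these atoms fails the $E$-minimality test---a smaller cover witnessed by a single atom already refines it in the closure lattice. Consequently no $E$-implication has its premise sitting inside $C'$ and conclusion reaching all of $\bar C$. Closing $C$ under only the $E$-implications then stalls at a closed set strictly between $\cl(C_*)$ and $\bar C$, contradicting validity of the $E$-base.

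The main obstacle lies in the forward direction, where I must pair each predecessor $C'$ with an $E$-implication whose conclusion genuinely contributes to $\bar C \setminus C_*$, and verify that this implication does belong to the $E$-base under the singleton condition. This rests on controlling the $E$-relation in modular lattices through the covering relation of $[\cl(C_*),\bar C]$, and on showing that the singleton-difference hypothesis is exactly what forces this interval to be atomistic with atoms indexed by $C\setminus C_*$. Once this atomistic decomposition is established, the $E$-minimality of each relevant join-cover follows, and the rest of the argument is bookkeeping; without modularity, or without the singleton condition, the interval can collapse atoms together and the correspondence breaks, which is precisely what the counterexamples referenced in the paper exhibit.
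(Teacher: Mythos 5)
Your overall plan---controlling validity through the interval $[C_*, C]$ and the almost prime elements it produces---is in the same spirit as the paper's argument, but both directions as sketched rest on misidentified objects. The most concrete error is in the converse: if a predecessor $C'$ satisfies $\card{C' \setminus C_*} \geq 2$, those elements do \emph{not} correspond to distinct atoms of $[C_*, C]$. By Proposition~\ref{prop:wild-modular}, $[C_*, C]$ is a diamond whose atoms are the predecessors $C_1, \dots, C_m$, each of which covers $C_*$; all the elements of $C' \setminus C_*$ sit inside the single atom $C'$, so the obstruction you build from ``distinct atoms'' collapses. What actually has to fail is an implication of the canonical base, i.e.\ you must exhibit a faulty pseudo-closed set spanning $C$. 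By the same proposition the quasi-closed sets spanning $C$ are unions of at least two and fewer than $m$ predecessors, so the pseudo-closed sets have the form $C_i \cup C_j$; the paper chooses one avoiding the bad predecessor $C_k$ (possible because the diamond has at least three atoms) and shows that for any quasi-closed spanning set $Q$ one has $\is_E(Q) = Q \cup \{x \st C_i \setminus C_* = \{x\}\}$ (Lemmas~\ref{lem:almot-prime-modular} and~\ref{lem:mod-E-closure}), so no element of $C_k \setminus C_*$ is ever reached. Your sketch instead speaks of ``closing $C$ under the $E$-implications,'' but $C$ is closed; validity is about closing its pseudo-closed spanning sets, which you never identify, and ``no $E$-implication has its premise sitting inside $C'$'' is not the relevant condition, since the premises that matter live inside $\is_E(C_i \cup C_j)$, not inside $C'$.

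The forward direction has a parallel problem. The implications you describe, with premise ``exactly $C_* \cup \{x_{C'}\}$,'' cannot do any work: that set equals the closed predecessor $C'$ and spans nothing beyond itself. The $E$-implications that actually rescue the pseudo-closed sets have premises that are $\cl^b$-minimal spanning sets of $C$ contained in the quasi-closed set, and conclusions the elements $x$ that are almost prime in $(\idl C, \subseteq)$; the crux is precisely the equivalence ``$x$ is almost prime iff some predecessor satisfies $C_i \setminus C_* = \{x\}$'' (Lemma~\ref{lem:almot-prime-modular}), whose proof needs modularity (the covering argument forcing $\cl(y) \cap C_* \prec \cl(y)$, hence $\cl(y)$ ji and $y = x$) together with the pairwise disjointness of the differences $C_i \setminus C_*$---not merely the remark that ``the cover cannot be refined.'' Note also that the diamond structure of $[C_*, C]$ comes from essentiality alone via Proposition~\ref{prop:wild-modular}, not from the singleton hypothesis. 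Without the characterization of almost prime elements and the explicit computation of $\is_E(Q)$ for the quasi-closed spanning sets, neither direction of your argument goes through.
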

\fi

\ifx\arxiv\undefined
\begin{theorem} \label{thm:matroid-valid}
The (aggregated) $E$-base of a standard closure space with geometric lattice is valid if and only if all essential sets are incomparable.
\end{theorem}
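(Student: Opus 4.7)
The plan is to exploit the atomistic and semimodular structure of geometric lattices---where join-irreducibles coincide with atoms and every covering pair in the closure lattice differs by exactly one atom---together with the correspondence, developed earlier in the paper, between $E$-base validity and the canonical implications indexed by essential sets. The ``if'' direction will be pulled back to the modular characterization of Theorem~\ref{thm:modular-Ebase}, while the ``only if'' direction will proceed by contraposition through a concrete minimal join-cover argument.

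For the ``if'' direction, I would assume that all essential sets are pairwise incomparable and first show that this forces the condition of Theorem~\ref{thm:modular-Ebase}: for every essential $C$ and every predecessor $C'$, $\card{C'\setminus C_*}=1$. Since $C_*$ is closed as an intersection of closed sets, the interval $[C_*,C]$ is itself a geometric lattice spanned by atoms of $C\setminus C_*$. If some predecessor $C'$ contained at least two atoms of $C\setminus C_*$, then the closure of $C_*$ with a single one of those atoms would yield a proper closed set strictly between $C_*$ and $C$; using the characterization of essential sets as closures of pseudo-closed generators, this intermediate closed set would produce an essential set strictly below $C$, contradicting incomparability. Once the unit-difference condition is in place, the proof of Theorem~\ref{thm:modular-Ebase} transports verbatim to the geometric setting, since its key inputs are atomisticity of $[C_*, C]$ and the unit-difference condition itself rather than full modularity, and yields validity of the aggregated $E$-base.

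For the ``only if'' direction, suppose essentials $C_1\subsetneq C_2$. I would exhibit a canonical implication concluded by an atom $a\in C_2\setminus C_1$ whose minimal join-cover in $C_2$ is $D$-refined by a cover supported entirely in $C_1$; the $E$-relation then discards the cover reaching $C_2$ in favor of the strictly smaller one inside $C_1$. Iterating the retained $E$-implications, however, only generates atoms inside the closure of $C_1$, so the $E$-base cannot derive $a$ from its pseudo-closed generator, and validity fails. Example~\ref{ex:fail-geom} referenced in the excerpt should provide an explicit witness of this scenario.

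The main obstacle is the structural step in the ``if'' direction, namely converting incomparability of essentials into the unit-difference condition $\card{C'\setminus C_*}=1$. In the modular setting this comes directly from the modular law, but a geometric lattice is only semimodular, so one must combine atomisticity with the defining property of essential sets (as generators of genuinely new canonical implications) to rule out any configuration in which two distinct atoms are added simultaneously by a single predecessor of an essential set; this is the technical crux and the place where the geometric hypothesis and the incomparability assumption must be orchestrated together.
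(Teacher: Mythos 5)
Both directions of your proposal have genuine gaps. For the ``if'' direction, the claimed reduction to the hypothesis of Theorem~\ref{thm:modular-Ebase} is simply false in the geometric setting: incomparability of essential sets does \emph{not} force $\card{C'\setminus C_*}=1$ for predecessors of an essential set. Take the uniform matroid $U_{3,5}$ on five points: the only essential set is the top, its predecessors are the two-point flats, and $C_*=\emptyset$, so every predecessor satisfies $\card{C'\setminus C_*}=2$; yet every element is almost prime, every basis avoiding $x$ is an $E$-generator of $x$, and the $E$-base is valid. Your argument that an intermediate closed set ``would produce an essential set strictly below $C$'' has no justification ($\cl(C_*\cup\{x\})$ need not be, nor contain, any essential set), and the proof of Theorem~\ref{thm:modular-Ebase} cannot be ``transported verbatim'' anyway, because it rests on Proposition~\ref{prop:wild-modular} (the diamond structure of $[C_*,C]$ and the description of quasi-closed spanning sets as unions of predecessors), which fails without modularity. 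The paper's ``if'' direction uses a different and fully general route: Lemma~\ref{lem:incomparable-nonji} and Corollary~\ref{cor:incomparable-valid}, which give validity for \emph{any} closure space whose non-ji essential sets are pairwise incomparable, with no unit-difference condition involved.

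For the ``only if'' direction, the sketch is internally inconsistent and the decisive claim is unsupported. If a join-cover of an atom $a$ were ``supported entirely in $C_1$'', then $a\in\cl(C_1)=C_1$, contradicting $a\in C_2\setminus C_1$; and the assertion that the retained $E$-implications ``only generate atoms inside the closure of $C_1$'' is false in general: in Example~\ref{ex:fail-geom} the $E$-base keeps implications such as $\agset{abd}\imp e$ whose conclusions lie outside the smaller essential set $\agset{abc}$, and non-validity arises only because specific pseudo-closed sets ($\agset{cde}$, etc.)\ cannot reach the non-almost-prime atoms $a,b$. The existence of one discarded implication never suffices, since other retained implications could recover everything; one must exhibit a spanning set whose $\is_E$-closure is proper. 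The paper does this by reducing to the case where $\U$ is essential and all other essentials are incomparable, letting $A$ be the set of almost prime elements, using the base-exchange axiom to build a base $B^*$ with $\card{B^*\cap A}=\min\{b,\card{A}\}$, and proving (Lemma~\ref{lem:matroid-AP}) that $\max_\subseteq\{A,B^*\}$ spans $\U$ but is not mapped onto $\U$ by the $E$-base --- splitting into the cases $Y=A$ ($A$ is $\is_E$-closed) and $Y=B^*$ (an exchange argument produces $y\notin\is_E(B^*)$). Nothing playing this role appears in your proposal, and citing Example~\ref{ex:fail-geom} provides a witness for one lattice, not a proof of the general statement.
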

\else
\begin{theorem}[restate=THMgeometric, label=thm:matroid-valid]
The (aggregated) $E$-base of a standard closure space with geometric lattice is valid if and only if all essential sets are incomparable.
\end{theorem}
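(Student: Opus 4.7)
The plan is to prove both directions of the equivalence, exploiting the characterization of validity of the $E$-base via essential sets (established earlier in the paper) and the combinatorial structure of geometric (atomistic, semimodular) lattices as the lattices of flats of matroids.

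For the necessity direction, I would argue by contrapositive: assume there exist essential sets $C_1 \subsetneq C_2$. Since the lattice is atomistic, the strict inclusion provides at least one atom $a \in C_2 \setminus C_1$ whose presence is essential for generating the closure of $C_2$. The idea is that the minimality criterion defining the $E$-base discards precisely those $D$-implications that can be refined through a smaller essential configuration; the existence of $C_1 \subsetneq C_2$ causes the implication needed to capture the closure of $C_2$ from $C_2$ itself to be eliminated from the $E$-base. I would then construct a witness set whose $E$-base closure is strictly smaller than its canonical closure, contradicting validity. A natural candidate for the witness is $C_2$ itself, together with careful use of the semimodular rank function to ensure the gap is genuine.

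For the sufficiency direction, suppose all essential sets are incomparable. The goal is to show that every implication in the canonical base can be deduced from the $E$-base. The key observation is that, under incomparability, no essential set can be refined through another, so the minimality criterion of the $E$-base does not delete the implications needed to recover the closure of any essential set. Using the exchange axiom of the underlying matroid, I would show that for each essential set $C$ and each of its predecessors $C'$, there remain enough $E$-implications to derive $C' \to \cl(C)$, thereby ensuring that the $E$-base and canonical base define the same closure operator. I expect the atomistic structure to make this step amount to a direct tracking of atoms under the semimodular inequality.

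The main obstacle, I expect, will be the sufficiency direction. Unlike the modular case treated in Theorem \ref{thm:modular-Ebase}, where the characterization is phrased through intersections of predecessors, the condition here is purely about comparability, and it must be shown that this alone suffices to preserve all required implications in the atomistic, semimodular setting. A secondary difficulty is that geometric lattices have $D$-cycles in general, so the techniques developed for lower-bounded lattices in \cite{adaricheva2013ordered} do not apply; the proof must rely directly on the matroid exchange property, the semimodular inequality, and the general tools relating the $E$-base to essential sets developed earlier in the paper.
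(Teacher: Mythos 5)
Your proposal has the difficulty located in the wrong direction, and the direction that is actually hard is left without a workable idea. The sufficiency part requires no matroid exchange work at all: once all essential sets are incomparable, validity is exactly the content of Corollary~\ref{cor:incomparable-valid} (via Lemma~\ref{lem:incomparable-nonji}), which holds for arbitrary closure spaces and even gives $\is_E = \is_{DG}$ in the atomistic case; the paper simply cites it. So your anticipated ``main obstacle'' is already settled by the general machinery of Section~\ref{sec:E-base}.

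The genuine gap is in the necessity direction. First, the witness you propose, ``$C_2$ itself,'' cannot work: $C_2$ is closed, hence fixed by the closure operator of any implicational base, so it can never separate $\is_E(\cdot)$ from $\cl(\cdot)$; you would at least need a well-chosen spanning set of $C_2$. Second, and more seriously, your claimed mechanism --- that comparability ``causes the implication needed to capture the closure of $C_2$ to be eliminated from the $E$-base'' --- is not what happens in general. As Example~\ref{ex:fail-geom} shows, the larger essential set may still be spanned by $E$-generators, because almost prime elements can exist; implications with conclusion inside $C_2$ and closure equal to $C_2$ do survive in $\is_E$, and the failure is the subtler fact that some pseudo-closed set is not mapped onto its closure. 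The actual proof reduces to the case where $\U$ is essential and all other essential sets are incomparable, lets $A$ be the set of almost prime elements (whose minimal generators are exactly the bases avoiding them), produces by repeated base exchange a base $B^*$ with $\card{B^* \cap A} = \min\{b, \card{A}\}$, and then shows that $Y = \max_\subseteq\{A, B^*\}$ spans $\U$ but satisfies $\is_E(Y) \subset \U$ (Lemma~\ref{lem:matroid-AP}); the case $Y = B^*$ moreover needs a forward-chaining argument, using the exchange property, that every derivation of the missing element must pass through an implication whose conclusion is not almost prime and which therefore lies outside $\is_E$. None of these ingredients --- the reduction, the role of almost prime elements, the construction of $B^*$, or the derivation-blocking argument --- appears in your plan, so as it stands the necessity direction is not proved.
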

\fi

As a byproduct, we obtain that geometric modular lattices do have valid $E$-base.
Moreover, we can use the correspondence between geometric lattices and matroids~\cite{oxley2006matroid} to initiate the study of matroids whose associated closure space have valid $E$-base.
In particular, Theorem~\ref{thm:matroid-valid} yields as a corollary that closure spaces of binary matroids do have valid $E$-base. 

Then, looking at Question~\ref{ques:E-valid} from another perspective we build an embedding proving that any lattice is a sublattice of a lattice with valid $E$-base.

\ifx\arxiv\undefined
\begin{theorem} \label{thm:sublattice}
For any standard closure space $(\U, \cl)$ with lattice $(\cs, \subseteq)$, there exists a standard closure space $(T, \psi)$ with lattice $(\cc{F}, \subseteq)$ such that $(\cs, \subseteq)$ is a sublattice of $(\cc{F}, \subseteq)$ and $(T, \psi)$ has valid $E$-base.
\end{theorem}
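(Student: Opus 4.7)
The plan is to construct $(T, \psi)$ by enlarging the ground set of $(\U, \cl)$ with auxiliary ``witness'' elements chosen to destroy the obstructions to $E$-base validity, while keeping $(\cs, \subseteq)$ a sublattice of the resulting closure lattice. A natural target is the condition supplied by Theorem~\ref{thm:matroid-valid}: pairwise incomparability of essential sets. If we can arrange for every essential set of $(T, \psi)$ to carry a private witness not shared with any other essential set, then incomparability becomes automatic---even though $(\cc{F}, \subseteq)$ itself need not be geometric, the argument that establishes validity in the geometric setting should adapt.

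Concretely, I would set $T := \U \sqcup \{t_C : C \in \cc{E}\}$, where $\cc{E}$ is the family of essential sets of $(\U, \cl)$, and define $\psi$ by retaining the implications of $\cl$ on $\U$ and adjoining simple linking rules that route each essential implication $C \imp b$ through its witness $t_C$ (for instance $C \imp t_C$ together with $\{t_C\} \imp b$). The candidate embedding $\iota : \cs \to \cc{F}$ would be $\iota(X) := \psi(X)$. The decisive structural identity to verify is $\iota(X) \cap \U = X$, which yields injectivity and, combined with the rule that $t_C \in \psi(X)$ iff $C \subseteq X$, allows one to transport meets and joins from $\cs$ to $\cc{F}$ so that $\iota$ is a genuine lattice embedding.

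For validity of the $E$-base of $(T, \psi)$, I would analyze how essential sets look in the extended space. The point is that by construction every non-trivial essential set is either an original essential set $C$ now accompanied by the witness $t_C$, or a singleton $\{t_C\}$; either way it carries a distinguished element $t_C$ that belongs to no other essential set. Pairwise incomparability of essential sets is then immediate, after which validity can be concluded using the reasoning developed for the geometric case in the proof of Theorem~\ref{thm:matroid-valid}, together with the general connection between essential sets and the $E$-base developed in the earlier sections.

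The hardest part will be a careful analysis of derived implications in $(T, \psi)$: one must ensure that no chain combining the linking rules with the original implications of $\cl$ creates a new essential set that violates incomparability or the minimality conditions defining the $E$-base, and in particular that no spurious essential set mixing $\U$-elements with witnesses arises. Simultaneously, the linking rules must be chosen tightly enough for this control to hold, yet loosely enough that $\iota$ remains a sublattice embedding; balancing these two requirements is where the technical work will concentrate.
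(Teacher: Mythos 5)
There is a genuine gap, and it sits exactly where your argument puts all its weight: the claim that giving each essential set a private witness makes the essential sets of $(T,\psi)$ pairwise incomparable. It does not. If $C_1 \subset C_2$ are both essential in $(\U,\cl)$, then in your extension the closed set corresponding to $C_2$ contains $t_{C_1}$ as well as $t_{C_2}$ (since $C_1 \subseteq C_2$ triggers $C_1 \imp t_{C_1}$), so the witnessed versions of $C_1$ and $C_2$ remain comparable; a ``private'' element prevents equality, not inclusion. (Incidentally, the relevant general fact here would be Corollary~\ref{cor:incomparable-valid}, not a re-run of the geometric argument of Theorem~\ref{thm:matroid-valid}; but either way the hypothesis is not achieved.) Moreover, under the literal reading of your linking rule, with the essential set $C$ itself as premise of $C \imp t_C$, the construction does not repair faultiness at all: a faulty pseudo-closed set $P$ spanning $C$ still cannot reach $C$ via the $E$-base, because $t_C$ only fires once all of $C$ is already derived, and the $E$-generators available inside $P$ are unchanged. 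Reading the premise as a pseudo-closed set makes the rule ambiguous when $C$ has several pseudo-closed spanning sets and still leaves the general argument unproven.

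The paper's construction differs in two ways that are precisely the missing ideas. First, it inserts one new element per covering pair $(C',C)$ with $C$ a (faulty) essential set and $C'$ a predecessor, i.e., it subdivides every lower cover of $C$; this is what makes each new element almost prime in $\idl C$ and forces every $\psi^b$-minimal spanning set of $C$ to be an $E$-generator of each new element (Lemma~\ref{lem:F-almostprime} and Corollary~\ref{cor:F-Egen}), so that every pseudo-closed set spanning $C$ now closes correctly. A single global witness per essential set cannot play this role. Second, even this stronger repair is not one-shot: lifting can create \emph{new} faulty essential sets strictly above the lifted ones (Figure~\ref{fig:faulty-lifting}), so the paper must iterate the operation and prove termination via Lemma~\ref{lem:faulty-increasing-lift}. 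Your proposal has no mechanism for either point, and the ``hardest part'' you defer is exactly where these two ideas are needed; as it stands the argument does not go through.
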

\else
\begin{theorem}[restate=THMsub, label=thm:sublattice]
For any standard closure space $(\U, \cl)$ with lattice $(\cs, \subseteq)$, there exists a standard closure space $(T, \psi)$ with lattice $(\cc{F}, \subseteq)$ such that $(\cs, \subseteq)$ is a sublattice of $(\cc{F}, \subseteq)$ and $(T, \psi)$ has valid $E$-base.
\end{theorem}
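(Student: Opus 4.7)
The plan is to construct $(T, \psi)$ by augmenting $\U$ with a fresh ``marker'' element for each non-trivial implication of the canonical base of $(\U, \cl)$. Concretely, set $T = \U \cup \{t_P : P \text{ is pseudo-closed in } (\U, \cl)\}$ and take as an implicational base of $\psi$ the collection obtained by replacing each canonical implication $P \to \cl(P)$ with the pair of implications $P \to \{t_P\}$ together with $\{t_P\} \to \cl(P)$. Intuitively, each marker $t_P$ ``splits'' a pseudo-closed implication into a body-implication that creates $t_P$ and a singleton-body implication that finishes the closure. Since the $E$-base draws its expressive power mainly from singleton-body implications rooted at join-irreducibles, routing all the complex implications through markers is precisely the kind of move that should make the resulting $E$-base valid, and in fact minimum.

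The verification would then proceed in three steps. First, check that $(T, \psi)$ is a standard closure space and that the map $A \mapsto A \cup \{t_P : P \subseteq A \text{ pseudo-closed in } (\U,\cl)\}$ is an injective lattice homomorphism $\cs \hookrightarrow \cc{F}$: the added implications never produce elements of $\U$ from subsets of $\U$ beyond what $\cl$ already does, so $\psi(A) \cap \U = \cl(A)$ for every $A \subseteq \U$, and the map commutes with intersections and joins because joins in both lattices are computed as closures of unions. Second, identify the pseudo-closed and essential sets of $(T, \psi)$ in terms of those of $(\U, \cl)$: each marker $\{t_P\}$ becomes an essential singleton with $\psi(\{t_P\}) = \cl(P) \cup \{t_P\}$, while every essential set of the original space is either replicated or absorbed into such a singleton. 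Third, invoke the relationship between the canonical base, the $D$-base and the $E$-base developed earlier in the paper to conclude that the implications $\{t_P\} \to \cl(P)$, together with the body implications $P \to \{t_P\}$, constitute a valid IB that coincides with the (aggregated) $E$-base of $(T, \psi)$.

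The main obstacle is controlling the $D$-relation of $\cc{F}$: adding markers could in principle create new minimal non-trivial join-covers among the original elements of $\U$, producing spurious entries in the $E$-relation that would break validity. Ruling this out requires a careful analysis of the join-irreducibles of $\cc{F}$ and a verification that every $D$-cover in $\cc{F}$ either descends from a $D$-cover of $\cs$, and is therefore captured by the routed implications, or involves a marker $t_P$ and is thus already encoded in the base by construction. This is where I expect the bulk of the technical work to lie; in case a clean one-shot argument is elusive, a fallback is to iterate the marker construction on $(T, \psi)$ until the essential sets stabilize as singletons, at which point validity of the $E$-base becomes automatic while the original lattice $\cs$ remains a sublattice of each intermediate $\cc{F}$.
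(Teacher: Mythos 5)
Your construction has a fatal structural flaw that appears well before the $D$-relation issues you flag as the ``main obstacle'': it does not change the closure lattice at all. For any $F \subseteq T$ closed under your implications, $t_P \in F$ forces $\cl(P) \subseteq F$, hence $P \subseteq F$; conversely $P \subseteq F$ forces $t_P \in F$. So $t_P \in F$ if and only if $P \subseteq F$, and moreover $F \cap \U$ is $\cl$-closed (every pseudo-closed $P \subseteq F \cap \U$ gives $t_P \in F$ and then $\cl(P) \subseteq F \cap \U$). Hence the closed sets of $(T, \psi)$ are exactly the sets $C \cup \{t_P \st P \subseteq C\}$ with $C \in \cs$, i.e., $(\cc{F}, \subseteq) \cong (\cs, \subseteq)$. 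Each marker satisfies $\psi(t_P) = \psi(\cl(P))$: it is a redundant generator of an already existing closed set, not a new lattice element, so your claim that $\{t_P\}$ becomes a new essential singleton is false. In particular $(T,\psi)$ is not standard --- $\psi(t_P)\setminus\{t_P\}$ still contains $P$ and violates $P \imp t_P$ --- while the theorem demands a standard space. Since for standard closure spaces the $E$-base, and hence its validity, is determined by the lattice up to isomorphism (elements correspond to join-irreducibles), any construction leaving the lattice unchanged cannot repair an invalid $E$-base: standardizing your space simply returns the original one, and iterating the marker construction (your fallback) achieves nothing. Concretely, on Example~\ref{ex:leaf} your base is $ac \imp t_{ac}$, $t_{ac} \imp abc$, $bd \imp t_{bd}$, $t_{bd} \imp bcd$, $ad \imp t_{ad}$, $t_{ad} \imp abcd$; its closed sets are in bijection with the old ones and the faulty configuration persists.

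What is actually needed is to enlarge the lattice. The paper's proof inserts, for each faulty essential set $C$ and each predecessor $C'$ of $C$, a genuinely new doubly irreducible closed set strictly between $C'$ and $C$ (subdividing the corresponding edge of the Hasse diagram, a one-point extension). The new elements are almost prime below $C$, every $\psi^b$-minimal spanning set of $C$ becomes an $E$-generator of them, and $C$ ceases to be faulty, while $(\cs, \subseteq)$ survives as a sublattice. A further point your sketch misses even in spirit is that one pass is not enough: lifting can create new faulty essential sets strictly above the lifted ones (see Figure~\ref{fig:faulty-lifting}), so the construction must be iterated, with a separate termination argument (Lemma~\ref{lem:faulty-increasing-lift}). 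If you want to salvage the marker idea, the markers must have closures that are \emph{new} closed sets strictly below the essential set being repaired, e.g.\ $\psi(t) = C' \cup \{t\}$ for a predecessor $C'$ of $C$, rather than closures coinciding with $\cl(P)$ as in your proposal.
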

\fi

This result shows that lattices with valid $E$-base cannot be characterized by forbidden sublattices or universal sentences.

\paragraph{Organization of the paper.} In Section \ref{sec:preliminaries} we give necessary definitions and properties regarding closure spaces, lattices and implications.
In Section \ref{sec:E-base}, we introduce the $E$-base.
We provide examples to illustrate how it can fail being a valid IB of its associated closure space.
Moreover, we put the light on its relationships with other IBs, especially the canonical base.
Sections~\ref{sec:semidistributive}, \ref{sec:modular} and \ref{sec:geometric} are respectively devoted to semidistributive, modular and geometric lattices.
In Section~\ref{sec:sublattice}, we dive into the class of lattices with valid $E$-base.
We conclude the paper in Section \ref{sec:conclusion} with some questions for future work.

\section{Preliminaries} \label{sec:preliminaries}

All the objects we consider in this paper are finite.
We assume basic familiarity with lattices and closure systems and redirect the reader to \cite{gratzer2011lattice} for an introduction to the topic as well as for notions not defined here.
If $\U$ is a set, $\pow{\U}$ denotes its powerset.
Whenever a singleton set $\{X\}$ or $\{x\}$ is used, we may withdraw brackets and use $X$ and $x$ to simplify notations.
Sometimes, mostly in examples, we write a set as the concatenation of its elements, e.g.\ $abc$ instead of $\{a, b, c\}$.

\paragraph{Closure spaces.}
A \emph{closure operator} over $\U$ is a map $\cl \colon \pow{\U} \to \pow{\U}$ that satisfies the following properties for all $X, Y \subseteq \U$: (1) $X \subseteq \cl(X)$, (2) $\cl(\cl(X)) = X$ and (3) $\cl(X) \subseteq \cl(Y)$ if $X \subseteq Y$.
The pair $(\U, \cl)$ is a \emph{closure space}.
Let $\cs$ denote the fixpoints of $\cl$, i.e.\ $\cs = \{C \st C \subseteq \U, \cl(C) = C\}$.
The members of $\cs$ are called \emph{closed sets}.
The pair $(\U, \cs)$ is a \emph{closure system}, that is $\U \in \cs$ and $C_1 \cap C_2 \in \cs$ whenever $C_1, C_2 \in \cs$.
Note that $\cl(X) = \min_{\subseteq}\{C \st C \in \cs, X \subseteq C\} = \bigcap \{C \st C \in \cs, X \subseteq C\}$ for all $X \subseteq \U$.
The closure space $(\U, \cl)$ is \emph{standard} if $\cl(x) \setminus \{x\}$ is closed for every $x \in \U$.

\begin{remark}
Unless otherwise stated, we assume that all closure spaces are standard.
\end{remark}

Let $C \in \cs$.
A \emph{spanning set} of $C$ is any subset $X$ of $\U$ such that $\cl(X) = C$.
We say that $X$ \emph{spans} $C$. 
For $x \in \U$, a \emph{(non-trivial) generator} of $x$ is a subset $Y$ of $\U$ such that $x \notin Y$ but $x \in \cl(Y)$.
We say that $Y$ \emph{generates} $x$.
The family of inclusion-wise minimal generators of $x$ is denoted $\gen(x)$.

Let $(\U, \cl)$ be a closure space.
The \emph{binary part} of $(\U, \cl)$ is the closure space $(\U, \cl^b)$ defined for all $X \subseteq \U$ by:
\[ 
\cl^b(X) = \bigcup_{x \in X} \cl(x)
\]
Observe that any closed set of $\cs$ is also closed for $\cl^b$, or equivalently, $\cl^b(X) \subseteq \cl(X)$ for all $X \subseteq \U$.
Given $X, Y \subseteq \U$, $X$ is a \emph{$\cl^b$-refinement} of $Y$ if $\cl^b(X) \subseteq \cl^b(Y)$. 
If $\cl^b(X) \subset \cl^b(Y)$, $X$ is a \emph{proper} $\cl^b$-refinement of $Y$.
Let $C \in \cs$.
A $\cl^b$-minimal spanning set of $C$ is a spanning set $A$ of $C$ such that for any proper $\cl^b$-refinement $B$ of $A$, $B$ does not span $C$.
Since $B \subseteq A$ entails $\cl^b(B) \subseteq \cl^b(A)$, a $\cl^b$-minimal spanning set of $C$ is also a minimal spanning set of $C$.
Let $x \in \U$.
A \emph{(non-trivial) $\cl^b$-generator} $A$ of $x$ is a set $A$ such that $x \in \cl(A)$ but $x \notin \cl^b(A)$.
In particular, a $\cl^b$-generator $A$ of $x$ is a \emph{$D$-generator} of $x$ if it is $\cl^b$-minimal for this property, that is, $x \notin \cl(B)$ for any proper $\cl^b$-refinement $B$ of $A$.
The collection of $D$-generators of $x$ is denoted $\gen_D(x)$.
Observe that a $\cl^b$-generator of $x$ which is at the same time a $\cl^b$-minimal spanning set of its closed set needs not be a $D$-generator of $x$.
The \emph{$D$-relation} is a binary relation over $\U$ built upon $D$-generators as follows: $x D a$ if there exists a $D$-generator $A$ of $x$ that contains $a$.
A set $Q \subseteq \U$ is \emph{quasi-closed} w.r.t.~$\cl$ if for $X \subseteq Q$, $\cl(X) \subset \cl(Q)$ entails $\cl(X) \subseteq Q$.
Remark that if $\cl(Q) \neq \cl(x)$ for any $x \in \U$, then $Q$ is $\cl^b$-closed.
Indeed, for $x \in Q$, $\cl^b(x) = \cl(x) \subset \cl(Q)$ holds by assumption, so that $\cl^b(x) \subseteq Q$ follows.
If $P \subseteq \U$ is not closed and an inclusion-wise minimal quasi-closed set among the spanning sets of $\cl(P)$, it is \emph{pseudo-closed}.
The closed set $\cl(P)$ is \emph{essential}.

\paragraph{Closure lattices.}
Let $(\U, \cl)$ be a closure space.
The poset $(\cs, \subseteq)$ is a \emph{(closure) lattice} where $C_1 \mt C_2 = C_1 \cap C_2$ and $C_1 \jn C_2 = \cl(C_1 \cup C_2)$.
It is well-known that each lattice is isomorphic to a closure lattice.
In $(\cs, \subseteq)$, a \emph{predecessor} of $C \in \cs$ is a closed set $C'$ such that $C' \subset C$ and for every closed set $C''$, $C' \subset C'' \subseteq C$ means $C'' = C$.
The \emph{successors} of $C$ are defined dually.
We write $C' \prec C$ to denote that $C'$ is a predecessor of $C$ and $C' \succ C$ to denote that $C'$ is a successor of $C$.
We denote by $C_*$ the intersection of all the predecessors of $C$, i.e., $C_* = \bigcap_{C' \prec C} C'$.
Dually, $C^*$ denotes the supremum of all successors of $C$.
For $\cs' \subseteq \cs$, the \emph{ideal} $\idl \cs'$ of $\cs'$ in $(\cs, \subseteq)$ is given by $\idl \cs' = \{C \st C \in \cs, C \subseteq C' \text{ for some } C' \in \cs'\}$.
The \emph{filter} $\ftr \cs'$ of $\cs'$ is defined dually.
If $C \subseteq C'$ are two closed sets, the \emph{interval} between $C$ and $C'$, denoted by $[C, C']$, is the family of all closed sets that include $C$ and that are included in $C'$, that is, $[C, C'] = \{C'' : C'' \in \cs, C \subseteq C'' \subseteq C'\}$.

We define irreducible closed sets.
A closed set $J$ is \emph{join-irreducible}, \emph{ji} for short, if $J \neq \emptyset$ and for every $C_1, C_2 \in \cs$, $J = C_1 \jn C_2$ implies $J = C_1$ or $J = C_2$.
Equivalently, $J$ is ji iff it has a unique predecessor, being $J_*$.
Since $(\cs, \subseteq)$ is assumed standard, $J$ is ji if and only if $J = \cl(x)$ for some $x \in \U$.
As $\cl(x) \neq \cl(y)$ for $x \neq y$, $\cl$ is a bijection between $\U$ and the ji's of $(\cs, \subseteq)$.
Thus, for simplicity, we will identify $x$ with its closure $\cl(x)$ and say that $x$ is ji in $\cs$ with predecessor $x_*$.
A closed set $M$ is \emph{meet-irreducible}, \emph{mi} for short, if $M \neq \U$ and $M = C_1 \cap C_2$ implies $M = C_1$ or $M = C_2$ for any $C_1, C_2 \in \cs$.
Similarly to ji's, $M$ is mi if and only if it has a unique successor, being $M^*$.
An \emph{atom} of $(\cs, \subseteq)$ is a successor of $\emptyset$ ($\emptyset \in \cs$ since $(\U, \cl)$ is standard).
Dually, a \emph{coatom} is a predecessor of $\U$ in $(\cs, \subseteq)$.
Atoms are ji singletons, while coatoms are mi.

We move to arrow relations \cite{ganter2012formal}.
Let $J, M \in \cs$.
We write $J \upp M$ if $M$ is an inclusion-wise maximal closed set not containing $J$ as a subset, that is $J \nsubseteq M$ but $J \subseteq M'$ for every $M' \in \cs$ such that $M \subset M'$.
Note that $J \upp M$ implies that $M$ is mi.
Dually we write $J \downp M$ if $J \nsubseteq M$ but $J' \subseteq M$ for every $J' \subset J$ in $\cs$.
Similarly, $J \downp M$ entails that $J$ is join-irreducible. 
We write $J \dpp M$ if $J \upp M$ and $J \downp M$.
If $J$ is ji, we may write $x \upp M$, $x \downp M$ and $x \dpp M$ where $x$ is the unique element of $\U$ such that $J = \cl(x)$.
We turn our attention to prime and coprime elements \cite{markowsky1992primes}\footnote{The definition of primes and coprimes is switched w.r.t.~\cite{markowsky1992primes}.
In our context it is more natural to consider elements of $\U$ as primes rather than coprimes, since we will mostly use this aspect of primality.}.
A closed set $C$ is \emph{prime} in $(\cs, \subseteq)$ if for every $C_1, C_2 \in \cs$, $C \subseteq C_1 \jn C_2$ implies $C \subseteq C_1$ or $C \subseteq C_2$.
Prime elements must be ji, hence we say $x \in \U$ is prime if $\cl(x)$ is.
Equivalently, $x$ is prime if and only if there exists a unique mi $M$ such that $x \upp M$, in particular $x \dpp M$ must hold.
Moreover, $x$ is prime if and only if it admits only singleton minimal generators, i.e.~$\gen(x)$ contains only singleton sets.
Dually, a closed set $M$ is \emph{coprime} if for all $C_1, C_2 \in \cs$, $C_1 \cap C_2 \subseteq M$ implies $C_1 \subseteq M$ or $C_2 \subseteq M$. 
A coprime $M$ must be mi and there is a unique $J$ such that $M \downp J$, in particular it holds $J \dpp M$.
If $J$ is prime, the unique $M$ such that $J \dpp M$ is coprime.
Thus, $\dpp$ defines a bijection between prime and coprime closed sets.
The $D$-relation can be characterized in terms of arrow relations:  $x D a$ if $x \neq a$, $x \upp M \downp a$ for some mi closed set $M$.
We can also define the dual $D^*$ of the $D$-relation based on mi closed set instead, and where $M_1 D^* M_2$ holds if $M_1 \neq M_2$ and there exists $x \in \U$ such that $M_1 \downp x \upp M_2$.

\paragraph{Classes of lattices and closure spaces.}
We now introduce the classes of lattices we will refer to in this paper. 
Let $(\cs, \subseteq)$ be a (standard, closure) lattice.
If $C_1 \jn C_2 = C_1 \cup C_2$ for any $C_1, C_2 \in \cs$, $(\cs, \subseteq)$ is \emph{distributive}.
Equivalently, $(\cs, \subseteq)$ is distributive iff every element of $\U$ is prime.

We say that $(\cs, \subseteq)$ is \emph{join-semidistributive} if for all $C_1, C_2, C_3 \in \cs$,
$C_1 \jn C_2 = C_1 \jn C_3$ entails $C_1 \jn C_2 = C_1 \jn (C_2 \mt C_3)$.
\emph{Meet-semidistributivity} is defined dually by flipping $\mt$ and $\jn$ operations.
The lattice $(\cs, \subseteq)$ is \emph{semidistributive} if it is both meet and join-semidistributive.
We now recall useful properties regarding semidistributivity.

\begin{theorem}[Theorem 2.1, Corollary 2.2 in \cite{jonsson1962finite}, Corollary 1 in \cite{gorbunov1978canonical}] \label{thm:SDj-canonical}
The lattice of a (finite) closure space $(\U, \cl)$ is join-semidistributive if and only if each closed set $C$ has a unique $\cl^b$-minimal spanning set.
\end{theorem}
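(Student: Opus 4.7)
My plan is to prove this classical Jónsson-Gorbunov characterization by linking $\cl^b$-minimal spanning sets in the closure space $(\U, \cl)$ to canonical join representations in the lattice $(\cs, \subseteq)$. Throughout, I identify $\U$ with the ji's of $\cs$ via $x \leftrightarrow \cl(x)$, so that a spanning set of $C$ is a family of ji's whose join is $C$, and $X$ is a $\cl^b$-refinement of $Y$ precisely when every $x \in X$ satisfies $\cl(x) \subseteq \cl(y)$ for some $y \in Y$.

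For the forward direction, I assume $(\cs, \subseteq)$ is join-semidistributive. I would invoke the classical fact that every element $C$ of a finite join-SD lattice admits a \emph{canonical join representation} $J(C)$: an irredundant set of ji's with $\Jn J(C) = C$ such that every other family $A$ of ji's with $\Jn A = C$ is refined by $J(C)$, meaning each $j \in J(C)$ is below some $a \in A$. One can build $J(C)$ by induction on the height of $C$: for each irredundant decomposition $C = a_1 \jn \dots \jn a_n$, replace each $a_i$ by the meet of all ji's that can substitute for it while still joining to $C$; join-semidistributivity guarantees that this meet is still a valid join-coordinate. Translated back, $\cl^b(J(C)) \subseteq \cl^b(A)$ for every spanning set $A$ of $C$, so $J(C)$ is $\cl^b$-minimal; any other $\cl^b$-minimal spanning set has the same $\cl^b$-value by minimality, which is the required uniqueness.

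For the converse, I argue by contrapositive. If $(\cs, \subseteq)$ fails join-SD, fix $C_1, C_2, C_3 \in \cs$ with $C_1 \jn C_2 = C_1 \jn C_3 = E$ but $C_1 \jn (C_2 \mt C_3) \subsetneq E$. Pick spanning sets $X, Y_2, Y_3 \subseteq \U$ of $C_1, C_2, C_3$ respectively; then $X \cup Y_2$ and $X \cup Y_3$ both span $E$, and refining each to a $\cl^b$-minimal spanning set produces $A_2, A_3$ with $\cl^b(A_j) \subseteq C_1 \cup C_j$. I would then argue that $\cl^b(A_2)$ and $\cl^b(A_3)$ are incomparable: if $B$ were a common $\cl^b$-refinement spanning $E$, then $B \subseteq \cl^b(B) \subseteq (C_1 \cup C_2) \cap (C_1 \cup C_3) = C_1 \cup (C_2 \mt C_3)$, so $\cl(B) \subseteq C_1 \jn (C_2 \mt C_3) \subsetneq E$, contradicting that $B$ spans $E$. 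Hence $E$ admits two distinct $\cl^b$-minimal spanning sets.

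The main obstacle is the forward direction: executing the SD-iteration that produces $J(C)$ cleanly is delicate. I would likely sidestep it by importing the existence of canonical join representations in finite join-SD lattices from Freese-Je\v{z}ek-Nation and translating the statement via the dictionary above. The converse unfolds easily once one observes that the $\cl^b$-value of any set spanning $C_i$ is contained in the closed set $C_i$, a fact that makes the critical intersection $(C_1 \cup C_2) \cap (C_1 \cup C_3) = C_1 \cup (C_2 \mt C_3)$ transparent.
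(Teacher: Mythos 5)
The paper never proves this statement---it is imported with attribution from J\'onsson and Gorbunov as a preliminary---so there is no internal argument to compare against and your proposal must stand on its own. Your converse direction does stand: the dictionary is right (note that $x \in \cl^b(Y)$ already forces $\cl(x) \subseteq \cl(y)$ for a \emph{single} $y \in Y$, so $\cl^b$-refinement really is the lattice refinement order on sets of ji's), every spanning set refines to a $\cl^b$-minimal one by finiteness, and the computation $\cl^b(A_j) \subseteq C_1 \cup C_j$ combined with $(C_1 \cup C_2) \cap (C_1 \cup C_3) = C_1 \cup (C_2 \mt C_3)$ shows that no spanning set of $E$ can $\cl^b$-refine both $A_2$ and $A_3$; in particular $A_2 \neq A_3$, so uniqueness fails at $E$. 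That half is complete and pleasantly elementary.

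The forward direction is where the whole content of the theorem sits, and as written you do not prove it: importing the existence of canonical join representations in finite join-semidistributive lattices from Freese--Je\v{z}ek--Nation is importing exactly the J\'onsson/Gorbunov statement being proved, so that half reduces to a citation of an equivalent theorem. The inline sketch does not close the hole: replacing each joinand $a_i$ of an irredundant decomposition by the meet of all elements that can substitute for it does use the (finitely iterated) join-SD law to keep a valid decomposition, but the resulting meet need not be join-irreducible, the procedure has to be iterated and shown to stabilize at ji's, and---crucially---you must show the representation you end up with refines \emph{every} spanning set of $C$, not just the irredundant one you started from; none of this is carried out. Finally, even granting the canonical representation $J(C)$, your last step only gives that every $\cl^b$-minimal spanning set $A$ satisfies $\cl^b(A) = \cl^b(J(C))$; to get $A = J(C)$ as sets you need one more line (each $a \in A$ satisfies $\cl(a) \subseteq \cl(j) \subseteq \cl(a')$ for some $j \in J(C)$, $a' \in A$, and minimality of $A$ as a spanning set forces $a = a'$). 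So: converse correct and self-contained, forward direction essentially assumed rather than proved, plus a small conflation of ``same $\cl^b$-closure'' with ``same set'' at the end.
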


In the rest of the paper, we call this unique $\cl^b$-minimal spanning set the \emph{canonical spanning set} of $C$.

\begin{proposition}[Proposition 49 in \cite{adaricheva2014implicational}] \label{prop:SDj-UC}
In a closure space with join-semidistributive lattice, each essential closed set is spanned by a unique pseudo-closed set.
\end{proposition}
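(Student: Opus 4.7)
The plan is to fix two pseudo-closed sets $P_1, P_2$ both spanning an essential closed set $C$ and deduce $P_1 = P_2$ by exhibiting $P_1 \cap P_2$ as a quasi-closed spanning set of $C$. Once $P_1 \cap P_2$ is known to be a quasi-closed spanning set of $C$, the inclusion-minimality of each $P_i$ among such sets forces $P_1 \cap P_2 = P_1 = P_2$. Two ingredients must therefore be checked: that $P_1 \cap P_2$ is quasi-closed, and that $\cl(P_1 \cap P_2) = C$.

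The first ingredient is a routine verification: if $X \subseteq P_1 \cap P_2$ satisfies $\cl(X) \subsetneq \cl(P_1 \cap P_2)$, then $\cl(X) \subsetneq \cl(P_i)$ for each $i$ (equality would contradict the strict containment of $\cl(P_1 \cap P_2)$ in $\cl(P_i)$), so quasi-closedness of $P_i$ yields $\cl(X) \subseteq P_i$, hence $\cl(X) \subseteq P_1 \cap P_2$. For the second ingredient, which is where join-semidistributivity enters, I would prove the stronger statement that the unique canonical spanning set $A$ of $C$ provided by Theorem~\ref{thm:SDj-canonical} is contained in every pseudo-closed $P$ spanning $C$. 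Once $A \subseteq P_1 \cap P_2$, the relation $\cl(A) = C$ immediately yields $\cl(P_1 \cap P_2) = C$.

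To establish $A \subseteq P$, I would first invoke the canonical join refinement property that holds in semidistributive lattices: viewing $P$ as a join representation of $C$ via $p \mapsto \cl(p)$, for each $a \in A$ there exists $p \in P$ with $\cl(a) \subseteq \cl(p)$, so that $A \subseteq \cl^b(P)$. To descend from $\cl^b(P)$ into $P$, quasi-closedness applied to singletons does the work: for each $p \in P$ either $\cl(p) \subsetneq C$, in which case $\cl(p) \subseteq P$ by quasi-closedness, or $\cl(p) = C$, in which case $\{p\}$ is itself a quasi-closed spanning set strictly inside $P$ unless $P = \{p\}$, and minimality of $P$ forces $P = \{p\} = A$. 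In either case $\cl^b(P) \subseteq P$, hence $A \subseteq P$. The main obstacle I anticipate is this last inclusion: it requires the refinement property of the canonical join decomposition (a lattice-theoretic feature of SDj) to be combined with quasi-closedness of $P$ through a careful case analysis on whether some $p \in P$ already generates all of $C$; once $A \subseteq P$ is in hand, the rest is a short combinatorial argument using intersection-stability of quasi-closed sets and the minimality clause of pseudo-closedness.
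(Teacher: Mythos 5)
The paper does not actually prove this proposition: it is imported verbatim as Proposition 49 of \cite{adaricheva2014implicational}, so there is no in-paper argument to compare yours against. Judged on its own, your proof is correct and self-contained relative to the paper's definitions: the intersection of two quasi-closed sets spanning the same $C$ is again quasi-closed (your inclusion chain $\cl(X) \subset \cl(P_1 \cap P_2) \subseteq \cl(P_i)$ is what does the work here -- the parenthetical justification you give is slightly misstated, since $\cl(P_1\cap P_2)$ need not be, and in the end is not, strictly below $\cl(P_i)$, but the step itself is fine), and once $A \subseteq P_1 \cap P_2$ is known, minimality of each $P_i$ among quasi-closed spanning sets of $C$ forces $P_1 = P_2$. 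Two points deserve to be spelled out rather than invoked. First, the refinement property you attribute to canonical join representations is not literally the statement of Theorem~\ref{thm:SDj-canonical}; it follows from it by a short argument: starting from any spanning set $P$ of $C$, repeatedly replace it by a proper $\cl^b$-refinement that still spans $C$; finiteness forces this to terminate at a $\cl^b$-minimal spanning set, which by uniqueness is $A$, whence $\cl^b(A) \subseteq \cl^b(P)$ and so each $a \in A$ lies in $\cl(p)$ for some $p \in P$. Second, in the degenerate case where some $p \in P$ has $\cl(p) = C$, the identity $A = \{p\}$ needs the (easy) observation that in a standard space every spanning set of the join-irreducible closed set $\cl(p)$ must contain $p$, so $\{p\}$ is the unique $\cl^b$-minimal spanning set; for your purposes the weaker conclusion $A \subseteq P$ already follows and suffices. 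With these details filled in, the argument is a complete and natural proof of the cited result, using only Theorem~\ref{thm:SDj-canonical} and the paper's definition of pseudo-closed sets.
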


In particular, this pseudo-closed subsumes the canonical representation of its closure.

\begin{theorem}[Corollary 2.55, Theorem 2.56 in \cite{freese1995free}] \label{thm:SD-arrows}
Let $(\U, \cl)$ be a closure space with lattice $(\cs, \subseteq)$.
\begin{enumerate}[(1)]
    \item $(\cs, \subseteq)$ is join-semidistributive if and only if for every meet-irreducible $M$ there exists a unique $x \in \U$ such that $x \dpp M$.
    \item $(\cs, \subseteq)$ is meet-semidistributive if and only if for every $x \in \U$, there exists a unique meet-irreducible $M$ such that $x \dpp M$.
    \item $(\cs, \subseteq)$ is semidistributive if and only if $\dpp$ is bijection between $\U$ and meet-irreducible closed sets.
\end{enumerate}
\end{theorem}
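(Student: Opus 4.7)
The plan is to derive (3) as an immediate corollary of (1) and (2), and to observe that (2) is the order-dual analogue of (1) (swapping the roles of ji/mi, of $\upp/\downp$, and of join/meet-semidistributivity), so the bulk of the argument concentrates on proving (1). The first step is to unfold what $x \dpp M$ says when $x$ is ji (with unique predecessor $x_*$) and $M$ is mi (with unique cover $M^*$): it is equivalent to $x_* \subseteq M$, $x \subseteq M^*$, and $x \nsubseteq M$. I record two elementary consequences that will be used repeatedly: for any ji $y$, a closed set strictly contained in $y$ is contained in $y_*$; and for any mi $N$, a closed set strictly containing $N$ contains $N^*$.

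For the forward direction of (1), assume join-semidistributivity and fix a mi $M$. Existence of an $x \dpp M$: since $M \subsetneq M^*$, pick a ji $x$ inclusion-minimal (as a closed set) satisfying $x \subseteq M^*$ and $x \nsubseteq M$; if $x_* \nsubseteq M$, then any ji contained in $x_*$ but not in $M$ would contradict the minimality of $x$. Uniqueness: suppose $x, x' \dpp M$ are distinct ji's. Then $M \jn x = M \jn x' = M^*$, so join-semidistributivity yields $M \jn (x \mt x') = M^*$. Since $x \neq x'$ forces $x \mt x' \subsetneq x$, by the observation above $x \mt x' \subseteq x_* \subseteq M$, so $M \jn (x \mt x') \subseteq M \subsetneq M^*$, a contradiction. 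For the converse, assume the uniqueness statement and suppose join-semidistributivity fails at some $C_1, C_2, C_3$ with $D := C_1 \jn C_2 = C_1 \jn C_3$ while $E := C_1 \jn (C_2 \mt C_3) \subsetneq D$. Pick $M$ maximal among closed sets that contain $E$ but not $D$; a standard argument shows $M$ is mi and $D \subseteq M^*$. Since $C_2, C_3 \nsubseteq M$, repeating the existence construction inside $C_2$ and inside $C_3$ produces ji's $x_2 \subseteq C_2$ and $x_3 \subseteq C_3$ with $x_2, x_3 \dpp M$. If $x_2 = x_3$, then $x_2 \subseteq C_2 \mt C_3 \subseteq E \subseteq M$, contradicting $x_2 \nsubseteq M$; hence $x_2 \neq x_3$, contradicting uniqueness.

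Claim (2) is proved by the dual argument: assuming meet-semidistributivity, fix a ji $x$ and take $M$ maximal among closed sets containing $x_*$ but not $x$, which is automatically mi and satisfies $x \dpp M$; for uniqueness, two distinct mi's $M, M' \dpp x$ give $x \mt M = x \mt M' = x_*$, so meet-semidistributivity forces $x \mt (M \jn M') = x_*$, contradicting $x \subseteq M^* \subseteq M \jn M'$. The converse mirrors the converse of (1), extracting two distinct mi's $\dpp x$ from a failure of meet-semidistributivity. Finally, (3) is immediate: $\dpp$ is a bijection between ji's and mi's exactly when both fibers (over mi's and over ji's) are singletons, which by (1) and (2) is equivalent to semidistributivity. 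The step I expect to require the most care is selecting the mi $M$ in the converse directions and verifying that it is indeed meet-irreducible with $M^* \supseteq D$ (respectively $M^* \supseteq x$); this is where the maximality choice must be made precisely so that the existence construction from the forward direction can be reused to produce two competing arrows.
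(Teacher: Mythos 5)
The paper does not prove this statement at all --- it is imported verbatim from Freese, Je\v{z}ek and Nation (Corollary 2.55, Theorem 2.56) --- so there is no internal proof to compare against; your argument is the standard one and it is correct. Your unpacking of $x \dpp M$ as $x_* \subseteq M$, $x \subseteq M^*$, $x \nsubseteq M$, the minimal/maximal element choices (which also show that existence of an arrow is automatic, so only uniqueness carries the semidistributivity content), and the identities $M \jn x = M^*$ and $x \mt M = x_*$ are exactly how the cited result is established. One step should be made explicit: when you assert that $x \neq x'$ forces $x \mt x' \subset x$ (and dually that $M^* \subseteq M \jn M'$), you are implicitly using that two distinct join-irreducibles with $x, x' \dpp M$ (resp.\ two distinct meet-irreducibles with $x \dpp M$ and $x \dpp M'$) must be incomparable; this is immediate --- if $x \subset x'$ then $x \subseteq x'_* \subseteq M$, contradicting $x \nsubseteq M$, and if $M \subset M'$ then $x \upp M$ forces $x \subseteq M'$, contradicting $x \nsubseteq M'$ --- but it is a needed line, since the bare inequality $x \neq x'$ alone does not give $x \mt x' \subset x$.
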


\begin{theorem}[Theorem 1 in \cite{gorbunov1978canonical}] \label{thm:coatoms-SDj}
Let $(\U, \cl)$ be a (finite) closure space with join-semidistributive lattice $(\cs, \subseteq)$.
Let $M_1, \dots M_k$ be the coatoms of $(\cs, \subseteq)$.
Then, for all $1 \leq i \leq k$, $M_i$ is coprime and $A = \{a_1, \dots, a_k\}$, where $a_i$ is the unique (prime) $a_i$ such that $a_i \dpp M_i$, is the canonical spanning set of $\U$.
\end{theorem}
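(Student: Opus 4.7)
The plan is to establish the three claims of the theorem in succession: coprimality of each coatom $M_i$, primeness of the associated $a_i$, and then that $A=\{a_1,\dots,a_k\}$ is the canonical spanning set of $\U$. Coprimality comes first. As a coatom, $M_i$ is meet-irreducible with unique successor $\U$. Suppose for contradiction that $C_1, C_2 \in \cs$ satisfy $C_1 \cap C_2 \subseteq M_i$ while $C_1, C_2 \nsubseteq M_i$. The coatom property forces $C_1 \jn M_i = C_2 \jn M_i = \U$, and join-semidistributivity then yields $M_i \jn (C_1 \mt C_2) = \U$; but $C_1 \mt C_2 = C_1 \cap C_2 \subseteq M_i$ gives $M_i \jn (C_1 \cap C_2) = M_i$, forcing $M_i = \U$, a contradiction. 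For primeness, Theorem~\ref{thm:SD-arrows}(1) supplies a unique $a_i \in \U$ with $a_i \dpp M_i$, and the bijection between primes and coprimes via $\dpp$ recalled in the preliminaries, combined with the coprimality of $M_i$ just established, identifies this $a_i$ as the prime element corresponding to $M_i$.

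It then remains to show that $A$ is the canonical spanning set of $\U$, which by Theorem~\ref{thm:SDj-canonical} is the unique $\cl^b$-minimal spanning set. Clearly $A$ spans $\U$: otherwise $\cl(A)$ would be contained in some coatom $M_j$, contradicting $a_j \notin M_j$. To establish $\cl^b$-minimality, suppose $B$ spans $\U$ and $\cl^b(B) \subseteq \cl^b(A) = \bigcup_i \cl(a_i)$. For each $i$, since $a_i$ is prime and $a_i \in \cl(B) = \U$, primeness (equivalently, having only singleton minimal generators) produces some $b \in B$ with $a_i \in \cl(b)$, and hence $\cl(a_i) \subseteq \cl(b) \subseteq \cl^b(B)$. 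This forces $\cl^b(A) \subseteq \cl^b(B)$, so the two coincide, and uniqueness of the $\cl^b$-minimal spanning set identifies $A$ with the canonical spanning set of $\U$.

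The main obstacle is getting the primeness of each $a_i$ off the ground, since join-semidistributivity alone does not make every join-irreducible element prime. The trick is to leverage the coatom property: coupled with join-semidistributivity it forces $M_i$ to be coprime, and then the $\dpp$-bijection transfers coprimality of $M_i$ into primeness of its distinguished join-irreducible $a_i$. Once primeness is in hand, the $\cl^b$-minimality step is essentially forced, because each $a_i$ must be dominated by some element of any spanning set of $\U$.
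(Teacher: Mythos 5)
Your argument is correct, but note that the paper itself offers no proof to compare against: Theorem~\ref{thm:coatoms-SDj} is imported from \cite{gorbunov1978canonical}, where it is established in the more general setting of complete strongly atomic lattices with the continuity condition, whereas you give a self-contained finite argument built from the paper's preliminaries. Each of your three steps checks out. The coprimality of a coatom $M_i$ follows exactly as you say: $C_1, C_2 \nsubseteq M_i$ forces $M_i \jn C_1 = M_i \jn C_2 = \U$, and the join-semidistributive law applied with $M_i$ as the common joinand collapses this to $M_i \jn (C_1 \cap C_2) = \U$, contradicting $C_1 \cap C_2 \subseteq M_i$. The passage to primeness of $a_i$ correctly combines Theorem~\ref{thm:SD-arrows}(1), which supplies the unique $a_i$ with $a_i \dpp M_i$ for the meet-irreducible $M_i$, with the $\dpp$-bijection between primes and coprimes recalled in the preliminaries, so that the prime associated with the coprime $M_i$ must coincide with $a_i$ by uniqueness. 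Finally, the canonical-spanning-set step is sound: $A$ spans $\U$ because a proper closure of $A$ would sit below some coatom $M_j$ while $a_j \notin M_j$; and if $B$ spans $\U$ with $\cl^b(B) \subseteq \cl^b(A)$, primeness of each $a_i$ yields $b \in B$ with $\cl(a_i) \subseteq \cl(b)$, whence $\cl^b(A) = \cl^b(B)$, so no proper $\cl^b$-refinement of $A$ spans $\U$; uniqueness from Theorem~\ref{thm:SDj-canonical} then identifies $A$ as the canonical spanning set. What your route buys is a short, purely finite derivation that uses only machinery already present in the paper (arrow relations, the prime/coprime duality, and Gorbunov's uniqueness of $\cl^b$-minimal spanning sets), at the cost of not covering the infinite, strongly atomic setting treated in the cited source.
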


Observe that the last statement is proved in the more general case of complete strongly atomic lattices with the continuity condition.
If the $D$-relation associated to $(\cs, \subseteq)$ has no cycle, $(\cs, \subseteq)$ is \emph{lower-bounded}.
It is \emph{upper-bounded} if instead the $D^*$-relation is without cycles.
Lower-bounded lattices are join-semidistributive, and dually, upper-bounded lattices are meet-semidistributive.
A lattice which is both lower- and upper- bounded is \emph{bounded}.

The lattice $(\cs, \subseteq)$ is \emph{modular} if for every $C_1, C_2, C_3 \in \cs$ such that $C_1 \subseteq C_2$, $(C_2 \mt C_3) \jn C_1 = C_2 \mt (C_1 \jn C_2)$.
Equivalently, $(\cs, \subseteq)$ is modular if it satisfies both \emph{semimodular laws}: $(\U, \cs)$ is \emph{upper-semimodular} if for every $C_1, C_2 \in \cs$, $C_1 \cap C_2 \prec C_1$ entails $C_2 \prec C_1 \jn C_2$. 
\emph{Lower-semimodularity} is defined dually.
We will make use of the following characterization of essential and quasi-closed sets in modular lattice.
Let us recall that a lattice $(\cs, \subseteq)$ is a diamond if it consists in a bottom, atoms, and a top.
Notice that the first part of the statement was partially stated without proof in~\cite{herrmann1996polynomial} and later proved in~\cite{wild2000optimal}.

\begin{proposition} \cite[Proposition 4]{wild2000optimal} \label{prop:wild-modular}
A non-ji closed set $C$ is essential iff $[C_*, C]$ is isomorphic to a diamond with at least $3$ ji elements.
Moreover, if $C$ has predecessors $C_1, \dots, C_m$, then the quasi-closed sets spanning $C$ are precisely the sets
\[ \bigcup_{i \in I} C_i \quad \text{ with $I \subseteq \{1, \dots, m\}$ and $1 < \card{I} < m$}. \] 
\end{proposition}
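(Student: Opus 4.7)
\noindent The plan is to prove the two assertions together by first characterizing the quasi-closed sets spanning $C$ and then deducing the essentiality criterion from that characterization. I will exploit two consequences of modularity: the upper and lower semimodular laws, and the Jordan--Dedekind chain condition they entail.

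\noindent First, I note that since $C$ is non-ji, every $x \in C$ satisfies $\cl(x) \subsetneq C$ and hence $\cl(x) \subseteq C_i$ for some predecessor $C_i$ of $C$; in particular $C = \bigcup_{i=1}^{m} C_i$. Combined with quasi-closedness, this observation shows that any quasi-closed $Q$ with $\cl(Q) = C$ and $Q \subsetneq C$ must itself be a union of predecessors of $C$. I then study the structure of $[C_*, C]$. By lower semimodularity, $C_i \cap C_j \prec C_i$ and $C_i \cap C_j \prec C_j$ for any two distinct predecessors, from which it follows that $[C_*, C]$ is isomorphic to a diamond with $m$ atoms precisely when all pairwise intersections of predecessors equal $C_*$.

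\noindent Assuming the diamond structure of $[C_*, C]$, I would then verify the ``precisely'' half of the quasi-closed characterization. Any $X \subseteq \bigcup_{i \in I} C_i$ decomposes as $X = \bigcup_{i \in I} X_i$ with $X_i \subseteq C_i$; since the interval $[C_*, C_i]$ equals $\{C_*, C_i\}$ inside the diamond, $\cl(X_i) \jn C_* \in \{C_*, C_i\}$ for each $i \in I$. If two indices yield $C_i$ then modularity forces $\cl(X) = C$; otherwise $\cl(X)$ lies below a single $C_i$ and is therefore contained in $\bigcup_{i \in I} C_i$, which establishes quasi-closedness. The constraint $\card{I} < m$ is precisely what prevents $\bigcup_{i \in I} C_i$ from coinciding with $C = \bigcup_{i=1}^m C_i$, ensuring the union is not already closed.

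\noindent The essentiality criterion then falls out. A pseudo-closed set spanning $C$ must be an inclusion-minimal quasi-closed set of the form $\bigcup_{i \in I} C_i$ with $1 < \card{I} < m$, and the existence of such an $I$ requires both $m \geq 3$ and the diamond structure of $[C_*, C]$. Conversely, when $[C_*, C] \cong M_m$ with $m \geq 3$, I would verify that $P = C_1 \cup C_2$ is pseudo-closed, invoking a third predecessor $C_3$ and the diamond identity $C_3 \cap (C_1 \cup C_2) = C_*$ to certify $P \subsetneq C$. I expect the main difficulty to lie in the case analysis for quasi-closedness, specifically the claim that $\cl(X) \jn C_* = C$ forces $\cl(X) = C$ when $X$ is spread across at least two predecessors; this is exactly where the diamond structure is essential, since it precludes proper closed sets ``complementing'' $C_*$ inside $[\emptyset, C]$.
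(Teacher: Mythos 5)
Your sufficiency half is essentially sound: assuming $[C_*, C] \cong M_m$, the dichotomy $\cl(X_i) \jn C_* \in \{C_*, C_i\}$ and your two-case analysis do give quasi-closedness of $\bigcup_{i \in I} C_i$, and the step you single out as the main difficulty is in fact immediate (if $\cl(X) \subset C$ then $\cl(X)$ lies in some predecessor $C_k$, and $C_* \subseteq C_k$ gives $\cl(X) \jn C_* \subseteq C_k \subset C$, so $\cl(X) \jn C_* = C$ cannot happen). The genuine gap is in the converse directions, and it sits at your very first step: the claim that every quasi-closed $Q$ with $\cl(Q) = C$ and $Q \subset C$ is a union of predecessors of $C$. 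Quasi-closedness plus standardness only gives that $Q$ is a union of closed sets ($\cl(x) \subseteq Q$ for each $x \in Q$), each of which sits \emph{inside} some predecessor; it does not put whole predecessors inside $Q$. This is exactly where modularity must enter: for instance, one can show every maximal closed subset $D$ of $Q$ is a predecessor of $C$ by taking a second maximal closed subset $D'$ (two exist and satisfy $D \jn D' = C$ by quasi-closedness), using the modular isomorphism $[D, C] \cong [D \cap D', D']$, and applying quasi-closedness again to exclude any closed $E$ with $D \cap D' \subset E \subset D'$. Without modularity the claim is simply false: in Example~\ref{ex:leaf} the pseudo-closed (hence quasi-closed) set $ad$ spans $abcd$ but contains neither of its predecessors $abc$, $bcd$. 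A justification that never invokes modularity therefore cannot be adequate.

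Even granting $Q = \bigcup_{i \in I} C_i$ with $1 < \card{I} < m$, your final paragraph asserts, without argument, that the existence of such an $I$ ``requires the diamond structure of $[C_*, C]$''; only $m \geq 3$ falls out for free. One still has to prove that \emph{all} pairwise intersections of predecessors, including the $C_k$ with $k \notin I$ about which $Q$ says nothing directly, collapse to $C_*$. This needs a further covering argument, e.g.: for $i, j \in I$ and $k \notin I$, apply quasi-closedness to $(C_i \cap C_k) \cup (C_j \cap C_k)$; since $C_i \cap C_k$ and $C_j \cap C_k$ are lower covers of $C_k$ (lower semimodularity) and $C_k \nsubseteq Q$, their join cannot be $C_k$, so they coincide, and comparing this common meet with the lower cover $C_i \cap C_j$ of $C_i$ forces all pairwise meets to be equal, hence equal to $C_*$. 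Nothing of this kind appears in your proposal, so the necessity direction (essential implies diamond, and every proper quasi-closed spanning set is such a union) remains unproved. Note also that the paper itself gives no proof of this proposition -- it cites Wild's Proposition 4 -- so the assessment here concerns only whether your argument closes these directions, and as written it does not.
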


The lattice $(\cs, \subseteq)$ is \emph{geometric} if it is atomistic---i.e., every join-irreducible is an atom---and upper-semimodular.
If $(\cs, \subseteq)$ is geometric, the associated closure space $(\U, \cl)$ has the \emph{exchange property}: for every $C \in \cs$ and every $x, y \notin C$, $x \in \cl(C \cup y)$ implies $y \in \cl(C \cup x)$.
There is a well-known one-to-one correspondence between (finite) geometric lattices and matroids.
We briefly recall the link between the two structure, and refer to \cite{oxley2006matroid} for a detailed introduction to the topic.
If $(\cs, \subseteq)$ is a geometric lattice over $\U$, then, $\cc{M} = (\U, \cc{I})$ where $\cc{I} = \{I \st I \subseteq \U \text{ is a minimal spanning set of } \cl(I)\}$ is the matroid associated to $(\cs, \subseteq)$.
The bases of $\cc{M}$ are the inclusion-wise maximal sets of $\cc{I}$, i.e., the minimal spanning sets of $\U$ in $(\cs, \subseteq)$.
We denote by $\cc{B}$ the family of bases of $\cc{M}$.
It is well-known that all bases of $\cc{M}$ have the same size and that they are subject to the base exchange axiom: for every distinct $B_1, B_2 \in \cc{B}$ and any $x \in B_1 \setminus B_2$, there exists $y \in B_2 \setminus B_1$ such that $(B_1 \setminus \{x\}) \cup \{y\} \in \cc{B}$.
A circuit is an inclusion-wise minimal set $K \subseteq \U$ such that $K \nsubseteq B$ for any base $B$.
Let us call $\cc{K}$ the family of circuits of $\cc{M}$.
Circuits are closely connected to minimal generators.
Namely, for $x \in \U$, the minimal generators of $x$ are precisely the sets $K \setminus x$ where $K$ is a circuit containing $x$, that is, $\gen(x) = \{K \setminus x : K \in \cc{K}, x \in K\}$.
Note that since $(\cs, \subseteq)$ is atomistic, $\gen(x) = \gen_D(x)$ for any $x \in \U$.

Finally, $(\cs, \subseteq)$ is \emph{meet-distributive} if it is lower-semimodular and join-semidistributive, and \emph{join-distributive} if it enjoys the dual laws.
Meet-distributive lattices are also known as convex geometries.
In this case the corresponding closure operator $\cl$ has the \emph{anti-exchange property}, dual to the exchange property: for every $C \in \cs$ and every distinct $x, y \notin C$, $x \in \cl(C \cup y)$ implies $y \notin \cl(C \cup x)$.

\paragraph{Implicational bases (IBs).} 
We refer to \cite{bertet2018lattices,wild2017joy} for a more detailed introduction to the topic.
An \emph{implication} over $\U$ is a statement $A \imp X$, with $A, X \subseteq \U$.
In $A \imp X$, $A$ is the \emph{premise} and $X$ the $\emph{conclusion}$.
An implication $A \imp x$, $x \in \U$, is a \emph{(right) unit implication}.
If both $A$ and $X$ are singletons, the implication is \emph{binary}.
An \emph{implicational base} (IB) is a pair $(\U, \is)$ where $\is$ is a set of implications over $\U$.
An IB is unit (resp.~binary) if all its implications are unit (resp.~binary).
The \emph{aggregation} or \emph{aggregated form} of a unit IB $(\U, \is)$ is obtained by replacing all implications with the same premise by a single implication with unified conclusions.

An IB $(S, \is)$ induces a closure operator $\is(\cdot)$, hence a closure space,  where a subset $C$ of $\U$ is closed if $A \subseteq C$ implies $X \subseteq C$ for all $A \imp X \in \is$.
Usually, each closure space $(\U, \cl)$---each lattice $(\cs, \subseteq)$---can be represented by several \emph{equivalent} IBs.
An implication $A \imp X$ \emph{holds} in $(\U, \cl)$ if for every $C \in \cs$, $A \subseteq C$ implies $X \subseteq C$.
An implication $A \imp X$ holds in $(\U, \cl)$ if and only if $X \subseteq \cl(A)$.
If $(\U, \is)$ is an IB with closure space $(\U, \cl)$, we say that an implication $A \imp X$ \emph{holds} in, or \emph{follows from} $(\U, \is)$ if it holds in $(\U, \cl)$.
Thus, two IBs $(\U, \is_1)$, $(\U, \is_2)$ are equivalent if every implication of $\is_1$ follows from $(\U, \is_2)$ and vice versa.
The binary part of a closure space $(\U, \cl)$ can be represented by the binary IB $(\U, \is^b)$ gathering all valid (non-trivial) binary implications, that is $\is^b = \{a \imp x \st x \in \cl(a), x \neq a\}$.
An IB $(\U, \is)$ is \emph{valid} for $(\U, \cl)$ if the closure space associated to $(\U, \is)$ is indeed $(\U, \cl)$.
Among the possible IBs of $(\U, \cl)$, and besides the $E$-base, we will mention three:
\begin{itemize}
\item the \emph{canonical} or \emph{Duquenne-Guigues base} $(\U, \is_{DG})$ \cite{guigues1986familles}.
It is defined from pseudo-closed sets as follows: 
\[ \is_{DG} = \{P \imp \cl(P) \setminus P \st P \text{ is pseudo-closed}\}. \]
This IB is \emph{minimum}, it has the least number of implications among all other IBs.
The following theorem is well-known and will be useful in the next sections.
\begin{theorem}[e.g.~Theorem 5 in \cite{wild1994theory}] \label{thm:canonical}
Let $(\U, \cl)$ be a closure space and let $(\U, \is)$ be an IB for $(\U, \cl)$. 
Then, for any pseudo-closed set $P$ of $(\U, \cl)$, $\is$ contains an implication $A \imp B$ such that $A \subseteq P$ and $\cl(A) = \cl(P)$.
\end{theorem}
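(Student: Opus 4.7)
The plan is to exploit the defining property of pseudo-closed sets, namely that they are quasi-closed but not closed, together with the fact that $\is$ generates $\cl$ via iterated firing of implications.

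First I would note that since $P$ is pseudo-closed, $P$ is not closed, so $P \subsetneq \cl(P)$. Because $(\U, \is)$ is valid for $(\U, \cl)$, the $\is$-closure of $P$ coincides with $\cl(P)$, so $P$ itself cannot be $\is$-closed. Consequently there must exist at least one implication $A \imp B$ in $\is$ such that $A \subseteq P$ but $B \not\subseteq P$; otherwise iterated firing of implications on $P$ would never leave $P$, contradicting $P \neq \cl(P)$.

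Next, fix such an implication $A \imp B \in \is$. I would show that $\cl(A) = \cl(P)$. Since $A \subseteq P$, monotonicity gives $\cl(A) \subseteq \cl(P)$. Suppose for contradiction that this inclusion is strict, i.e., $\cl(A) \subsetneq \cl(P)$. Here is where the quasi-closed property of $P$ enters: because $A \subseteq P$ and $\cl(A) \subsetneq \cl(P)$, the definition of quasi-closedness forces $\cl(A) \subseteq P$. But $A \imp B$ is a valid implication of $(\U, \cl)$, so $B \subseteq \cl(A) \subseteq P$, contradicting our choice of the implication as one with $B \not\subseteq P$. Hence $\cl(A) = \cl(P)$, which is exactly the conclusion we wanted.

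There is no real obstacle here; the whole argument turns on the single observation that quasi-closedness of $P$ converts a strict inclusion $\cl(A) \subsetneq \cl(P)$ into $\cl(A) \subseteq P$, which then clashes with the existence of the ``firing'' implication. I would present the two steps (existence of a firing implication, then the quasi-closedness dichotomy) as the two ingredients of the proof, both of which are short.
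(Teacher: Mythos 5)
Your proof is correct: the existence of a ``firing'' implication $A \imp B$ with $A \subseteq P$, $B \not\subseteq P$ follows from $P$ not being $\is$-closed, and quasi-closedness of the pseudo-closed set $P$ rules out $\cl(A) \subsetneq \cl(P)$ exactly as you argue. The paper does not reprove this statement (it cites Wild's Theorem~5), and your argument is the standard one used there, so there is nothing to add.
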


\item the \emph{canonical direct base} $(\U, \is_{cd})$ \cite{bertet2010multiple}.
It relies on minimal generators of each element of $\U$.
More precisely, 
\[ \is_{cd} = \{A \imp x \st x \in \U, A \in \gen(x)\}.\]

\item the \emph{$D$-base} $(\U, \is_D)$ \cite{adaricheva2013ordered}.
It is defined from $D$-generators and binary implications representing the binary part of $(\U, \cl)$: 
\[ \is_D = \is^b \cup \{A \imp x \st x \in \U, A \in \gen_D(x)\}. \]
Remark that $\is_D \subseteq \is_{cd}$.
\end{itemize}

Note that for any $C \in \cs$, the restriction of $\is_{DG}$ (resp.\! $\is_{cd}$, $\is_D$) to the implications whose premise and conclusion are subsets of $C$ constitutes the canonical base (resp. canonical direct, $D$-base) of the closure system $(C, \idl C)$.

\section{The \mtt{$E$}{E}-base and some of its properties} \label{sec:E-base}

In this section, we first introduce the $E$-base in the line of seminal works on these objects \cite{adaricheva2013ordered,freese1995free}.
Then, we exhibit further properties of the $E$-base and connect it with other IBs, especially the canonical base.

Let us fix a (standard) closure space $(\U, \cl)$ with lattice $(\cs, \subseteq)$.
We start with the definitions of $E$-generators and $E$-base.
These definitions translate those of \cite{adaricheva2013ordered,freese1995free} within our terms.

\begin{definition}[$E$-generator] \label{def:E-generator}
Let $(\U, \cl)$ be a closure space and let $x \in \U$.
A subset $A$ of $\U$ is a \emph{$E$-generator} of $x$ if $A$ is a $D$-generator of $x$ and $\cl(A)$ is inclusion-wise minimal in $\{\cl(B) \st B \in \gen_D(x)\}$.
We denote by $\gen_E(x)$ the family of $E$-generators of $x$. We have
\[
\gen_E(x) = \{A \st A \in \gen_D(x) \text{ and } \cl(A) \in \min_{\subseteq}\{\cl(B) \st B \in \gen_D(x)\}\}.
\]
\end{definition}

In general $\gen_E(x)$ may be empty, much as $\gen_D(x)$.
In fact, this will happen precisely if $x$ is prime in $(\cs, \subseteq)$.
Moreover, as pointed out in \cite{wild2017joy}, $E$-generators share common points with critical generators of convex geometries \cite{korte2012greedoids}.
Convex geometries form a well-known class of closure spaces.
For a convex geometry, a critical generator of some $x$ is a minimal generator $A$ of $x$ whose closure is minimal among the closures of other minimal generators of $x$.
Thus, $E$-generators and critical circuits both asks for minimality in the closure lattice.
Yet, the two objects differ in that $E$-generators rely on $\cl^b$ while critical generators do not.

\begin{definition}[$E$-base] \label{def:E-base}
The \emph{$E$-base} of a closure space $(\U, \cl)$ is the IB $(\U, \is_E)$ where
\[ 
\is_E = \is^b \cup \{A \imp x \st x \in \U, A \in \gen_E(x)\}.
\]
\end{definition}

By definition, the $E$-base is a subset of the $D$-base.
Henceforth, it is also a subset of the canonical direct base.
Intuitively, the $E$-base is obtained from the $D$-base by ruling out implications spanning closed sets ``too high'' in the closure lattice with respect to a fixed element $x$.
More formally, one obtains the $E$-base from the $D$-base by repeatedly applying the following operation: if $A_1 \imp x, A_2 \imp x$ lie in the non-binary part of $\is_D$ and $\cl(A_1) \subset \cl(A_2)$, then remove $A_2 \imp x$.
\begin{example}[see Figure 5.5, p.~111 in \cite{freese1995free}] \label{ex:carpet}
Let $\U = \{a, b, c, d, e, f, g\}$ and consider the closure space $(\U, \cl)$ associated to the semidistributive lattice of Figure \ref{fig:carpet}.
We construct the $E$-base by identifying, for each $x$, $\gen(x)$, $\gen_D(x)$ and finally $\gen_E(x)$. 
\begin{itemize}
    \item $\gen(a) = \{c, e, f, g\}$. 
    Since $\gen_D(x)$ takes into account only non-singleton minimal generators, we have $\gen_D(a) = \emptyset$ and hence $\gen_E(a) = \emptyset$ as well ($a$ is prime). 
    
    \item $\gen(b) = \{d\}$.
    As for $b$, we have $\gen_E(b) = \emptyset$ ($b$ is prime too).
    
    \item $\gen(c) = \{\agset{ad}, e, f, g\}$.
    Since $\agset{ad}$ is the unique non-singleton minimal generator of $c$, it must be at the same time a $D$-generator and a $E$-generator for it is $\cl^b$-minimal and minimal with respect to $\cl$.
    Thus, $\gen_E(c) = \{\agset{ad}\}$.
    
    \item $\gen(d) = \{\agset{be}, \agset{bg}\}$.
    We have $\cl^b(\agset{bg}) = \agset{abcfg}$ and $\cl^b(\agset{be}) = \agset{abce}$.
    Thus both are $\cl^b$-minimal and $\gen_D(d) = \{\agset{be}, \agset{bg}\}$.
    Now to identify $\gen_E(d)$ we need to compare the closures of the $D$-generators of $d$.
    We have $\cl(\agset{bg}) = \agset{abcdfg} \subset \U = \cl(\agset{be})$. 
    Ruling out $be$, we get $\gen_E(d) = \{\agset{bg}\}$.
    
    \item $\gen(e) = \emptyset$ so that $\gen_E(e) = \emptyset$. 
    
    \item $\gen(f) = \{\agset{bc}, \agset{be}, \agset{cd}, \agset{ad}, \agset{de}, g\}$.
    Among them, we have $\cl^b(\agset{bc}) = \agset{abc} \subset \agset{abce} = \cl^b(\agset{be})$ and $\cl^b(\agset{ad}) = \agset{abd} \subset \agset{abcd} = \cl^b(\agset{cd}) \subset \agset{abcde} = \cl^b(\agset{de})$.
    Keeping only the $\cl^b$-minimal generators leads to $\gen_D(f) = \{\agset{ad}, \agset{bc}\}$.
    We have $\cl(\agset{bc}) = \agset{abcf} \subset \agset{abcdf} = \cl(\agset{ad})$ which leads to withdraw $\agset{ad}$.
    We thus have $\gen_E(f) = \{\agset{bc}\}$.
    
    \item $\gen(g) = \{\agset{be}, \agset{de}, \agset{ef}\}$.
    We have $\cl^b(\agset{be}) = \agset{abce} \subset \agset{abcde} =\cl^b(\agset{de})$. 
    On the other hand $\cl^b(\agset{ef}) = \agset{acef}$.
    We obtain $\gen_D(g) = \{\agset{be}, \agset{ef}\}$.
    Now $\cl(\agset{ef}) = \agset{acefg} \subset \agset{abcdefg} = \cl(\agset{be})$.
    Henceforth, $\gen_E(g) = \{\agset{ef}\}$.
    
\end{itemize}
Putting all the parts together, we can build the $E$-base $(\U, \is_E)$ of $(\U, \cl)$.
We separate the implications into groups as follows: first we present (aggregated) binary implications altogether, and then we split the implications according to the closure of their premises.
We will frequently use this decomposition all along the paper as it will ease the comparison between different IBs.
\begin{align*}
\is_E & = \{\agset{g \imp acf}, \agset{f \imp ac}, \agset{e \imp ac}, \agset{d \imp b}, \agset{c \imp a} \} & (\text{binary}) \\ 
& \cup \{\agset{bc \imp f}\} & (\agset{abcf}) \\ 
& \cup \{\agset{ad \imp c}\} & (\agset{abcdf}) \\ 
& \cup \{\agset{ef \imp g}\} & (\agset{acefg}) \\
& \cup \{\agset{bg \imp d}\} & (\agset{abcdfg}) 
\end{align*}

\begin{figure}[ht!]
    \centering
    \includegraphics[scale=\FIGcarpet]{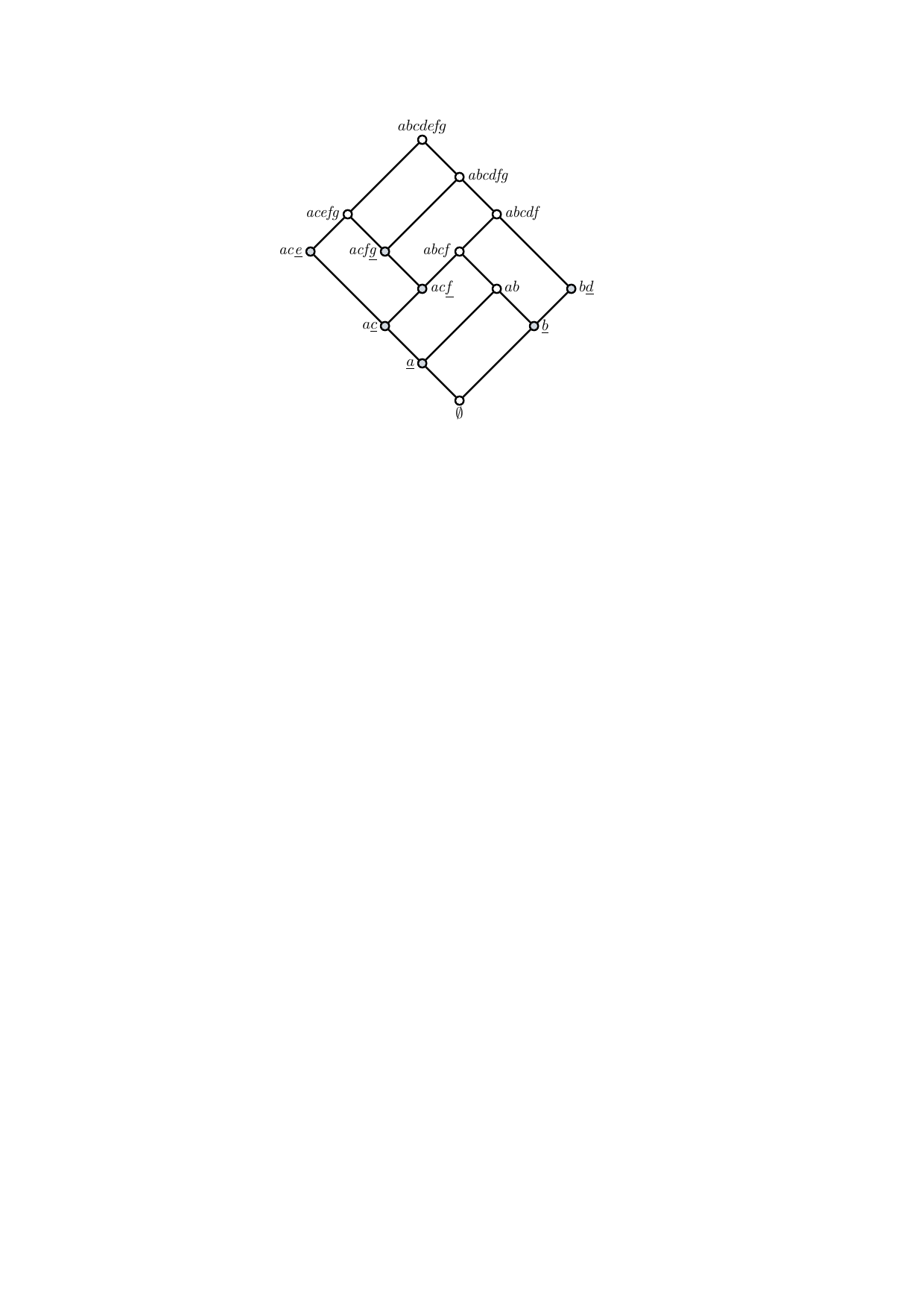}
    \caption{The closure lattice of Example \ref{ex:carpet}. A shaded dot indicates a join-irreducible closed set.
    The element of which it is the closure is underlined in the label, e.g., $\cl(f) = \agset{acf}$.
    The lattice is semidistributive.}
    \label{fig:carpet}
\end{figure}

\end{example}
Unlike the $D$-base though, and as mentioned in the introduction, the $E$-base of a closure space does not always constitute a valid IB.
We give below three increasingly demanding examples that illustrate how the $E$-base can fail to describe a closure space by comparing it to the canonical base. 
Indeed, the $E$-base $(\U, \is_E)$ of $(\U, \cl)$ is not valid if and only if it its not equivalent to the canonical base $(\U, \is_{DG})$ of $(\U, \cl)$, that is, if there exists some implication $P \imp C \setminus P$ in $\is_{DG}$ that does not follow from $\is_E$.
This prevents some spanning sets of the essential set $C$, $P$ in particular, to be correctly mapped on $C$ by the $E$-base.
In the rest of the paper, we will say that such pseudo-closed and essential sets are \emph{faulty} (w.r.t.\ the $E$-base).
An essential set is faulty if and only if it has a faulty pseudo-closed spanning set, and $(\U, \is_E)$ is valid if and only if there is no faulty essential sets in $(\U, \cl)$.
Remark that, by definition of the $E$-base, a ji essential set cannot be faulty.
Hence, when dealing with faulty or non-faulty essential set, we always assume that the corresponding essential set is non-ji.

\begin{example}[Example 27 in \cite{adaricheva2013ordered}] \label{ex:leaf}
Let $\U = \{a, b, c, d\}$ and consider the closure space $(\U, \cl)$ whose closure lattice is given in Figure \ref{fig:leaf}.
It is a convex geometry, and hence join-semidistributive.
However, is it not meet-semidistributive.
We have:

\begin{center}
\begin{minipage}{0.45\textwidth}
\begin{align*}
\is_{DG} & = \{\agset{ac \imp b}\} & (\agset{abc}) \\
& \cup \{\agset{bd \imp c}\} & (\agset{bcd}) \\
& \cup \{\agset{ad \imp bc}\} & (\agset{abcd})
\end{align*}
\end{minipage}
\begin{minipage}{0.45\textwidth}
\begin{align*}
\is_{E} & = \{\agset{ac \imp b}\} & (\agset{abc}) \\
& \cup \{\agset{bd \imp c}\} & (\agset{bcd}) \\
\end{align*}
\end{minipage}
\end{center}

Note that $\is_{DG}$ coincides with the (aggregated) $D$-base.
The implication $\agset{ad \imp bc}$ does not belong to the (aggregated) $E$-base of $(\U, \cl)$.
Indeed, for $\agset{ad \imp c}$, we have $\agset{bd \imp c}$ with $\cl(\agset{bd}) = \agset{bcd} \subset \agset{abcd} = \cl(\agset{ad})$.
Similarly for $\agset{ad \imp b}$, we have $\agset{ac \imp b}$ in $\is$ with $\cl(\agset{ac}) = \agset{abc} \subset \agset{abcd} = \cl(\agset{abcd})$.
It follows that no $E$-generator, hence no premise of the $E$-base, spans the essential set $\agset{abcd}$.
This makes the pseudo-closed set $ad$ and the essential set $\agset{abcd}$ faulty and hence the $E$-base non valid.
\begin{figure}[ht!]
    \centering
    \includegraphics[scale=\FIGleaf]{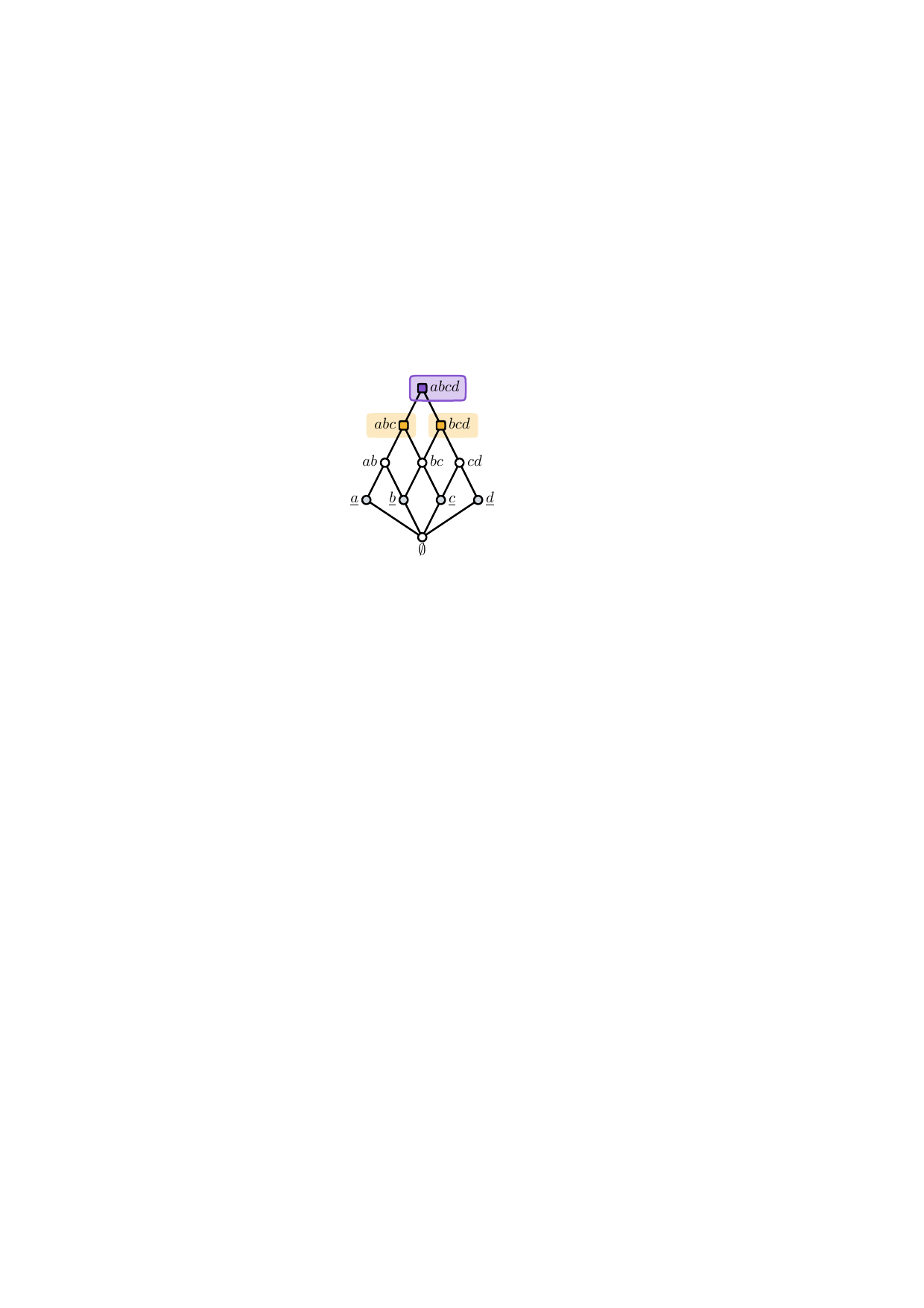}
    
    \includegraphics[page=5]{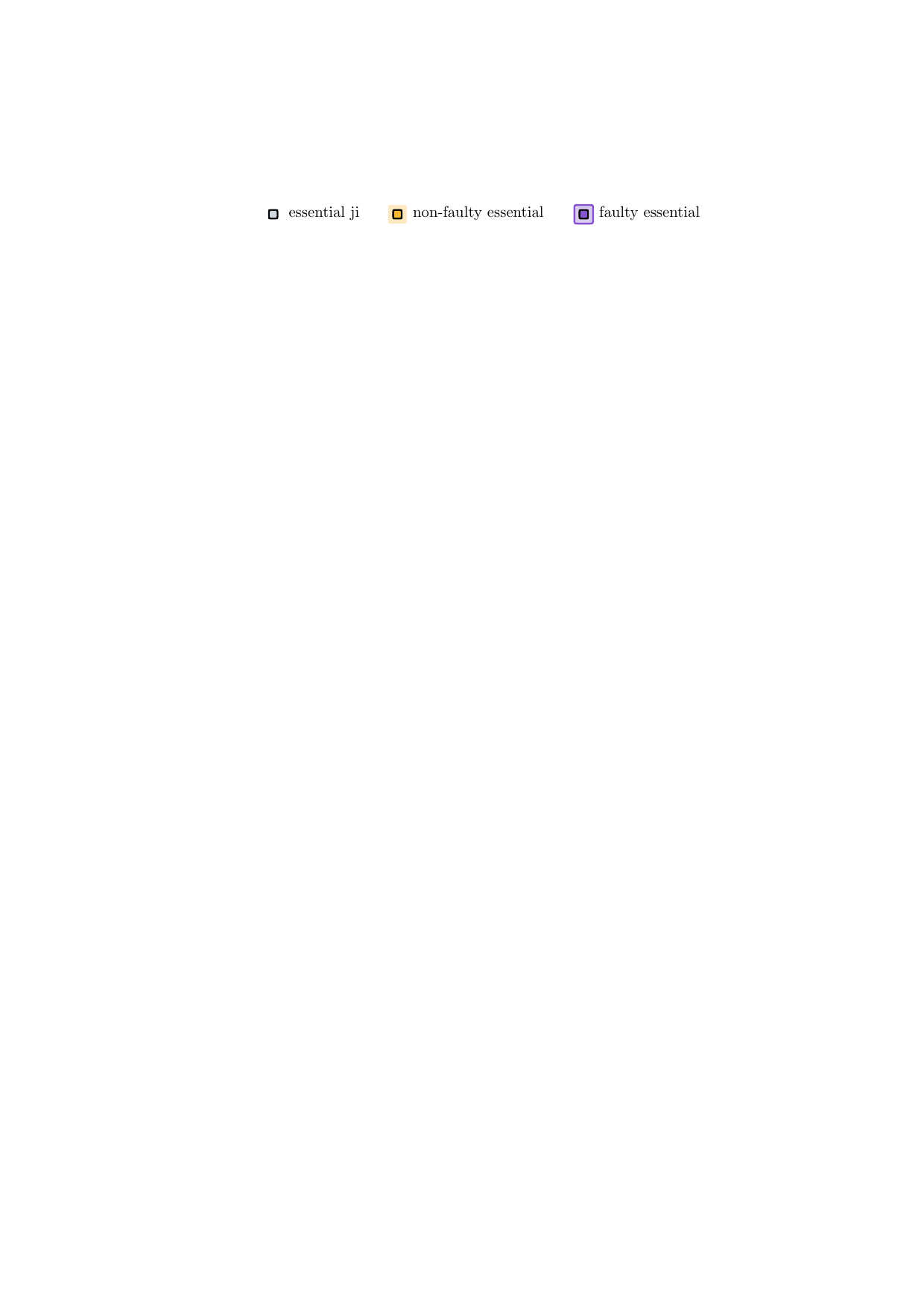}
    \caption{The convex geometry of Example \ref{ex:leaf}. 
    Square dots are the essential sets: $\agset{abc}$, $\agset{bcd}$ and $\agset{abcd}$.
    Among them, $\agset{abc}$ and $\agset{bcd}$ (highlighted in yellow) are non-faulty.
    On the other hand, $\agset{abcd}$ (circled in purple) is faulty since it is not spanned by any $E$-generator.}
    \label{fig:leaf}
\end{figure}
\end{example}

The previous example suggests that the $E$-base of a closure space may not be valid because some essential set is not spanned by any $E$-generator.
Yet, the next example shows that even if each essential set is indeed spanned by some $E$-generator, the $E$-base may not be valid.

\begin{example} \label{ex:S7-gluing}
Consider the closure space $(\U, \cl)$ corresponding to the lattice of Figure \ref{fig:S7-gluing}.
Below, we give the associated canonical and $E$-base:

\begin{center}
\begin{minipage}{.45\textwidth}
\begin{align*}
\is_{DG} & =  \{\agset{d \imp c}, \agset{e \imp c}\} & (\text{binary})\\ 
& \cup \{\agset{ac \imp b}, \agset{bc \imp a}\} & (abc) \\ 
& \cup \{\agset{cde \imp ab}, \agset{abce \imp d}\} & (abcde) \\
\end{align*}
\end{minipage}
\begin{minipage}{.45\textwidth}
\begin{align*}
\is_E & = \{\agset{d \imp c}, \agset{e \imp c}\} & \hfill (\text{binary}) \\ 
& \cup \{\agset{ac \imp b},\agset{bc \imp a}\} & \hfill (abc) \\ 
& \cup \{\agset{ae \imp d}, \agset{be \imp d}\} & \hfill (abcde) \\
\end{align*}
\end{minipage}
\end{center}

The essential sets are $\agset{cd}$, $\agset{ce}$, $\agset{abc}$, and $\agset{abcde}$.
They are all spanned by some $E$-generator. 
The pseudo-closed set $\agset{cde}$ does not include any $E$-generator as shown in the box of Figure \ref{fig:S7-gluing}.
Hence, $(\U, \is_E)$ is not valid due to Theorem~\ref{thm:canonical}.
In fact, $\agset{cde}$ is even closed w.r.t.~$(\U, \is_E)$ , which makes $\agset{abcde}$ faulty.

\begin{figure}[ht!]
\centering
\includegraphics[width=\FIGgluing\textwidth]{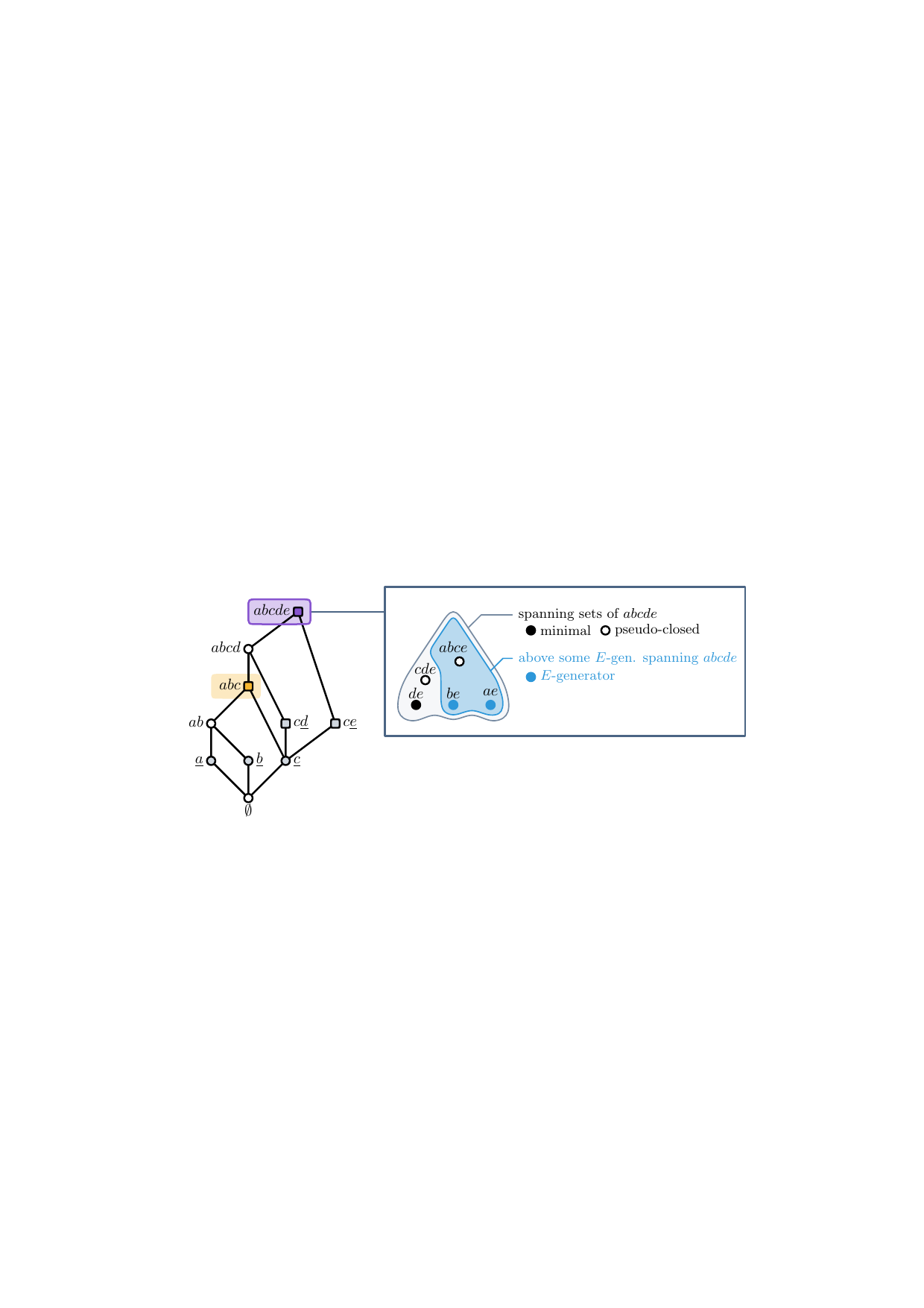}
\includegraphics[width=\textwidth, page=2]{figures/captions.pdf}
\caption{The closure lattice of Example \ref{ex:S7-gluing}.  
The essential sets are $\agset{abc}$, $\agset{cd}$, $\agset{ce}$ and $\agset{abcde}$ (square nodes).
All are spanned by some $E$-generator.  
Yet, the pseudo-closed set $\agset{cde}$ spanning $\agset{abcde}$ does not include any $E$-generator, while $\agset{abce}$ includes $\agset{ae}$ and $\agset{be}$.
This is shown in the box on the right picturing the spanning sets of $\agset{abcde}$.
}
\label{fig:S7-gluing}
\end{figure}

\end{example}

In the previous example, the $E$-base fails to describe the closure space because one of the pseudo-closed sets is not subsumed by any $E$-generator spanning the same essential set.
However, even if the $E$-base satisfies this condition, it does not need to be valid.

\begin{example} \label{ex:S7-relaxed}
Consider the closure space $(\U, \cl)$ associated to the lattice of Figure \ref{fig:S7-relaxed}.
We have:

\begin{center}
\begin{minipage}{0.47\textwidth}
\begin{align*}
\is_{DG} & = \{\agset{d \imp c}, \agset{f \imp ce}, \agset{e \imp c}\} & (\text{binary}) \\
& \cup \{\agset{ac \imp bd}, \agset{bc \imp ad} \} & (\agset{abcd}) \\ 
& \cup \{\agset{cde \imp abf} \} & (\agset{abcdef})
\end{align*}
\end{minipage}
\begin{minipage}{0.47\textwidth}
\begin{align*}
\is_{E} & = \{\agset{d \imp c}, \agset{f \imp ce}, \agset{e \imp c}\} & (\text{binary}) \\
& \cup \{\agset{ac \imp bd}, \agset{bc \imp ad}\} & (\agset{abcd}) \\ 
& \cup \{\agset{ae \imp f}, \agset{be \imp f}, \agset{de \imp f}\} & (\agset{abcdef})
\end{align*}
\end{minipage}
\end{center}

In this case, the pseudo-closed set $cde$ indeed includes the $E$-generator $de$ subsuming the same essential set $\agset{abcdef}$.
However, the closure of $de$ in $\is_E$ does not reach $a$ and $b$, thus making $\agset{cde}$ and $\agset{abcdef}$ faulty.
\begin{figure}[ht!]
    \centering
    \includegraphics[scale=\FIGrelaxed]{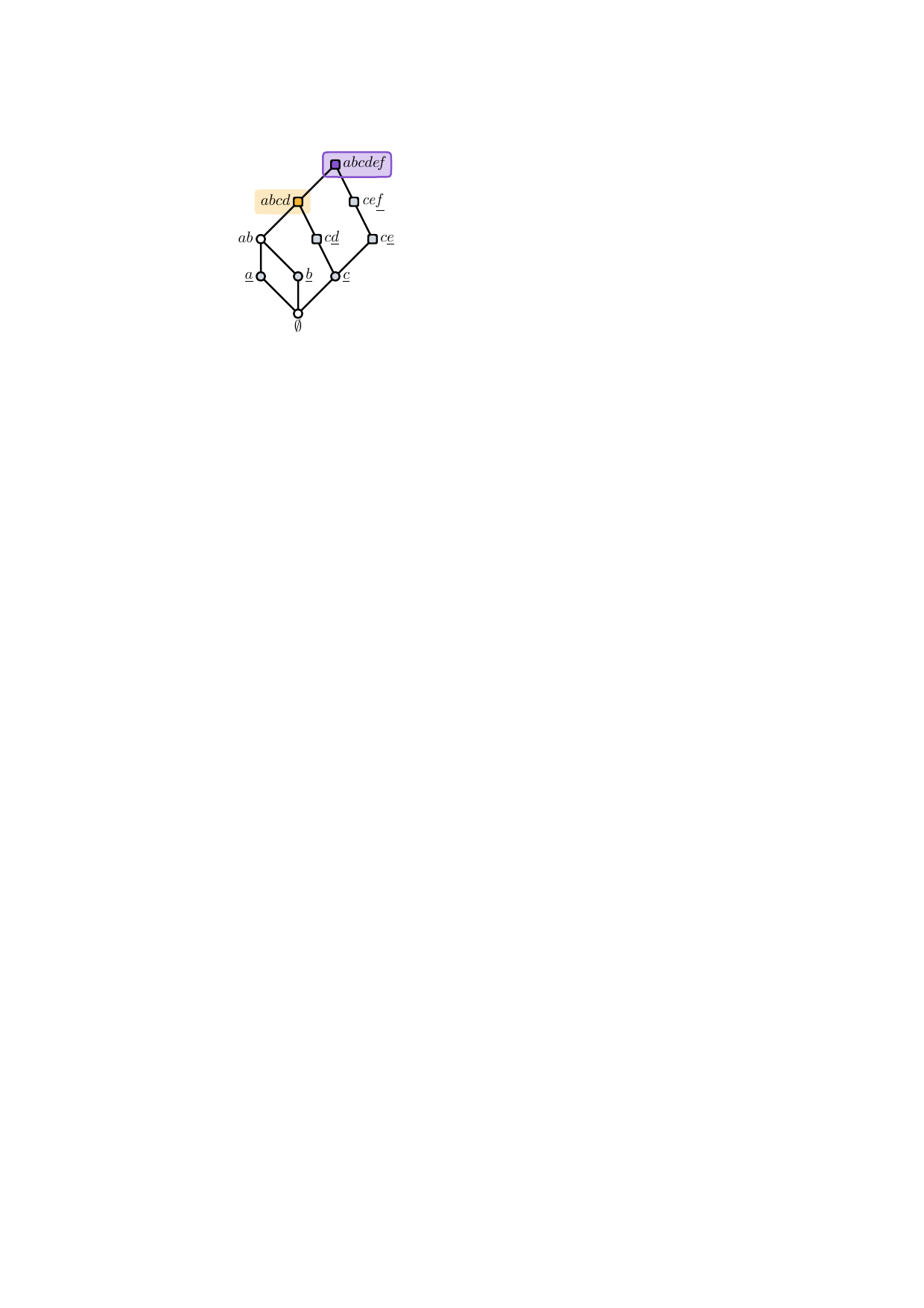}
    \includegraphics[width=\textwidth, page=2]{figures/captions.pdf}
    \caption{The lattice of Example \ref{ex:S7-relaxed} where $\agset{abcdef}$ is faulty and $\agset{abcd}$ is not.
    Even though the pseudo-closed set $\agset{cde}$ is subsumed by the $E$-generator $\agset{de}$, it remains faulty.}
    \label{fig:S7-relaxed}
\end{figure}
\end{example}

Understanding what closure spaces have valid $E$-base is an intriguing problem, and constitutes the main motivation for our contribution.
Before investigating this question, we give a characterization of $E$-generators in Lemma \ref{lem:E-generator}.
It comes from the observation that if $A \in \gen_E(x)$, any closed set $C \subset \cl(A)$ generating $x$ can only contain singleton minimal generators of $x$.
This makes $x$ prime in the lattice $(\idl C, \subseteq)$.

\begin{lemma} \label{lem:E-generator}
A subset $A$ of $\U$ is a $E$-generator of $x$ if and only if the following conditions hold:
\begin{enumerate}[(1)]
    \item $A$ is a $\cl^b$-minimal spanning set of $\cl(A)$
    \item $A$ is a non-trivial $\cl^b$-generator of $x$
    \item for every $C \in \cs$ such that $C \subset \cl(A)$, $x \in C$ implies that $x$ is prime in $(\idl C, \subseteq)$.
\end{enumerate}
\end{lemma}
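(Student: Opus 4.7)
The plan is to prove both directions separately by unfolding the definitions of $D$- and $E$-generator and repeatedly using the basic inclusions $X \subseteq \cl^b(X) \subseteq \cl(X)$, together with the characterization from the preliminaries that $x$ is prime in $(\idl C, \subseteq)$ precisely when every minimal generator $Y \in \gen(x)$ with $Y \subseteq C$ is a singleton.

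For the forward direction, assume $A \in \gen_E(x)$. Condition~(2) is immediate from the definition of $D$-generator. For~(1), any proper $\cl^b$-refinement $B$ of $A$ with $\cl(B) = \cl(A)$ would satisfy $x \in \cl(B)$, contradicting $A \in \gen_D(x)$. For~(3), suppose towards a contradiction there is a closed $C \subsetneq \cl(A)$ with $x \in C$ for which $x$ is not prime in $(\idl C, \subseteq)$, and pick a non-singleton $Y \in \gen(x)$ with $Y \subseteq C$. Minimality of $Y$ forbids any $y \in Y$ from singly generating $x$, so $Y$ is a $\cl^b$-generator of $x$. Iteratively replace the current $\cl^b$-generator by a proper $\cl^b$-refinement that still generates $x$ whenever one exists; since $\cl^b$-images strictly decrease in a finite ground set, the process terminates at some $B \in \gen_D(x)$. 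At each step the refinement $Y_i$ satisfies $Y_i \subseteq \cl^b(Y_i) \subseteq \cl^b(Y) \subseteq \cl(Y) \subseteq C$, so $B \subseteq C$ and hence $\cl(B) \subseteq C \subsetneq \cl(A)$, contradicting the minimality of $\cl(A)$ among $\{\cl(B') \st B' \in \gen_D(x)\}$.

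For the backward direction, suppose $A$ satisfies (1)--(3). To show $A \in \gen_D(x)$, let $B$ be a proper $\cl^b$-refinement of $A$. Since $\cl^b(B) \subseteq \cl^b(A)$ and~(2) excludes $x \in \cl^b(A)$, we have $x \notin \cl^b(B)$. If $x \in \cl(B)$, either $\cl(B) = \cl(A)$, contradicting~(1), or $\cl(B) \subsetneq \cl(A)$, in which case~(3) applied to $C = \cl(B)$ forces $x$ to be prime in $(\idl C, \subseteq)$. Then any inclusion-minimal $Y \subseteq B$ with $x \in \cl(Y)$ lies in $\gen(x)$ with $Y \subseteq C$, so it is a singleton $\{y\}$, giving $x \in \cl(y) \subseteq \cl^b(B)$, a contradiction. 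Hence $A \in \gen_D(x)$. If moreover some $B' \in \gen_D(x)$ satisfies $\cl(B') \subsetneq \cl(A)$, the same primality argument with $C = \cl(B')$ yields $x \in \cl^b(B')$, contradicting $B' \in \gen_D(x)$. Therefore $\cl(A)$ is minimal and $A \in \gen_E(x)$.

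The main obstacle is the refinement argument in~(3): one must produce a $D$-generator of $x$ whose closure remains strictly below $\cl(A)$. This is secured by the observation that at every step $Y_i \subseteq \cl^b(Y_i) \subseteq \cl^b(Y) \subseteq \cl(Y) \subseteq C$, so the iteration never escapes $C$ and its endpoint is a $D$-generator sitting inside $C$. All other steps are short translations between $\cl^b$-generators, $\cl^b$-refinements, and the primality characterization of the preliminaries.
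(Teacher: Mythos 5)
Your proof is correct and follows essentially the same route as the paper's: the backward direction is the same case analysis ($\cl(B)=\cl(A)$ versus $\cl(B)\subsetneq\cl(A)$, with condition (3) plus the singleton-generator characterization of primality forcing $x\in\cl^b(B)$), and the forward direction likewise builds, from a failure of primality below $\cl(A)$, a $D$-generator of $x$ whose closure sits strictly below $\cl(A)$. The only difference is cosmetic: where the paper invokes the join-of-two-closed-sets definition of prime and asserts that the resulting generator "can be chosen $\cl^b$-minimal," you use the minimal-generator characterization of primality and spell out the refinement iteration together with the containment chain keeping it inside $C$ — a welcome explicit justification of the same step.
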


\ifx\arxiv\undefined
\else
\begin{proof}
We start with the only if part.
Let $A \in \gen_E(x)$.
By definition, $A \in \gen_D(x)$ so that conditions (1) and (2) readily holds.
We prove that condition (3) is satisfied.
Let $C \in \cs$ be such that $C \subset \cl(A)$ and $x \in C$.
Assume for contradiction $x$ is not prime in $(\idl C, \subseteq)$.
Then, there exists two closed sets $C_1, C_2 \subset C$ such that $x \notin C_1, C_2$ but $x \in C_1 \jn C_2 = \cl(C_1 \cup C_2)$. 
It follows that $C_1 \cup C_2$ contains a minimal generator $B$ of $x$.
As $C_1, C_2 \in \cs$, $C_1 \cup C_2$ is $\cl^b$-closed.
Henceforth, $\card{B} \geq 2$ must hold.
Moreover $B$ can be chosen $\cl^b$-minimal, in which case $B$ will be a $D$-generator of $x$.
But then, $\cl(B) \subseteq C \subset \cl(A)$, which contradicts $A$ being a $E$-generator of $x$.
Therefore, all conditions (1), (2) and (3) are satisfied.

We move to the if part.
Assume all three conditions hold.
We prove that $A \in \gen_E(x)$.
First, let us show that $A \in \gen_D(x)$.
Let $B$ be any proper $\cl^b$-refinement of $A$.
By condition (1), $\cl(B) \subset \cl(A)$ must hold.
From condition (2), $x \in \cl(A)$ but $x \notin \cl^b(A)$.
As $B$ $\cl^b$-refines $A$, we deduce $x \notin \cl^b(B)$.
On the other hand, $x \in \cl(B)$ would imply that $x$ is prime in $(\idl \cl(B), \subseteq)$ by condition (3).
Since $x \notin B$, this would lead to $x \in \cl^b(B)$, a contradiction.
We deduce that $x \notin \cl(B)$.
This concludes the proof that $A \in \gen_D(x)$.
It remains to show that $A \in \gen_E(x)$.
Let $B$ be any $D$-generator of $x$ distinct from $A$.
Since having $\cl(B) \subset \cl(A)$ would contradict condition (3), we conclude that $\cl(B) \not\subset \cl(A)$.
In other words, $\cl(A)$ is minimal in $\{\cl(B) \st B \in \gen_D(x)\}$ and $A \in \gen_E(x)$ holds as required.
\ifx\arxiv\undefined
\qed
\fi
\end{proof}
\fi

\begin{remark}
Recall from the preliminaries that conditions (1) and (2) in Lemma \ref{lem:pseudo-E} are not equivalent to be a $D$-generator.
While a $D$-generator $A$ of $x$ satisfies these conditions it also needs to be $\cl^b$-minimal in the sense that no proper $\cl^b$-refinement of $A$ generates $x$ too. 
\end{remark}

Condition (3) in the last lemma is equivalent to the condition (3') below:
\begin{itemize}
    \item[(3')] for every $C \in \cs$ such that $C \prec \cl(A)$, $x \in C$ implies that $x$ is prime in $(\idl C, \subseteq)$.
\end{itemize}

In words, Lemma \ref{lem:E-generator} describes by means of $\cl^b$-minimal spanning sets the inclusion-wise minimal closed sets above which $x$ stops being prime, i.e., when it starts admitting non-singleton minimal generators.
This is formally stated in the subsequent corollary.

\begin{corollary} \label{cor:E-spanned}
The family $\{C \st C \in \cs \text{ and } x \text{ is not prime in } (\idl C, \subseteq)\}$ is a filter of $(\cs, \subseteq)$ whose inclusion-minimal members are exactly the closed sets of the $E$-generators of $x$:
\[ 
\{\cl(A) \st A \in \gen_E(x)\} = \min_{\subseteq}\{C \st C \in \cs \text{ and } x \text{ is not prime in } (\idl C, \subseteq)\}.
\]
\end{corollary}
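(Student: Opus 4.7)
The plan is first to verify that the family $\cc{F}_x = \{C \in \cs : x \text{ is not prime in } (\idl C, \subseteq)\}$ is a filter, and then to characterise its minimal elements via Lemma~\ref{lem:E-generator}. The filter property is immediate by upward closure: a pair $C_1, C_2 \in \idl C$ witnessing that $x$ is not prime in $\idl C$ also witnesses non-primality in $\idl C'$ for any $C' \supseteq C$. The bulk of the proof is the equality $\{\cl(A) : A \in \gen_E(x)\} = \min_{\subseteq} \cc{F}_x$, which I would establish by its two inclusions.

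For the forward inclusion $\{\cl(A) : A \in \gen_E(x)\} \subseteq \min \cc{F}_x$, fix $A \in \gen_E(x)$. Condition~(3) of Lemma~\ref{lem:E-generator} already removes every proper closed subset of $\cl(A)$ from $\cc{F}_x$: if $C \subsetneq \cl(A)$ contains $x$ then $x$ is prime in $\idl C$, and otherwise $C \notin \cc{F}_x$ holds vacuously. The task reduces to confirming $\cl(A) \in \cc{F}_x$. This is the delicate step: a naive split $\cl(A) = \cl(a) \jn \cl(A \setminus a)$ can fail for an arbitrary $a \in A$ because $A \setminus a$ may share its $\cl^b$-closure with $A$ and thus still contain $x$ in its closure. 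To circumvent this, I would pick $a^* \in A$ whose closure is maximal (by inclusion) in $\{\cl(a) : a \in A\}$, which exists by finiteness. If we had $\cl(a^*) \subseteq \cl^b(A \setminus a^*)$, then $a^* \in \cl(a')$ for some $a' \in A \setminus a^*$, giving $\cl(a^*) \subsetneq \cl(a')$ (distinct elements of $\U$ have distinct closures) and contradicting the maximality of $\cl(a^*)$. Hence $A \setminus a^*$ is a proper $\cl^b$-refinement of $A$, and the $D$-generator property yields $x \notin \cl(A \setminus a^*)$; combined with $x \notin \cl(a^*) \subseteq \cl^b(A)$ and $\cl(a^*) \jn \cl(A \setminus a^*) = \cl(A)$, this witnesses $x$ not being prime in $\idl \cl(A)$.

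For the reverse inclusion $\min \cc{F}_x \subseteq \{\cl(A) : A \in \gen_E(x)\}$, let $C \in \min \cc{F}_x$ and pick closed $C_1, C_2 \subseteq C$ with $x \in C_1 \jn C_2$ and $x \notin C_1 \cup C_2$. Since $C_1 \cup C_2$ is $\cl^b$-closed as a union of closed sets, any minimal generator $B$ of $x$ contained in $C_1 \cup C_2$ is automatically a $\cl^b$-generator of $x$. The decomposition $\cl(B) = \cl(b) \jn \cl(B \setminus b)$ for any $b \in B$ shows $\cl(B) \in \cc{F}_x$, since the first factor misses $x$ by the $\cl^b$-generator property and the second by the minimality of $B$. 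The minimality of $C$ then forces $\cl(B) = C$. To produce the desired $E$-generator, I would choose $A \subseteq C$ minimising $\cl^b(A)$ among all subsets with $\cl(A) = C$ and $x \notin \cl^b(A)$. By construction $A$ is a $\cl^b$-minimal spanning set of $C$ and a non-trivial $\cl^b$-generator of $x$, while condition~(3) of Lemma~\ref{lem:E-generator} holds automatically since no proper closed subset of $C$ lies in $\cc{F}_x$. The lemma then certifies $A \in \gen_E(x)$ with $\cl(A) = C$, closing the argument.
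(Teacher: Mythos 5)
Your proof is correct and takes the route the paper intends: the corollary is presented as an immediate consequence of Lemma~\ref{lem:E-generator}, and your argument simply fills in the two implicit steps — that $\cl(A)$ itself lies in the filter (via the split $\cl(A)=\cl(a^*)\jn\cl(A\setminus a^*)$ with $a^*$ of maximal closure, using the $D$-generator property), and that an inclusion-minimal member of the filter is the closure of an $E$-generator (extracting a minimal generator from a non-primality witness inside the $\cl^b$-closed set $C_1\cup C_2$, the same device used in the paper's proof of the lemma, and then verifying conditions (1)–(3)). No gaps.
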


Lemma \ref{lem:E-generator} also stresses on the fact that any $\cl^b$-minimal spanning set $A$ of the closure of some $E$-generator of $x$ is also a $E$-generator of $x$, as long as $x \notin \cl^b(A)$.

\begin{corollary} \label{cor:E-min-span-set}
If $C \in \{\cl(B) \st B \in \gen_E(x)\}$, then any $\cl^b$-minimal spanning set $A$ of $C$ satisfying $x \notin \cl^b(A)$ is a $E$-generator of $x$.
\end{corollary}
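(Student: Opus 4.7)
The plan is to verify the three characterizing conditions of Lemma~\ref{lem:E-generator} for $A$, leveraging that they are already known to hold for some $B \in \gen_E(x)$ with $\cl(B) = C$.

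First, I would note that condition (1), namely that $A$ is a $\cl^b$-minimal spanning set of $\cl(A)$, is exactly what is assumed in the statement (with $\cl(A) = C$). Second, for condition (2), I need that $A$ is a non-trivial $\cl^b$-generator of $x$: the hypothesis $x \notin \cl^b(A)$ gives the non-triviality, while $x \in \cl(B) = C = \cl(A)$ (using that $B$ is an $E$-generator of $x$) shows $x \in \cl(A)$.

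The key observation, and the reason the corollary works so cleanly, is that condition (3) of Lemma~\ref{lem:E-generator} depends only on $\cl(A)$, not on $A$ itself. Since $\cl(A) = \cl(B) = C$, and $B$ satisfies condition (3) by virtue of being an $E$-generator of $x$, the same condition is inherited by $A$: for every $C' \in \cs$ with $C' \subset C$, if $x \in C'$ then $x$ is prime in $(\idl C', \subseteq)$. Applying the ``if'' direction of Lemma~\ref{lem:E-generator} then concludes that $A \in \gen_E(x)$.

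No step is particularly subtle here; the whole point is that Lemma~\ref{lem:E-generator} isolates a characterization whose condition (3) is invariant under replacing one $\cl^b$-minimal spanning set of $C$ by another. The only mild care needed is to check that the statement indeed guarantees $\cl(A) = C$ (since $A$ spans $C$), which is implicit in the phrasing ``$\cl^b$-minimal spanning set of $C$''.
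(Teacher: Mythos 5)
Your proposal is correct and follows the same route as the paper's own proof: check conditions (1) and (2) of Lemma~\ref{lem:E-generator} directly from the hypotheses, and observe that condition (3) depends only on $\cl(A) = \cl(B) = C$ and is thus inherited from $B \in \gen_E(x)$.
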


\ifx\arxiv\undefined
\else
\begin{proof}
Since $x \in \cl(A) = C$, $A$ and $x$ readily satisfy conditions (1) and (2) of Lemma \ref{lem:E-generator} by assumption.
Condition (3) follows from $\cl(A) = \cl(B)$ and $B \in \gen_E(x)$.
\ifx\arxiv\undefined
\qed
\fi
\end{proof}
\fi

\begin{figure}[ht!]
    \centering
    \includegraphics[width=\textwidth]{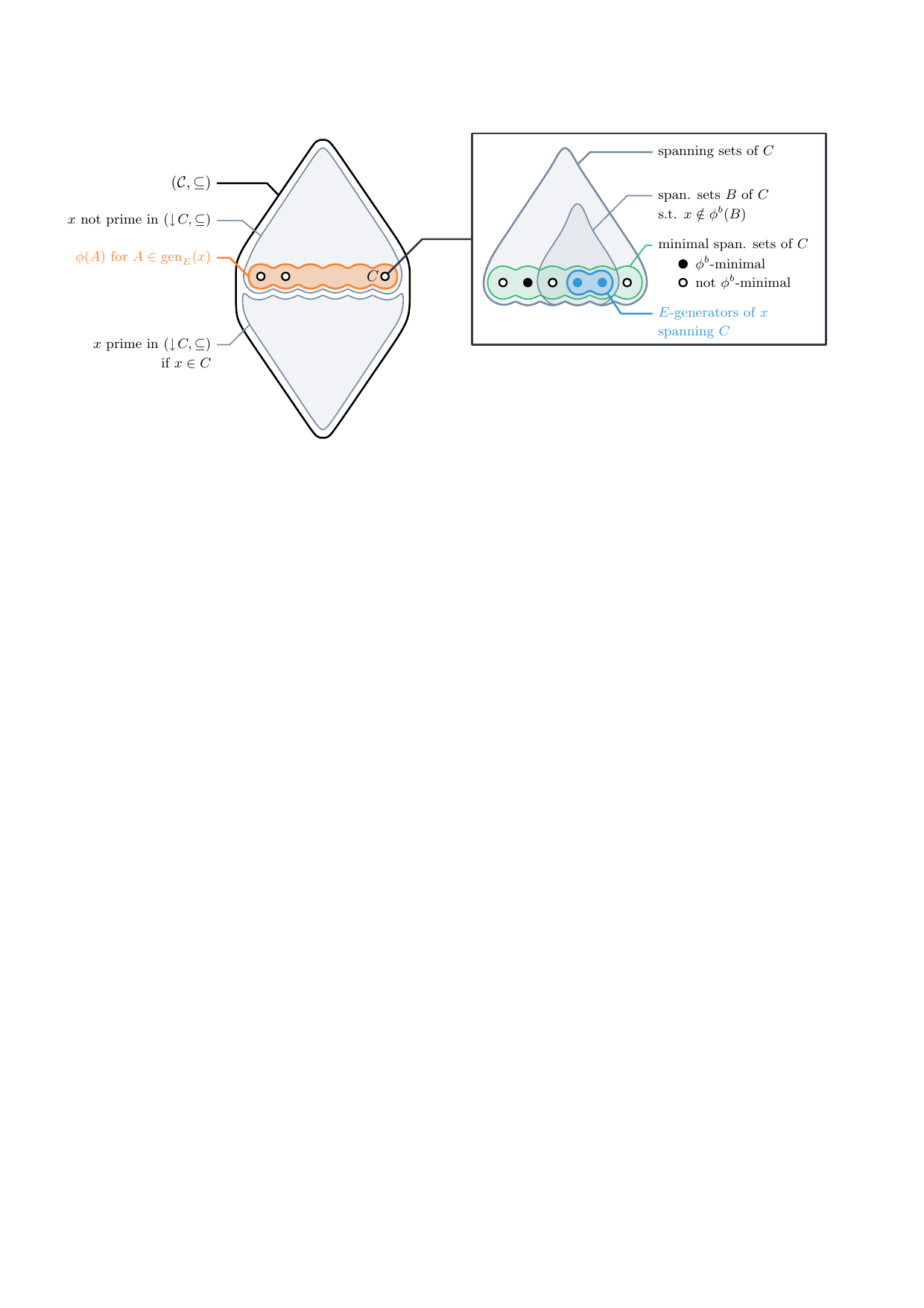}
    \caption{The $E$-generators of $x$ in a closure space $(\U, \cl)$. On the left, the lattice $(\cs, \subseteq)$ associated with $(\U, \cl)$ is partitioned according to Corollary \ref{cor:E-spanned}, i.e., according to the filter of closed sets above which $x$ is not prime. The minimal members of this filter are the closure of the $E$-generators of $x$.
    The box on the right illustrates Corollary \ref{cor:E-min-span-set}: within the spanning sets of $C = \cl(A)$ for some $A \in \gen_E(x)$, the $E$-generators of $x$ are the $\cl^b$-minimal spanning sets $B$ of $C$ s.t.\ $x \notin \cl^b(B)$.}
    \label{fig:E-gen-structure}
\end{figure}

The situation is pictured in Figure \ref{fig:E-gen-structure}.
Following Lemma \ref{lem:E-generator} and Corollary \ref{cor:E-spanned}, for a closed set $C$ and $x \in C$, we say that $x$ is \emph{almost prime} in $(\idl C, \subseteq)$ if $x$ is not prime in $(\idl C, \subseteq)$ but prime in $(\idl C_*, \subseteq)$ for every $C_* \prec C$ such that $x \in C_*$.
We thus obtain a new definition of the $E$-base of a closure space.

\begin{definition}[$E$-base] \label{def:E-base-prime}
Let $(\U, \cl)$ be a closure space with lattice $(\cs, \subseteq)$.
The $E$-base of $(\U, \cl)$ is the IB $(\U, \is_E)$ where 
\[
\begin{split}
\is_E = & \is^b \cup \{A \imp x \st x \in \U,  A \text{ is a $\cl^b$-minimal spanning set of $\cl(A)$} \\
        & \quad\quad \text{s.t.\ } x \notin \cl^b(A) \text{ and $x$ is almost prime in } (\idl \cl(A), \subseteq)\}.
\end{split}
\] 
\end{definition}

Then, we show that $E$-generators and their closure relate to any IB representing the closure space.
To do so, we highlight the relationships between $E$-generators and quasi-closed sets.
The next lemma can be seen as the analogue for $E$-generators in any closure space of the Expansion 16 of Wild \cite{wild2017joy} dedicated to critical generators in convex geometries.

\begin{lemma} \label{lem:E-QC}
Let $C \in \cs$, $x \in C$, and $Q = C \setminus \{y \st x \in \cl^b(y)\}$.
Then $Q$ is a quasi-closed set spanning $C$ if and only if $C = \cl(A)$ for some $A \in \gen_E(x)$.
\end{lemma}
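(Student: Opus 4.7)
The plan is to exploit Corollary~\ref{cor:E-spanned}, which identifies $\{\cl(A) : A \in \gen_E(x)\}$ as the minimal elements of the filter $\cc{F}_x = \{C' \in \cs : x \text{ is not prime in } (\idl C', \subseteq)\}$. A preliminary observation that both directions will use is that, because $\cl^b(y)=\cl(y)$ for singletons, $Q$ can be rewritten as $\{y \in C : x \notin \cl(y)\}$, and moreover $Q$ is automatically $\cl^b$-closed: if $y \in Q$ and $z \in \cl(y)$, then $x \in \cl(z)$ would give $x \in \cl(y)$, a contradiction.

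For the ($\Leftarrow$) direction, I would take $A \in \gen_E(x)$ with $\cl(A)=C$ and first observe that $x \notin \cl^b(A)$ forces $A \subseteq Q$, so $\cl(Q)=C$. To prove quasi-closedness, I would pick $X \subseteq Q$ with $\cl(X) \subsetneq C$ and apply Corollary~\ref{cor:E-spanned}: by the minimality of $C$ in $\cc{F}_x$, $x$ is prime in $(\idl \cl(X),\subseteq)$. If $x$ lay in $\cl(X)$, then $\cl(x) \subseteq \bigvee_{y \in X}\cl(y)$ together with primality would yield some $y \in X \subseteq Q$ with $x \in \cl(y)$, contradicting the very definition of $Q$. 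Hence $x \notin \cl(X)$, and for any $z \in \cl(X)$ one cannot have $x \in \cl(z)$ either, so $z \in Q$ and $\cl(X) \subseteq Q$ as required.

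For the ($\Rightarrow$) direction, I would first check that $C \in \cc{F}_x$: since no $y \in Q$ satisfies $x \in \cl(y)$ yet $Q$ spans $C \ni x$, the set $Q$ must contain a non-singleton minimal generator of $x$, showing that $x$ is not prime in $(\idl C,\subseteq)$. It then remains to show $C$ is minimal in $\cc{F}_x$. Suppose toward contradiction that $C' \subsetneq C$ is closed with $x$ not prime in $(\idl C',\subseteq)$; then $x$ admits a non-singleton minimal generator $B \subseteq C' \subseteq C$. Minimality of $B$ ensures $x \notin \cl(b)$ for every $b \in B$, so $B \subseteq Q$. But $\cl(B) \subseteq C' \subsetneq C = \cl(Q)$, and quasi-closedness of $Q$ forces $\cl(B) \subseteq Q$, contradicting $x \in \cl(B) \setminus Q$. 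Thus $C$ is minimal in $\cc{F}_x$, and Corollary~\ref{cor:E-spanned} delivers an $A \in \gen_E(x)$ with $\cl(A)=C$.

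The main delicate point I expect is the primality step in the ($\Leftarrow$) direction: correctly turning ``$x$ is prime in $(\idl \cl(X),\subseteq)$'' into ``$x \in \cl(X)$ implies $x \in \cl(y)$ for some $y \in X$'', which relies on the equivalence between primality and having only singleton minimal generators in the restricted sublattice. Everything else is a structural bookkeeping exercise built directly on top of Corollary~\ref{cor:E-spanned}.
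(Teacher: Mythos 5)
Your proof is correct, but it takes a somewhat different route from the paper. You pivot everything on Corollary~\ref{cor:E-spanned} (the filter-minimality description of $\{\cl(A) : A \in \gen_E(x)\}$), whereas the paper argues directly from the three conditions of Lemma~\ref{lem:E-generator}: in the only-if direction it explicitly constructs a $\cl^b$-minimal spanning set $A$ of $C$ \emph{inside} $Q$ (using that $Q$ is $\cl^b$-closed and that $C$ is not a point closure) and checks conditions (1)--(3), while you instead verify that $C$ is a minimal member of the filter $\cc{F}_x$ and obtain $A$ abstractly from the corollary. Your version is leaner for the lemma as stated, but the paper's construction yields the extra fact that $Q$ itself contains an $E$-generator spanning $C$ --- information the paper later recovers for Theorem~\ref{thm:E-DG} via Corollary~\ref{cor:E-min-span-set}, so nothing is lost either way. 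Your if-direction is essentially the paper's argument in contrapositive form: where the paper derives a non-singleton minimal generator of $x$ below $\cl(X)$ and contradicts condition (3), you use minimality of $C$ in $\cc{F}_x$ to get primality of $x$ in $(\idl \cl(X), \subseteq)$ and conclude $x \notin \cl(X)$. Two small points you handle correctly but should keep explicit: membership in $\cc{F}_x$ implicitly presupposes $x \in C'$ (your case split ``either $x \notin \cl(X)$ or primality applies'' resolves this), and the step from binary primality to ``$\cl(x) \subseteq \bigvee_{y \in X}\cl(y)$ implies $\cl(x) \subseteq \cl(y)$ for some $y$'' needs the finite-join extension of primality, equivalently the paper's remark that prime elements admit only singleton minimal generators.
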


\ifx\arxiv\undefined
\else
\begin{proof}
We start with the only if part.
Assume $Q = C \setminus \{y \st x \in \cl^b(y)\}$ spans $C$ and is quasi-closed.
Note that since $Q$ spans $C$ and $(\U, \cl)$ is standard, $C \neq \cl(z)$ for any $z \in C$.
We show that $C = \cl(A)$ for some $A \in \gen_E(x)$. 
Let $A$ be a $\cl^b$-minimal spanning set of $C$ included in $Q$.
Since $Q$ is quasi-closed, $\cl^b(z) \subseteq Q$ for each $z \in Q$ as $C \neq \cl(z)$.
Hence, such a $A$ must exist.
We prove $A$ is a $E$-generator of $x$.
Conditions (1) and (2) of Lemma \ref{lem:E-generator} are already satisfied by definition of $A$, $Q$ and $C$.
We argue that condition (3) also holds.
Let $C' \in \cs$ be such that $x \in C'$ and $C' \subset C$.
To prove that $x$ is prime in $(\idl C', \subseteq)$, we need only prove that $C'$ does not include any  non-singleton minimal generator of $x$.
This amounts to show that $C' \setminus \{y \st x \in \cl^b(y)\}$ contains no minimal generator $x$.
We have $C' \setminus \{y \st x \in \cl^b(y)\} \subseteq C \setminus \{y \st x \in \cl^b(y)\} = Q$.
Moreover $C' \subset C$ so that $\cl(C' \setminus \{y \st x \in \cl^b(y)\}) \subseteq C' \subset C$.
Since $Q$ is quasi-closed, we deduce $\cl(C' \setminus \{y \st x \in \cl^b(y)\}) \subseteq Q$.
As $x \notin Q$, we obtain that $C'$ contains no non-singleton minimal generator of $x$.
Therefore $x$ is indeed prime in $(\idl C', \subseteq)$ for any closed set $C' \subset C$ containing $x$.
Condition (3) of Lemma \ref{lem:E-generator} is satisfied, and $A \in \gen_E(x)$ holds as required. 
This ends this part of the proof.

We move to the if part.
Assume $C = \cl(A)$ for some $E$-generator $A$ of $x$.
Let $Q = C \setminus \{y \st x \in \cl^b(y)\}$.
We argue that $Q$ is a quasi-closed set spanning $C$.
Since $A \subseteq Q$, $\cl(Q) = C$ readily holds.
We show that $Q$ is quasi-closed.
Let $X \subseteq Q$ such that $\cl(X) \subset C$. 
We prove that $\cl(X) \subseteq Q$.
Assume for contradiction $\cl(X) \nsubseteq Q$.
Then, there exists $y \in C \setminus Q$ such that $y \in \cl(X)$.
Since $x \in \cl^b(y) = \cl(y)$, we have $x \in \cl(X)$.
Henceforth, $x \in \cl(X)$ and $X$ includes a minimal generator $B$ of $x$.
By definition $B$ is not a singleton.
We deduce that $x$ is not prime in $(\idl \cl(X), \subseteq)$.
Thus condition (3) of Lemma \ref{lem:E-generator} fails, which contradicts $A$ being a $E$-generator of $x$.
Hence $\cl(X) \subseteq Q$ holds, and $Q$ is quasi-closed.
\ifx\arxiv\undefined
\qed
\fi
\end{proof}
\fi

As a consequence, the closure of any $E$-generator must be essential.
On the other hand, not all essential need be spanned by some $E$-generator, as shown by Example \ref{ex:leaf}.
We can now give a theorem that relates $E$-generators, hence the $E$-base, to the canonical base.

\begin{theorem} \label{thm:E-DG}
Let $(\U, \cl)$ be a closure space with canonical base $(\U, \is_{DG})$ and let $x \in \U$.
For all $C \in \{\cl(B) \st B \in \gen_E(x)\}$ there exists an implication $P \imp C \setminus P$ in $\is_{DG}$ and a $E$-generator $A$ of $x$ such that $A \subseteq P$, $x \notin P$, and $\cl(A) = \cl(P) = C$.
\end{theorem}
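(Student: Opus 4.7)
The plan is to use Lemma \ref{lem:E-QC} as a bridge to quasi-closed sets, and then shrink the resulting quasi-closed set down to a pseudo-closed witness sitting inside it, while simultaneously recovering an $E$-generator inside that pseudo-closed witness.

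First, I would fix $C$ with $C = \cl(B)$ for some $B \in \gen_E(x)$, and apply Lemma \ref{lem:E-QC} to the set $Q = C \setminus \{y \st x \in \cl^b(y)\}$: it is quasi-closed with $\cl(Q) = C$. Since $x \in \cl^b(x)$, we have $x \notin Q$, so in particular $Q$ is not closed.

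Next, I would produce a pseudo-closed set $P \subseteq Q$ with $\cl(P) = C$. Among all quasi-closed subsets $R$ of $Q$ satisfying $\cl(R) = C$, pick an inclusion-wise minimal one $P$ (such a $P$ exists because $Q$ itself is a candidate). I claim $P$ is pseudo-closed: any quasi-closed $P' \subsetneq P$ with $\cl(P') = C$ would automatically satisfy $P' \subseteq Q$, contradicting the minimality of $P$ inside $Q$. Hence $P$ is pseudo-closed, $x \notin P$ since $P \subseteq Q$, and the implication $P \imp C \setminus P$ belongs to $\is_{DG}$ by definition of the canonical base.

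Finally, I would extract $A \in \gen_E(x)$ from within $P$: starting from $P$, iteratively remove any element lying in the $\cl^b$-closure of the others to obtain a $\cl^b$-minimal spanning set $A$ of $C$ contained in $P$. The inclusion $A \subseteq P \subseteq Q$ forces $x \notin \cl^b(A)$. Since $C$ is by hypothesis the closure of some $E$-generator of $x$, Corollary \ref{cor:E-min-span-set} then guarantees $A \in \gen_E(x)$, and all requirements are met.

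The only delicate step is the passage from a quasi-closed subset of $Q$ that is minimal inside $Q$ to a genuinely pseudo-closed set; this is where one must verify that minimality relative to $Q$ upgrades to absolute minimality among all quasi-closed spanning sets of $C$. The point is simply that any strictly smaller quasi-closed spanning set of $C$ must itself be a subset of $Q$ (because $Q$ has already been chosen minimally in that sense among its own subsets), which makes the argument go through. Everything else is a direct application of Lemma \ref{lem:E-QC} and Corollary \ref{cor:E-min-span-set}.
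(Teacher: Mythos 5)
Your overall route is the paper's route (Lemma~\ref{lem:E-QC}, then a pseudo-closed $P \subseteq Q$, then an $E$-generator inside $P$ via Corollary~\ref{cor:E-min-span-set}), and your justification that a minimal quasi-closed spanning subset of $Q$ is genuinely pseudo-closed is fine (indeed trivially so, since any smaller quasi-closed spanning set is a subset of $P$, hence of $Q$). The gap is in the extraction step. Removing from $P$, one by one, elements lying in the $\cl^b$-closure of the others produces a set $A$ with $\cl^b(A) = \cl^b(P)$; it is irredundant, but it is \emph{not} in general a $\cl^b$-minimal spanning set of $C$, because $\cl^b$-minimality is minimality of $\cl^b(A)$ among \emph{all} spanning sets of $C$ (a proper $\cl^b$-refinement need not be a subset of $A$), not minimality under element deletion. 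Concretely, take Example~\ref{ex:S7-gluing} with $x = d$ and $C = \agset{abcde}$: there $Q = P = \agset{abce}$, your procedure removes only $c$ (as $c \in \cl^b(e)$) and outputs $A = \agset{abe}$, whose $\cl^b$-closure is $\agset{abce}$. But $\agset{ae}$ spans $C$ with $\cl^b(\agset{ae}) = \agset{ace} \subset \agset{abce}$, so $\agset{abe}$ is not a $\cl^b$-minimal spanning set, not even a minimal generator of $d$, and certainly not in $\gen_E(d)$; Corollary~\ref{cor:E-min-span-set} therefore does not apply to your $A$, and the proof breaks at its last step (the correct witnesses here are $\agset{ae}$ or $\agset{be}$).

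The repair is exactly the point the paper's proof makes and your write-up skips: since $P$ is quasi-closed and spans $C$, and $C$ is not the closure of a single element (it is the closure of a $D$-generator in a standard space), $P$ is $\cl^b$-closed. Now choose $A \subseteq P$ spanning $C$ with $\cl^b(A)$ inclusion-wise minimal among subsets of $P$ spanning $C$. If some $B$ with $\cl^b(B) \subset \cl^b(A)$ spanned $C$, then $\cl^b(B) \subset \cl^b(A) \subseteq \cl^b(P) = P$ would be a subset of $P$ spanning $C$ with strictly smaller $\cl^b$-closure, contradicting the choice of $A$; hence $A$ is a $\cl^b$-minimal spanning set of $C$. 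Your observation that $A \subseteq Q$ forces $x \notin \cl^b(A)$ is correct and completes the hypotheses of Corollary~\ref{cor:E-min-span-set}, which then yields $A \in \gen_E(x)$ as desired.
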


\ifx\arxiv\undefined
\else
\begin{proof}
Let $C \in \{\cl(B) \st B \in \gen_E(x)\}$.
By Lemma \ref{lem:E-QC}, $Q = C \setminus \{y \st x \in \cl^b(y)\}$ is quasi-closed and spans $C$.
Let $P$ be a pseudo-closed set spanning $C$ such that $P \subseteq Q$.
As $P$ spans $C$ and $P$ is quasi-closed, it is $\cl^b$-closed.
Therefore, it contains a $\cl^b$-minimal spanning set $A$ of $C$. 
By Corollary \ref{cor:E-min-span-set}, $A$ is a $E$-generator of $x$.
This concludes the proof.
\ifx\arxiv\undefined
\qed
\fi
\end{proof}
\fi

Using Theorem \ref{thm:canonical} and Theorem \ref{thm:E-DG}, we obtain the following straightforward corollary connecting the $E$-base with other IBs.

\begin{corollary} \label{cor:E-IB}
Let $(\U, \cl)$ be a closure space and let $(\U, \is)$ be an IB for $(\U, \cl)$.
For all $x \in \U$ and all $C \in \{\cl(B) \st B \in \gen_E(x)\}$, there exists an implication $A \imp Y$ in $\is$ such that $x \notin A$ and $\cl(A) = C$.
\end{corollary}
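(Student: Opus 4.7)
The plan is to chain together Theorem~\ref{thm:E-DG} and Theorem~\ref{thm:canonical} in the obvious way; the only thing to check is that the set $A$ produced by the latter inherits the property $x \notin A$ from the pseudo-closed set provided by the former. No real obstacle is expected here since both ingredients are already available and do exactly what we need.

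First, fix $x \in \U$ and $C \in \{\cl(B) \st B \in \gen_E(x)\}$. By Theorem~\ref{thm:E-DG}, the canonical base $(\U, \is_{DG})$ of $(\U, \cl)$ contains an implication whose premise is a pseudo-closed set $P$ with $\cl(P) = C$ and $x \notin P$ (this last property is the crucial one, and it comes for free from the theorem). In particular, $P$ is a pseudo-closed set of $(\U, \cl)$.

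Next, since $(\U, \is)$ is assumed to be an IB for $(\U, \cl)$, Theorem~\ref{thm:canonical} applies to $P$: there exists an implication $A \imp Y$ in $\is$ such that $A \subseteq P$ and $\cl(A) = \cl(P) = C$. Because $A \subseteq P$ and $x \notin P$, we immediately conclude $x \notin A$, and $A \imp Y$ is the desired implication. This completes the proof.
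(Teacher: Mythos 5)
Your proposal is correct and follows exactly the route the paper intends: the corollary is stated as a direct consequence of Theorem~\ref{thm:E-DG} (which supplies a pseudo-closed $P$ with $\cl(P)=C$ and $x \notin P$) combined with Theorem~\ref{thm:canonical} (which supplies $A \imp Y \in \is$ with $A \subseteq P$ and $\cl(A)=\cl(P)$), and your observation that $x \notin A$ is inherited from $A \subseteq P$ is precisely the only detail to check.
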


Finally, we look at the particular case where all non-ji essential sets of $(\U, \cl)$ are incomparable.
We first prove a lemma that is the analogue for the $E$-generators and minimal non-ji essential sets of the fact that the pseudo-closed sets spanning an inclusion-wise minimal essential set are precisely its minimal spanning sets (see, e.g., \cite{distel2011complexity}). 

\begin{lemma} \label{lem:incomparable-nonji}
Let $C$ be an inclusion-wise minimal non-ji essential set.
Then, $\cl^b$ is a one-to-one correspondence between the $\cl^b$-minimal spanning sets of $C$ and its pseudo-closed sets.
Moreover, for any $\cl^b$-minimal spanning set $A$ of $C$ and any $x \in C \setminus \cl^b(A)$, $A$ is a $E$-generator of $x$.
\end{lemma}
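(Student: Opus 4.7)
I would prove the ``moreover'' part first and then leverage it to establish the bijection. A preliminary observation, used throughout, is that every quasi-closed (in particular every pseudo-closed) set $Q$ spanning $C$ is $\cl^b$-closed: since $C$ is non-ji, no singleton generates $C$, so for each $y \in Q$, $\cl(y) \subsetneq C$, and quasi-closedness then gives $\cl(y) \subseteq Q$.

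\textbf{Moreover part.} Fix a $\cl^b$-minimal spanning set $A$ of $C$ and $x \in C \setminus \cl^b(A)$. I would apply Lemma~\ref{lem:E-generator}: conditions~(1) and~(2) are immediate from the hypotheses, so only condition~(3) must be verified. Suppose, for contradiction, that some closed $D \subsetneq C$ contains $x$ with $x$ not prime in $(\idl D, \subseteq)$. Then $x$ has a non-singleton minimal generator $B$ with $\cl(B) \subseteq D \subsetneq C$, and Corollary~\ref{cor:E-spanned} yields some $A' \in \gen_E(x)$ with $\cl(A') \subsetneq C$. By Lemma~\ref{lem:E-QC}, $\cl(A')$ is essential, and the minimality of $C$ as a non-ji essential set forces $\cl(A')$ to be ji, say $\cl(A') = \cl(z)$. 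From $x \in \cl(z)$ and $x \notin \cl^b(A)$, I get $z \notin \cl^b(A)$ (otherwise $\cl(z) \subseteq \cl^b(A)$ would imply $x \in \cl^b(A)$). The crux is then to build a proper $\cl^b$-refinement of $A$ spanning $C$: pick a minimal subset $A_z \subseteq A$ with $z \in \cl(A_z)$, argue $\cl(A_z) = \cl(z)$ using the minimality of $C$ (any intermediate closed set between $\cl(z)$ and $C$ would be ji or non-ji non-essential, both of which can be refined away), and check that $A^\star := (A \setminus A_z) \cup A'$ spans $C$ while $\cl^b(A^\star) \subsetneq \cl^b(A)$ strictly, using that $A' \subseteq \cl(z) \setminus \{z\}$ and that $A_z$ contributes to $\cl^b(A)$ outside $\cl(z)$. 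This contradicts the $\cl^b$-minimality of $A$, proving condition~(3).

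\textbf{Bijection.} With the moreover part in hand, for each $x \in C \setminus \cl^b(A)$ the set $Q_x := C \setminus \{y : x \in \cl(y)\}$ is quasi-closed spanning $C$ by Lemma~\ref{lem:E-QC}. Unfolding definitions gives the identity $\cl^b(A) = \bigcap_{x \in C \setminus \cl^b(A)} Q_x$ (indeed, $y \in \bigcap_x Q_x$ iff $y \in C$ and $\cl(y) \subseteq \cl^b(A)$, iff $y \in \cl^b(A)$), and a short verification shows that intersections of quasi-closed sets spanning $C$ remain quasi-closed. Hence $\cl^b(A)$ is quasi-closed. It is moreover pseudo-closed: any strictly smaller quasi-closed $P \subsetneq \cl^b(A)$ spanning $C$ is $\cl^b$-closed by the preliminary, and any $\cl^b$-minimal spanning set $A'' \subseteq P$ satisfies $\cl^b(A'') \subseteq P \subsetneq \cl^b(A)$, contradicting $\cl^b$-minimality of $A$. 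Surjectivity of $A \mapsto \cl^b(A)$ is symmetric: given pseudo-closed $P$, any $\cl^b$-minimal spanning $A \subseteq P$ has $\cl^b(A) = P$ by minimality of $P$. Injectivity reduces to identifying the $\cl$-maximal elements of $P$ as the canonical $\cl^b$-minimal spanning set inside it.

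\textbf{Main obstacle.} The hard step is the moreover part, specifically the construction of $A^\star$ and the verification of the strict inclusion $\cl^b(A^\star) \subsetneq \cl^b(A)$. Both the equality $\cl(A_z) = \cl(z)$ and the strictness rest critically on the minimality hypothesis on $C$, which is precisely what forbids the intermediate non-ji essential closed sets that would otherwise obstruct the refinement.
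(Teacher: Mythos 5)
Your overall skeleton matches the paper's: prove the ``moreover'' part by checking conditions (1)--(3) of Lemma~\ref{lem:E-generator}, using the minimality of $C$ together with Corollary~\ref{cor:E-spanned} and Lemma~\ref{lem:E-QC}, then show $\cl^b(A)$ is pseudo-closed and get the bijection from standardness. But the step you yourself flag as the crux is a genuine gap. In the ji case you pick a minimal $A_z \subseteq A$ with $z \in \cl(A_z)$ and assert $\cl(A_z) = \cl(z)$; this does not follow from the minimality of $C$ --- a minimal subset of $A$ generating $z$ can close to any closed set above $\cl(z)$, possibly all of $C$, and ``refining away'' intermediate closed sets is not an argument that produces this equality. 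The strictness $\cl^b(A^\star) \subset \cl^b(A)$ is likewise never established. As written, condition (3), and hence the ``moreover'' part, is not proved.

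The case you are fighting is in fact vacuous, and the very inclusion you invoke kills it: if $A' \in \gen_E(x)$ had ji closure $\cl(A') = \cl(z)$, then $z \notin A'$ (otherwise $x \in \cl(z) = \cl^b(z) \subseteq \cl^b(A')$, contradicting that $A'$ is a non-trivial $\cl^b$-generator of $x$), so $A' \subseteq \cl(z) \setminus \{z\}$, which is closed because the space is standard, giving $\cl(A') \subseteq \cl(z) \setminus \{z\} \subset \cl(z)$, a contradiction. Hence $\cl(A')$ is a non-ji essential set (essential by Lemma~\ref{lem:E-QC}) strictly contained in $C$, contradicting the minimality of $C$ outright --- no refinement construction is needed; this is essentially how the paper verifies condition (3). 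Once this is repaired, your bijection argument is a legitimate variant of the paper's: the paper gets quasi-closedness of $\cl^b(A)$ directly from minimality (every subset of $\cl^b(A)$ closing strictly below $C$ sees only ji essential sets, so its $\cl$- and $\cl^b$-closures coincide), whereas you obtain it from Lemma~\ref{lem:E-QC} via the intersection $\cl^b(A) = \bigcap_{x \in C \setminus \cl^b(A)} Q_x$, which does work; you should additionally record that $\cl^b(A) \neq C$ (a pseudo-closed set must not be closed; this holds because some pseudo-closed set spanning $C$ is $\cl^b$-closed and proper, so no $\cl^b$-minimal spanning set of $C$ can have $\cl^b$-closure equal to $C$), and your injectivity remark is as terse as the paper's but in the same spirit.
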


\begin{proof}
We first show that for any $\cl^b$-minimal spanning set $A$ of $C$, $\cl^b(A)$ is pseudo-closed.
We begin by showing that it is quasi-closed.
Let $Q \subseteq \cl^b(A)$ such that $\cl(Q) \subset \cl(A)$.
By assumption, any essential set in $(\idl \cl(Q), \subseteq)$ is join-irreducible.
Therefore, any implication of the canonical base whose premise and conclusion are included in $\cl(Q)$
is binary.
It follows that for every $Q' \subseteq \cl(Q)$, $\cl(Q') = \cl^b(Q')$.
We obtain $\cl(Q) = \cl^b(Q) \subseteq \cl^b(A)$.
This makes $\cl^b(A)$ quasi-closed.
Assume for contradiction it is not pseudo-closed and let $P \subset \cl^b(A)$ be a pseudo-closed set spanning $C$.
Then, $P$ is $\cl^b$-closed, and thus, there exists a $\cl^b$-minimal spanning set $B$ of $C$ included in $P$.
We obtain $B \subseteq P \subset \cl^b(A)$, which contradicts $A$ being a $\cl^b$-minimal spanning set of $C$.
Therefore, $\cl^b(A)$ is indeed pseudo-closed.
As a consequence, we also obtain that each pseudo-closed set $P$ spanning $C$ equals $\cl^b(A)$ for some $\cl^b$-minimal spanning set $A$ of $C$.
Such a $A$ must moreover be unique as $(\U, \cl)$ is standard, so that for any two distinct $\cl^b$-minimal spanning sets $A, B$ of $C$, $\cl^b(A) \neq \cl^b(B)$.
We thus obtain that $\cl^b$ is a one-to-one correspondence between the $\cl^b$-minimal spanning sets of $C$ and its pseudo-closed sets.

It remains to argue that for any $\cl^b$-minimal spanning set $A$ of $C$, and any $x \in C \setminus \cl^b(A)$, $A$ is a $E$-generator of $x$.
By assumption, for every $C' \in \cs$ such that $C' \subset C$ and $x \in C$, all essential sets in $(\idl C', \subseteq)$ are ji.
Therefore, $x$ is prime in $(\idl C', \subseteq)$ as otherwise there would exist the closure of some $E$-generator of $x$ included in $C'$.
Since $\cl^b(A)$ is $\cl^b$-closed, $C$ is non-ji, and $x \in C \setminus \cl^b(A)$, $\cl^b(A)$ contains a $D$-generator of $x$, so that $x$ is not prime in $(\idl C, \subseteq)$.
Hence, $x$ is almost prime in $(\idl C, \subseteq)$.
As $A$ is a $\cl^b$-minimal spanning set $A$ of $C$ being a non-trivial $\cl^b$-generator of $x$, we deduce by Lemma~\ref{lem:E-generator} that $A$ is a $E$-generator of $x$.
This concludes the proof.
\end{proof}
In particular, if $(\U, \cl)$ is an atomistic closure space, we have $\cl^b(A) = A$ for any $A \subseteq \U$.
Thus, Lemma~\ref{lem:incomparable-nonji} directly yields the next corollary that draws another connection between the $E$-base and the canonical base in case all non-ji essential sets are incomparable.
\begin{corollary} \label{cor:incomparable-valid}
The $E$-base of a closure space $(\U, \cl)$ where the non-ji essential sets are pairwise incomparable is valid.
If moreover $(\U, \cl)$ is atomistic, then its canonical and $E$-base are equal.
\end{corollary}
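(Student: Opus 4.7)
The plan is to assemble the corollary directly from Lemma~\ref{lem:incomparable-nonji}: the incomparability hypothesis forces every non-ji essential set to be inclusion-wise minimal among non-ji essentials, so the lemma applies uniformly across the lattice.

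First I would dispose of ji essential sets using the remark preceding Example~\ref{ex:leaf} that these are never faulty (they are handled by the binary part $\is^b \subseteq \is_E$, since any pseudo-closed set $P$ spanning a ji essential $\cl(x)$ must contain $x$ itself). The problem then reduces to showing, for every pseudo-closed set $P$ spanning a non-ji essential set $C$, that $\is_E(P) = C$. By the incomparability hypothesis, such a $C$ is automatically inclusion-wise minimal non-ji essential, so Lemma~\ref{lem:incomparable-nonji} produces a $\cl^b$-minimal spanning set $A$ of $C$ with $P = \cl^b(A)$, together with the statement that $A \in \gen_E(x)$ for every $x \in C \setminus \cl^b(A) = C \setminus P$. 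Hence $A \imp x \in \is_E$ for each such $x$, and since $A \subseteq P$, a single round of closure gives $\is_E(P) \supseteq P \cup (C \setminus P) = C$. The reverse inclusion $\is_E(P) \subseteq C$ is automatic from $\is_E \subseteq \is_{cd}$ combined with the validity of the canonical direct base.

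For the second statement, I would observe that atomicity forces $\cl(x) = \{x\}$ for every $x \in \U$, which has three immediate consequences: $\cl^b$ is the identity, $\is^b = \emptyset$, and no ji closed set can be essential (the only candidate pseudo-closed set $\{x\}$ is itself closed, hence never pseudo-closed). Thus every essential set is non-ji, and by hypothesis all essential sets are pairwise incomparable. Applying Lemma~\ref{lem:incomparable-nonji} under $\cl^b = \mathrm{id}$ shows that pseudo-closed sets coincide with the inclusion-wise minimal spanning sets of their closures, and that each such $A$ lies in $\gen_E(x)$ for every $x \in \cl(A) \setminus A$. The non-binary part of $\is_E$ is therefore exactly $\{A \imp x : A \text{ pseudo-closed}, x \in \cl(A) \setminus A\}$, and aggregating yields the canonical base $\is_{DG}$.

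There is no genuine obstacle here: the statement is really a packaging of Lemma~\ref{lem:incomparable-nonji} with the validity of $\is_{cd}$ and Theorem~\ref{thm:canonical}. The only care I would take is in the atomistic case, where I must explicitly rule out ji essential sets so that Lemma~\ref{lem:incomparable-nonji} covers every pseudo-closed set rather than only the non-ji ones, and verify that the binary part vanishes so that the aggregated $E$-base matches $\is_{DG}$ on the nose rather than only up to $\is^b$.
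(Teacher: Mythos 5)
Your proposal is correct and takes essentially the same route as the paper, which derives the corollary directly from Lemma~\ref{lem:incomparable-nonji} (ji essential sets being non-faulty via the binary part, and $\cl^b = \mathrm{id}$ in the atomistic case), exactly as you spell out. The only step you leave implicit in the ``exactly'' claim for the atomistic case---that every non-binary implication of $\is_E$ has pseudo-closed premise---is a routine citation rather than a gap: by Lemma~\ref{lem:E-generator}(1) an $E$-generator is a minimal spanning set of its closure, which is essential by the remark following Lemma~\ref{lem:E-QC}, hence pseudo-closed by the bijection in Lemma~\ref{lem:incomparable-nonji}.
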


\section{Semidistributive lattices} \label{sec:semidistributive}

This section is devoted to Theorem \ref{thm:E-valid-SD}.
We first give a key lemma.

\begin{lemma} \label{lem:pseudo-E}
Let $(\U, \cl)$ be a closure space with semidistributive lattice $(\cs, \subseteq)$.
Let $E$ be a non-ji essential set of $(\U, \cl)$ with associated pseudo-closed set $P$ and canonical spanning set $A$.
For each $C \prec E$ in $(\cs, \subseteq)$ and each $x$ in the canonical spanning set of $C$, $x \notin P$ implies that $A \imp x$ belongs to the $E$-base of $(\U, \cl)$.
\end{lemma}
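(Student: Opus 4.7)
Proof plan:

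The plan is to show that $A$ is an $E$-generator of $x$ by verifying the three conditions of Lemma~\ref{lem:E-generator}. Condition (1) is immediate since $A$ is the canonical spanning set of $E$, so it is $\cl^b$-minimal for spanning $\cl(A) = E$ by Theorem~\ref{thm:SDj-canonical}. For condition (2), I would first establish that $A \subseteq P$: by Proposition~\ref{prop:SDj-UC}, the pseudo-closed set $P$ spanning $E$ is unique, and since $E$ is non-ji, $P$ is $\cl^b$-closed (the preliminaries note that a quasi-closed set whose closure is not the closure of a singleton is $\cl^b$-closed). Because $P$ spans $E$, it contains some $\cl^b$-minimal spanning set of $E$, which by uniqueness is $A$. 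Hence $\cl^b(A) \subseteq P$, and the hypothesis $x \notin P$ gives $x \notin \cl^b(A)$; combined with $x \in C \subset E = \cl(A)$, this shows that $A$ is a non-trivial $\cl^b$-generator of $x$.

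The bulk of the work would be verifying condition (3'): for every predecessor $C'$ of $E$ with $x \in C'$, $x$ is prime in $(\idl C', \subseteq)$. The hypothesis that $x$ lies in the canonical spanning set of $C$ immediately settles $C' = C$, because by Theorem~\ref{thm:coatoms-SDj} applied to the join-semidistributive sublattice $(\idl C, \subseteq)$, the elements of its canonical spanning set are prime in $(\idl C, \subseteq)$. For $C' \neq C$, I would argue by contradiction: suppose there is a non-singleton minimal generator $B \subseteq C'$ of $x$. Then $\cl(B) \cap C$ is a closed set properly contained in $C$ and still containing $x$, so primality of $x$ in $(\idl C, \subseteq)$ forces a singleton generator $z \in \cl(B) \cap C$ with $x \in \cl(z)$. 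Note that $z \in P$ would yield $x \in \cl^b(z) \subseteq P$, contradicting $x \notin P$; therefore $z \in C \setminus P$. From there I would combine the meet-semidistributivity of $(\cs, \subseteq)$ (Theorem~\ref{thm:SD-arrows}(2)), the uniqueness of $P$, and the coatom structure of $(\idl E, \subseteq)$ to promote the tension created by such a $z$ into an actual contradiction, likely by producing a non-singleton minimal generator of $x$ inside $C$ itself.

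The main obstacle will be handling the case $C' \neq C$ in condition (3'). The hypothesis pins down primality of $x$ only in the single predecessor $C$, and join-semidistributivity alone does not automatically transport this primality across all predecessors of $E$ containing $x$. The proof will need to exploit both semidistributive laws together with the rigidity of $P$ and the $\dpp$-bijection between $\U$ and meet-irreducibles (Theorem~\ref{thm:SD-arrows}(3)): a non-singleton minimal generator of $x$ in some $C' \neq C$ should project, via the coatom structure of $(\idl E, \subseteq)$, back to $C$ and contradict the primality of $x$ there. Once (3') is established, Lemma~\ref{lem:E-generator} yields $A \in \gen_E(x)$, and the corresponding implication $A \imp x$ belongs to $\is_E$ by Definition~\ref{def:E-base}.
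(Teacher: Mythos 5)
Your conditions (1) and (2), and the case $C' = C$ of condition (3'), are handled exactly as in the paper (reduction via $A \subseteq P$, $\cl^b$-closedness of $P$, and Theorem~\ref{thm:coatoms-SDj} applied to $(\idl C, \subseteq)$). But the remaining case of (3') --- predecessors $C' \neq C$ of $E$ with $x \in C'$ --- is the heart of the lemma, and your plan for it has a genuine gap. The intermediate step you propose is already flawed: from a non-singleton minimal generator $B \subseteq C'$ of $x$ you pass to the closed set $\cl(B) \cap C \subset C$ containing $x$ and claim that primality of $x$ in $(\idl C, \subseteq)$ ``forces a singleton generator $z$''. Primality never produces a generator; it only restricts the shape of those that exist, and the mere membership $x \in \cl(B) \cap C$ yields no generator of $x$ inside $C$ at all (the set $\cl(B)\cap C$ may contain no spanning set of a generator avoiding $x$). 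Beyond that, your plan for turning the ``tension'' into a contradiction is left as a hope (``should project \dots back to $C$''), so the crucial use of meet-semidistributivity is never actually made.

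The paper resolves this case by showing it is vacuous rather than by proving primality there. After reducing to $E = \U$, with coatoms $M_1, \dots, M_k$ and $F_i = P \cap M_i$, the key claim is that $M_i \cap M_j = F_i \cap F_j \subseteq P$ for all $i \neq j$. Its proof uses the arrow characterization of meet-semidistributivity (Theorem~\ref{thm:SD-arrows}): if $F_i \cap F_j \subset M_i \cap M_j$, one finds $z \in M_i \cap M_j$ with $F_i \cap F_j \downp z$, lifts to incomparable closed sets $C_i \subseteq M_i$, $C_j \subseteq M_j$ with $z \dpp C_i$ and $z \dpp C_j$ (incomparability coming from $a_j \in C_i \setminus C_j$ and $a_i \in C_j \setminus C_i$), contradicting SDm. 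Consequently, since $x \notin P$ and $x \in C = M_i$, the claim gives $x \notin M_j$ for every $j \neq i$, so no predecessor $C' \neq C$ contains $x$ and condition (3') reduces entirely to the case you already settled. Without this claim (or an equivalent substitute), your argument does not go through.
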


\ifx\arxiv\undefined
\else
\begin{proof}
As a preliminary step, note that since $(\cs, \subseteq)$ is semidistributive, so is $(\idl C, \subseteq)$.
Moreover, $X \subseteq C$ is essential (resp.~quasi-closed, pseudo-closed) in $(\cs, \subseteq)$ if and only if it is in $(\idl C, \subseteq)$.
Therefore, in order to prove our lemma, it is sufficient to prove it for $E = \U$.

Suppose that $\U$ is essential and let $P$ be the pseudo-closed set spanning $\U$.
Let $A = \{a_1, \dots, a_k\}$ be the canonical spanning set of $\U$, and let $M_1, \dots, M_k$ be the corresponding coatoms given by Theorem \ref{thm:coatoms-SDj}.
Remark that since $P$ spans $\U$, and as $\U \neq \cl(z)$ for $z \in \U$ by assumption, $P$ is $\cl^b$-closed.
In particular, $A \subseteq P$.
For each $1 \leq i \leq k$, we define
\[ F_i = P \cap M_i. \]
We begin with a series of claims, the first of which is a useful property of the $F_i$'s.

\begin{claim} \label{claim:F-incomp}
For each $i$, $A \setminus \{a_i\} \subseteq F_i$ and $a_i \notin F_i$.
Thus, $F_j \nsubseteq F_i$ for each $i \neq j$.
\end{claim}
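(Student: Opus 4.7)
The plan is to unpack the definitions and apply the structural results on semidistributive lattices that were just recalled, especially Theorem~\ref{thm:coatoms-SDj} and Theorem~\ref{thm:SD-arrows}. All three statements of the claim will follow directly, so there is no serious obstacle; the real work is to verify that, in a coatomic setting, the $\dpp$-relation reduces to plain set-containment.

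First I would treat $a_i \notin F_i$. By Theorem~\ref{thm:coatoms-SDj}, we have $a_i \dpp M_i$, which in particular gives $a_i \upp M_i$ and hence $a_i \nsubseteq M_i$. Identifying $a_i$ with its closure as usual, this reads $a_i \notin M_i$, and so $a_i \notin P \cap M_i = F_i$.

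Next I would prove $A \setminus \{a_i\} \subseteq F_i$. Since $A \subseteq P$ (recall that $P$ is $\cl^b$-closed because $\U \neq \cl(z)$ for any $z$, so it contains the canonical spanning set of $\U$), it suffices to show that $a_j \subseteq M_i$ whenever $j \neq i$. By Theorem~\ref{thm:SD-arrows}(1), join-semidistributivity ensures that $a_i$ is the \emph{unique} element of $\U$ satisfying $a_j \dpp M_i$; since each $a_j$ is join-irreducible, this is equivalent to being the unique element with $a_j \upp M_i$. Now $M_i$ is a coatom, so $\U$ is the only closed set strictly above $M_i$; hence $a_j \upp M_i$ is equivalent to $a_j \nsubseteq M_i$. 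For $j \neq i$ we therefore must have $a_j \subseteq M_i$, i.e.\ $a_j \in M_i$, giving $A \setminus \{a_i\} \subseteq P \cap M_i = F_i$.

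Finally, the non-inclusion $F_j \nsubseteq F_i$ for $i \neq j$ is immediate from the two facts above: $a_i \in A \setminus \{a_j\} \subseteq F_j$ by the second step applied to index $j$, while $a_i \notin F_i$ by the first step. Hence $a_i$ witnesses $F_j \nsubseteq F_i$, completing the proof of the claim.
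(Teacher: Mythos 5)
Your first step ($a_i \notin F_i$) and your final deduction are fine, and $A \subseteq P$ is justified as in the paper. The gap is in the middle step. To get $a_j \in M_i$ for $j \neq i$ you invoke Theorem~\ref{thm:SD-arrows}(1) and then assert that, because each $a_j$ is join-irreducible, being the unique element with $x \dpp M_i$ is equivalent to being the unique element with $x \upp M_i$. That transfer is false: for a join-irreducible $x$, $x \upp M$ does not imply $x \downp M$ (one also needs $x_* \subseteq M$), and since $M_i$ is a coatom, $x \upp M_i$ just means $x \notin M_i$; a coatom of a join-semidistributive lattice may omit several elements, so the uniqueness in Theorem~\ref{thm:SD-arrows}(1) puts no bound on the number of $\upp$-related elements. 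The paper's own Example~\ref{ex:carpet} witnesses this: $\agset{acefg}$ is a coatom with both $b \upp \agset{acefg}$ and $d \upp \agset{acefg}$, yet only $b$ is $\dpp$-related to it, since $d_* = \{b\} \nsubseteq \agset{acefg}$. Hence from $a_j \notin M_i$ you cannot conclude $a_j \dpp M_i$, and no contradiction with the uniqueness of $a_i$ arises; the step as written does not go through.

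The statement is of course still provable, but the uniqueness you need runs the other way: by Theorem~\ref{thm:coatoms-SDj} each $a_j$ is \emph{prime}, i.e.\ there is a unique meet-irreducible $M$ with $a_j \upp M$, namely $M_j$; so if $a_j \notin M_i$ for some $i \neq j$, then $a_j \upp M_i$ (coatom), contradicting that uniqueness. The paper argues slightly differently but in the same spirit: minimality of the canonical spanning set gives $a_i \notin \cl(A \setminus \{a_i\})$, and this closed set, not containing the prime $a_i$, must lie below the unique mi $M_i$ with $a_i \upp M_i$, whence $A \setminus \{a_i\} \subseteq M_i \cap P = F_i$. Either repair uses primality of the elements of $A$ (uniqueness of the mi for a fixed element), not the uniqueness of the element for a fixed mi that your proposal relies on.
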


\begin{proof}
First, $A$ is a minimal spanning set of $\U$, so that $a_i \notin \cl(A \setminus \{a_i\})$.
Hence, $A \setminus \{a_i\} \subseteq \cl(A \setminus \{a_i\}) \subseteq M_i$ since $M_i$ is closed and it is the unique mi such that $a_i \upp M_i$ due to the primality of $a_i$.
Thanks to $A \subseteq P$, we readily have $A \setminus \{a_i\} \subseteq P$.
Thus, $A \setminus \{a_i\} \subseteq M_i \cap P = F_i$.
The fact that $a_i \notin F_i$ follows from $a_i \notin M_i$.
\renewcommand{\qed}{$\blacksquare$}
\ifx\arxiv\undefined
\qed
\fi
\end{proof}

\begin{claim} \label{claim:F-cap-M}
For each $i \neq j$, $F_i \cap (M_i \cap M_j) = F_j \cap (M_i \cap M_j)$.
\end{claim}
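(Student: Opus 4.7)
The plan is straightforward: unfold the definitions of $F_i$ and $F_j$ and apply basic set-theoretic identities. By construction, $F_i = P \cap M_i$ and $F_j = P \cap M_j$. Intersecting each with $M_i \cap M_j$ yields
\[ F_i \cap (M_i \cap M_j) = (P \cap M_i) \cap (M_i \cap M_j) = P \cap (M_i \cap M_j), \]
and symmetrically
\[ F_j \cap (M_i \cap M_j) = (P \cap M_j) \cap (M_i \cap M_j) = P \cap (M_i \cap M_j). \]
Both sides collapse to $P \cap M_i \cap M_j$, so the equality is immediate.

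There is essentially no obstacle to overcome here; the statement is a definitional identity packaged as a claim, presumably stated explicitly because its conclusion---that the portion of $P$ lying inside $M_i \cap M_j$ is recovered equally from either $F_i$ or $F_j$---will be invoked repeatedly in the sequel to move between the two representations of the trace of the pseudo-closed set on a meet of coatoms. I expect this symmetry to play a role in the subsequent claims that will compare closures involving different coatoms $M_i, M_j$, exploiting the bijection $\dpp$ between primes $a_i$ and coatoms $M_i$ furnished by Theorems~\ref{thm:SD-arrows} and~\ref{thm:coatoms-SDj} in order to eventually show that the canonical spanning set $A$ is indeed an $E$-generator of each $x$ in the canonical spanning set of every predecessor $C \prec E$ with $x \notin P$.
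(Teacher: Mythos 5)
Your proof is correct and is essentially the same as the paper's: both simply unfold $F_i = P \cap M_i$, $F_j = P \cap M_j$ and observe that each side collapses to $P \cap M_i \cap M_j$.
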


\begin{proof}
This follows from $F_i \cap (M_i \cap M_j) = P \cap M_i \cap (M_i \cap M_j) = P \cap M_j \cap (M_i \cap M_j) = F_j \cap (M_i \cap M_j)$.
\renewcommand{\qed}{$\blacksquare$}
\ifx\arxiv\undefined
\qed
\fi
\end{proof}

\begin{claim} \label{claim:MiMj-below-P}
For each $i \neq j$, $F_i \cap F_j = M_i \cap M_j \subseteq P$.
\end{claim}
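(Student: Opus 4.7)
The equality $F_i \cap F_j = (P \cap M_i) \cap (P \cap M_j) = P \cap (M_i \cap M_j)$ follows immediately from the definitions of $F_i$ and $F_j$, so the claim reduces to showing the inclusion $M_i \cap M_j \subseteq P$.

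My plan is to exploit the quasi-closedness of $P$: every $X \subseteq P$ with $\cl(X) \subsetneq \U = \cl(P)$ satisfies $\cl(X) \subseteq P$. Since $M_i \cap M_j \subseteq M_i \subsetneq \U$, it suffices to exhibit a subset $X \subseteq P$ whose closure equals $M_i \cap M_j$; quasi-closedness will then force $M_i \cap M_j = \cl(X) \subseteq P$ and finish the proof.

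The natural candidate is $X = F_i \cap F_j = P \cap (M_i \cap M_j)$, for which I aim to prove the identity $\cl(F_i \cap F_j) = M_i \cap M_j$. The inclusion $\cl(F_i \cap F_j) \subseteq M_i \cap M_j$ is immediate, since $F_i \cap F_j \subseteq M_i \cap M_j$ and the latter is closed. For the reverse inclusion, I would work inside the join-semidistributive ideal $(\idl(M_i \cap M_j), \subseteq)$: applying Theorem~\ref{thm:coatoms-SDj} to this ideal yields a canonical spanning set $\tilde{A}$ of $M_i \cap M_j$ consisting of the prime elements of the ideal corresponding to its coatoms. If we can show $\tilde{A} \subseteq F_i \cap F_j$, then $M_i \cap M_j = \cl(\tilde{A}) \subseteq \cl(F_i \cap F_j)$ and we are done.

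The main obstacle is to argue $\tilde{A} \subseteq P$, as the inclusion $\tilde{A} \subseteq M_i \cap M_j$ is automatic. Primality in the smaller ideal is weaker than primality in $\cs$, so membership in $P$ does not follow at once. I expect to obtain it by iterating the very quasi-closedness argument used to prove that $F_i$ is closed in the preceding claim: starting from $A \subseteq P$, the closures $\cl(A \setminus \{a_\ell\}) \subseteq M_\ell \subsetneq \U$ lie in $P$ by quasi-closedness; then, descending through the predecessors of $M_i \cap M_j$ in $\cs$ and using the coprimality of $M_i$ and $M_j$—inherited from the primality of $a_i$ and $a_j$ via Theorem~\ref{thm:SD-arrows}—should successively deposit each element of $\tilde{A}$ inside $P$. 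Should this direct path prove too delicate, a contradiction argument offers a robust fallback: assuming $y \in (M_i \cap M_j) \setminus P$ and invoking the uniqueness of the pseudo-closed set spanning $\U$ guaranteed by Proposition~\ref{prop:SDj-UC}, one would construct a proper quasi-closed subset of $P$ still spanning $\U$ (for instance by removing from $P$ an element made redundant by the presence of $y$), contradicting the minimality of $P$.
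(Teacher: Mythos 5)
Your opening reduction is fine: $F_i \cap F_j = P \cap (M_i \cap M_j)$ is immediate, and by quasi-closedness of $P$ it would indeed suffice to exhibit $X \subseteq P$ with $\cl(X) = M_i \cap M_j$. But the step that carries all the weight---getting the canonical spanning set $\tilde{A}$ of $M_i \cap M_j$ (or any spanning set of $M_i \cap M_j$) inside $P$---is exactly as hard as the claim itself, and neither of your two routes establishes it. Everything you invoke is available under join-semidistributivity alone: quasi-closedness of $P$, uniqueness of the pseudo-closed set spanning $\U$ (Proposition~\ref{prop:SDj-UC}), canonical spanning sets and coprimality of the coatoms (Theorem~\ref{thm:coatoms-SDj}; note that coprimality of $M_i$ follows from primality of $a_i$ without any use of meet-semidistributivity). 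No such toolkit can suffice, because the conclusion genuinely needs SDm: in the convex geometry of Example~\ref{ex:leaf}, which is join- but not meet-semidistributive, all these ingredients are present---the coatoms $\agset{abc}$ and $\agset{bcd}$ are coprime, $\U$ is essential with unique pseudo-closed, quasi-closed spanning set $P = \agset{ad}$---and yet $M_1 \cap M_2 = \agset{bc} \nsubseteq P$. Your "descending through the predecessors \dots should successively deposit each element of $\tilde{A}$ inside $P$" and the fallback "removing from $P$ an element made redundant by the presence of $y$" are hopes rather than arguments, and, as written, they would equally "prove" the statement in that example, where it is false. So the proposal has a genuine gap precisely at the point where meet-semidistributivity must be used.

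For contrast, this is where the paper's proof brings SDm in, via arrow relations: assuming $F_i \cap F_j \subset M_i \cap M_j$, it notes $F_i \cap F_j$ is closed (quasi-closedness of $P$), picks $x \in (M_i \cap M_j) \setminus (F_i \cap F_j)$ with $x \downp F_i \cap F_j$, transfers this through Claim~\ref{claim:F-cap-M} to $x \downp F_i$ and $x \downp F_j$, and enlarges to maximal closed sets $C_i$ with $F_i \subseteq C_i \subseteq M_i$ and $C_j$ with $F_j \subseteq C_j \subseteq M_j$ satisfying $x \dpp C_i$ and $x \dpp C_j$; Claim~\ref{claim:F-incomp} gives $a_j \in C_i \setminus C_j$ and $a_i \in C_j \setminus C_i$, so $C_i$ and $C_j$ are distinct meet-irreducibles doubly arrow-related to the same $x$, contradicting Theorem~\ref{thm:SD-arrows}(2). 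If you want to salvage your plan, you need a step of exactly this kind---some explicit appeal to meet-semidistributivity that rules out elements of $(M_i \cap M_j) \setminus P$---rather than further consequences of SDj and quasi-closedness.
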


\begin{proof}
By definition of $F_i, F_j$, we have $F_i \cap F_j \subseteq M_i \cap M_j$.
We show that the other inclusion also holds.
Assume for contradiction $F_i \cap F_j \subset M_i \cap M_j$.
Since $P$ is quasi-closed, $F_i$ and $F_j$ are closed.
Hence, $F_i \cap F_j$ is closed.
Let $x$ be an element of $M_i \cap M_j$ such that $F_i \cap F_j \downp x$.
Such an element must exist by assumption.
By Claim \ref{claim:F-cap-M}, we deduce $F_i \downp x$, $F_j \downp x$.
But then, there exists inclusion-wise maximal $C_i, C_j \in \cs$ such that $F_i \subseteq C_i \subseteq M_i$, $F_j \subseteq C_j \subseteq M_j$, and $C_i, C_j \downp x$.
Hence, we have $x \dpp C_i, C_j$.
However, by Claim \ref{claim:F-incomp} we have $a_i \in C_j \setminus C_i$ and $a_j \in C_i \setminus C_j$ as $F_i \subseteq C_i \subseteq M_i$ and $F_j \subseteq C_j \subseteq M_j$.
Thus, $C_i$ and $C_j$ are incomparable, and $C_i, C_j \dpp x$ contradicts $(\cs, \subseteq)$ being SDm due to Theorem \ref{thm:SD-arrows}.
Hence, such a $x$ cannot exist, and $F_i \cap F_j = M_i \cap M_j$.
Since $F_i \cap F_j \subseteq P$, we deduce that $M_i \cap M_j \subseteq P$ holds true.
\renewcommand{\qed}{$\blacksquare$}
\ifx\arxiv\undefined
\qed
\fi
\end{proof}

We go back to the proof of the lemma.
Recall that $A$ is the canonical spanning set of $\U$.
Since $P$ is pseudo-closed and spans $\U$, $P \subset \U$ must be true.
Moreover, $\U$ is not ji and $(\U, \cl)$ is standard by assumption so that $\U = \bigcup_{i = 1}^k M_i$.
Therefore, there exists $i$ such that $F_i \subset M_i$, that is $M_i \nsubseteq P$.
Since $P$ is quasi-closed, $M_i \nsubseteq P$ implies that $A_i \nsubseteq P$ where $A_i$ is the canonical spanning set of $M_i$.
Let $x \in A_i \setminus P$.
We show that $A$ and $x$ satisfy the conditions of Lemma \ref{lem:E-generator}.
Condition (1) is readily fulfilled by $A$.
Condition (2) follows from the fact that $x \in \cl(A) = \U$, $x \notin A$ as $A \subseteq P$ and $x \notin P$, and $\cl^b(A) \subseteq P$ since $P$ is quasi-closed.
It remains to show condition (3') holds.
First, by Claim \ref{claim:MiMj-below-P}, $x \notin M_j$ for $j \neq i$.
Thus, we need only show $x$ is prime in $(\idl M_i, \subseteq)$.
But since $x$ belongs to the canonical spanning set of $M_i$, it follows from Theorem \ref{thm:coatoms-SDj} that $x$ is prime in $(\idl M_i, \subseteq)$.
Henceforth, condition (3') holds. 
Since $A$, $x$ fulfill all the conditions of Lemma \ref{lem:E-generator}, we deduce that $A \imp x$ belongs to the $E$-base of $(\U, \cl)$, concluding the proof.
\ifx\arxiv\undefined
\qed
\fi
\end{proof}
\fi

We proceed to the proof of Theorem \ref{thm:E-valid-SD} that we first restate.

\ifx\arxiv\undefined
\begingroup
\def\thetheorem{\ref{thm:E-valid-SD}}
\begin{theorem}
The (aggregated) $E$-base of a standard closure space with semidistributive lattice is valid and minimum.
\end{theorem}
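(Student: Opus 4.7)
The plan is to prove validity and minimality in sequence. For validity, it suffices to show that $\is_E$ is equivalent to the canonical base $\is_{DG}$, since every implication in $\is_E$ is sound by construction. Completeness reduces to proving that for every essential closed set $C$ with pseudo-closed spanning set $P$ (unique by Proposition~\ref{prop:SDj-UC}), $\is_E(P) \supseteq C$. I will proceed by induction on the height of $C$ in $(\cs, \subseteq)$, under the strengthened hypothesis that $\is_E(S) = \cl(S)$ for every $S \subseteq \U$ whose closure has height strictly less than that of $C$. This stronger form follows from the weaker one on essentials, since deriving all canonical-base implications for essential sets below $C$ pins down $\cl$ on the corresponding interval.

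For the inductive step, I split on whether $C$ is join-irreducible. If $C = \cl(x)$ is ji, standardness forces $C \setminus \{x\}$ to be closed, so $\cl(P) = C$ implies $x \in P$, and the binary implications $\is^b \subseteq \is_E$ already give $\is_E(P) \supseteq \cl(x) = C$. For non-ji $C$, the preliminary remark ensures that $P$ is $\cl^b$-closed (since $\cl(P) = C \neq \cl(y)$ for any $y \in \U$), hence contains the unique canonical spanning set $A$ of $C$ provided by Theorem~\ref{thm:SDj-canonical}. For each predecessor $C' \prec C$, write $A(C')$ for its canonical spanning set; Lemma~\ref{lem:pseudo-E} says that for each $x \in A(C') \setminus P$ the implication $A \imp x$ belongs to $\is_E$. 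Combining $A(C') \setminus P \subseteq \is_E(A) \subseteq \is_E(P)$ with $A(C') \cap P \subseteq P$ gives $A(C') \subseteq \is_E(P)$. The inductive hypothesis applied to the spanning set $A(C')$ of $C'$ then yields $C' \subseteq \is_E(A(C')) \subseteq \is_E(P)$. Finally, since $C$ is non-ji, standardness forces $\cl(y) \subsetneq C$ for every $y \in C$, so each such $y$ sits in some predecessor $C'$, giving $C = \bigcup_{C' \prec C} C' \subseteq \is_E(P)$ and closing the induction.

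For minimality I will aggregate $\is_E$ and count its implications. By Theorem~\ref{thm:SDj-canonical} and Corollary~\ref{cor:E-min-span-set}, every $E$-generator of any $x \in \U$ coincides with the unique canonical spanning set of the essential closed set it spans. Aggregation therefore contributes exactly one binary implication per ji essential set and exactly one non-binary implication per non-ji essential set, totaling one aggregated implication per essential set. Proposition~\ref{prop:SDj-UC} yields one pseudo-closed per essential, so $\is_{DG}$ has the same total count, and the minimality of the canonical base transfers to aggregated $\is_E$. The technical crux of the argument is the non-ji case of the inductive step, where Lemma~\ref{lem:pseudo-E} and the various uniqueness features supplied by semidistributivity---uniqueness of the canonical spanning set and of the pseudo-closed spanning $C$---must be carefully combined to tie $P$ to the canonical spanning sets of its predecessors.
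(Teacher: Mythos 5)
Your proof is correct and takes essentially the same route as the paper: the heart in both cases is Lemma~\ref{lem:pseudo-E}, used in an induction up the lattice to pull the canonical spanning sets of the predecessors of a non-ji essential set into the $\is_E$-closure of its pseudo-closed spanning set, followed by the same counting against the canonical base via Theorem~\ref{thm:SDj-canonical} and Proposition~\ref{prop:SDj-UC}. The differences are only bookkeeping: you induct on heights of essential sets and recover the full hypothesis $\is_E(S)=\cl(S)$ from the restriction of $\is_{DG}$ to ideals, whereas the paper inducts over all closed sets, treating non-essential spanning sets and the containment $P \subseteq \is_E(Y)$ directly through quasi-closedness of $\is_E(Y)$.
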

\addtocounter{theorem}{-1}
\endgroup
\else
\THMEvalidSD*
\fi

\begin{proof}
Let $(\U, \cs)$ be a standard closure space with SD lattice $(\cs, \subseteq)$.
Let $(\U, \is_E)$ be its $E$-base.
We need to show that for every $X \subseteq \U$, $\cl(X) = \is_E(X)$ where $\is_E(\cdot)$ is the closure operator induced by $(\U, \is_E)$.
By definition of the $E$-base, $\is_E(X) \subseteq \cl(X)$ for every $X \subseteq \U$.

We prove the Theorem using induction on closed sets ordered by inclusion. 
As the space is standard, it readily holds for $\emptyset$ and all atoms of $(\cs, \subseteq)$.
Let $C \in \cs$ and assume that for every $X \subseteq C$ such that $\cl(X) \subset C$, we have $\cl(X) = \is_E(X)$.
We argue that for every $Y \subseteq C$ such that $\cl(Y) = C$, we have $\is_E(Y) = C$.
If $Y = C$ then $\is_E(Y) = C$ readily holds since $\is_E(Y) \subseteq \cl(Y)$.
Thus, we can assume that $Y \subset C$.
We distinguish three cases:
\begin{enumerate}[(1)]
\item $C$ is not essential (hence not join-irreducible).
Since $C$ is not essential, $Y$ is not quasi-closed.
Therefore, there exists $X \subseteq Y$ such that $\cl(X) \subset \cl(Y)$ and $\cl(X) \nsubseteq Y$.
However, by inductive hypothesis, $\cl(X) = \is_E(X)$ and hence $\is_E(X) \nsubseteq Y$ so that $Y \subset \is_E(Y)$. 
Since this holds for every proper subset $Y$ of $C$ spanning $C$ and as $\is_E(Y) \subseteq \cl(Y)$ for all such $Y$'s, we deduce that $\is_E(Y) = \cl(Y) = C$ as required.

\item $C$ is join-irreducible.
Since the system is standard, we thus have that $C = \cl(x)$ for a unique $x \in \U$.
Moreover, $Y$ contains $x$ since $\{x\}$ is the unique minimal spanning set of $C$.
By definition $\is_E$ contains an implication $x \imp y$ for each $y$ such that $y \in \cl(x)$.
we deduce that $\cl(Y) = \is_E(Y) = C$ holds true.

\item $C$ is essential but not join-irreducible.
As all non-trivial binary implications of $(\U, \cl)$ belongs to $\is_E$, we have that $\is_E(Y)$ contains $A$, the canonical spanning set of $C$.
To show that $\is_E(Y) = C$, we prove that for each of the predecessors $C_i$, \dots, $C_k$ of $C$, the canonical spanning set $A_i$ of $C_i$ is included in $\is_E(Y)$.
On this purpose, we first argue that $\is_E(Y)$ is quasi-closed with respect to $\cl$.
Let $X \subseteq \is_E(Y)$ such that $\cl(X) \subset \cl(\is_E(Y)) = \cl(Y) = C$.
By inductive hypothesis, $\is_E(X) = \cl(X)$ so that $\cl(X) = \is_E(X) \subseteq \is_E(\is_E(Y)) = \is_E(Y)$.
Therefore, $\is_E(Y)$ is quasi-closed and in particular $P \subseteq \is_E(Y)$ with $P$ the unique pseudo-closed set spanning $C$ obtained from Proposition \ref{prop:SDj-UC}.
Now, let $C_i$ be a predecessor of $C$ and consider its canonical spanning set $A_i$.
Let $x \in A_i$.
If $x \in P$, then $x \in \is_E(Y)$ by previous argument.
If $x \notin P$, then by Lemma \ref{lem:pseudo-E} we have $A \imp x \in \is_E$.
Because $A \subseteq P \subseteq \is_E(Y)$, we deduce that $x \in \is_E(Y)$.
Therefore $A_i \subseteq \is_E(Y)$ for each $1 \leq i \leq k$.

Now, $\cl(A_i) = C_i \subset C$ for each $1 \leq i \leq k$.
By inductive hypothesis, $\is_E(X) = \cl(X)$ for each $X \subseteq C$ such that $\cl(X) \subset C$.
Henceforth, $C_i = \is_E(A_i) \subseteq \is_E(Y)$.
As $C$ is not join-irreducible and $(\U, \cl)$ is standard, we have $C = \bigcup_{i = 1}^{k} C_i \subseteq \is_E(Y) \subseteq \cl(Y) = C$.
We deduce that $\is_E(Y) = C$ for every subset $Y$ of $C$ such that $\cl(Y) = C$.
\end{enumerate}
In all three cases, we have that $\is_E(Y) = \cl(Y)$ for every $Y \subseteq C$ such that $\cl(Y) = C$.
We deduce using induction that the $E$-base is valid.

It remains to show that in its aggregated form $(\U, \is_E)$ is minimum.
On this purpose, we identify a bijection between $\is_E$ and $\is_{DG}$.
Observe first that they have the same binary part by definition.
Hence, we need only analyze their non-binary parts.
By Lemma \ref{lem:E-QC}, the closure of a $E$-generator is essential.
By Lemma \ref{lem:pseudo-E}, the canonical spanning set of each non-ji essential set is a $E$-generator.
Since $(\cs, \subseteq)$ is semidistributive, we deduce by Theorem \ref{thm:SDj-canonical} that there is a bijection between $E$-generators and non-ji essential sets.
Again by semidistributivity of $(\cs, \subseteq)$, there is a bijection between essential sets and pseudo closed sets by virtue of Proposition \ref{prop:SDj-UC}.
We get a bijection between $E$-generators and non-singleton pseudo-closed sets.
By Theorem~\ref{thm:canonical}, $(\U, \is_E)$ is thus minimum once aggregated.
\end{proof}

We remark that both meet- and join-semidistributivity are important.
Indeed, in general, neither meet-semidistributive nor join-semidistributive have valid $E$-base.
The lattice of Example \ref{ex:leaf} is join-semidistributive but not meet-semidistributive and the $E$-base of the corresponding closure space is not valid.
Below, we give an example of a closure space whose lattice is meet-semidistributive and its $E$-base not valid.

\begin{example} \label{ex:SDm-fail}
Let $(\U, \cl)$ be the closure space of the meet-semidistributive lattice of Figure \ref{fig:SDm-fail}.
We have:

\begin{align*}
\is_{DG} & = \{\agset{e \imp ac}, \agset{f \imp ad}, \agset{c \imp a}, \agset{d \imp a}\} & (\text{binary}) \\ 
& \cup \{\agset{acde \imp f}, \agset{acdf \imp e} \} & (\agset{acdef}) \\
& \cup \{\agset{ab \imp cdef} \} & (\agset{abcdef}) \\
& & \\
\is_E & = \{\agset{e \imp ac}, \agset{f \imp ad}, \agset{c \imp a}, \agset{d \imp a}\} & (\text{binary}) \\ 
& \cup \{\agset{de \imp f}, \agset{cf \imp e}\} & (\agset{acdef}) \\ 
& \cup \{\agset{ab \imp cd}\} & (\agset{abcdef})
\end{align*}

The $E$-base $(\U, \is_E)$ is not valid since $\agset{ab}$ does not generate $\agset{ef}$.
This example illustrates the importance of join-semidistributivity in Lemma \ref{lem:pseudo-E}.
Indeed, $\agset{acdef}$ is a predecessor of the essential set $\agset{abcdef}$ which has no canonical representation.
Its minimal spanning sets are $\agset{de}$, $\agset{cf}$ and $\agset{ef}$ but none of them comprises only prime elements of $(\idl \agset{acdef}, \subseteq)$, which are $a$, $c$ and $d$.
Thus, the $E$-generator and pseudo-closed set $\agset{ab}$ does not reach any spanning set of $\agset{acdef}$ in $\is_E$.

\begin{figure}[ht!]
    \centering
    \includegraphics[scale=\FIGsdm]{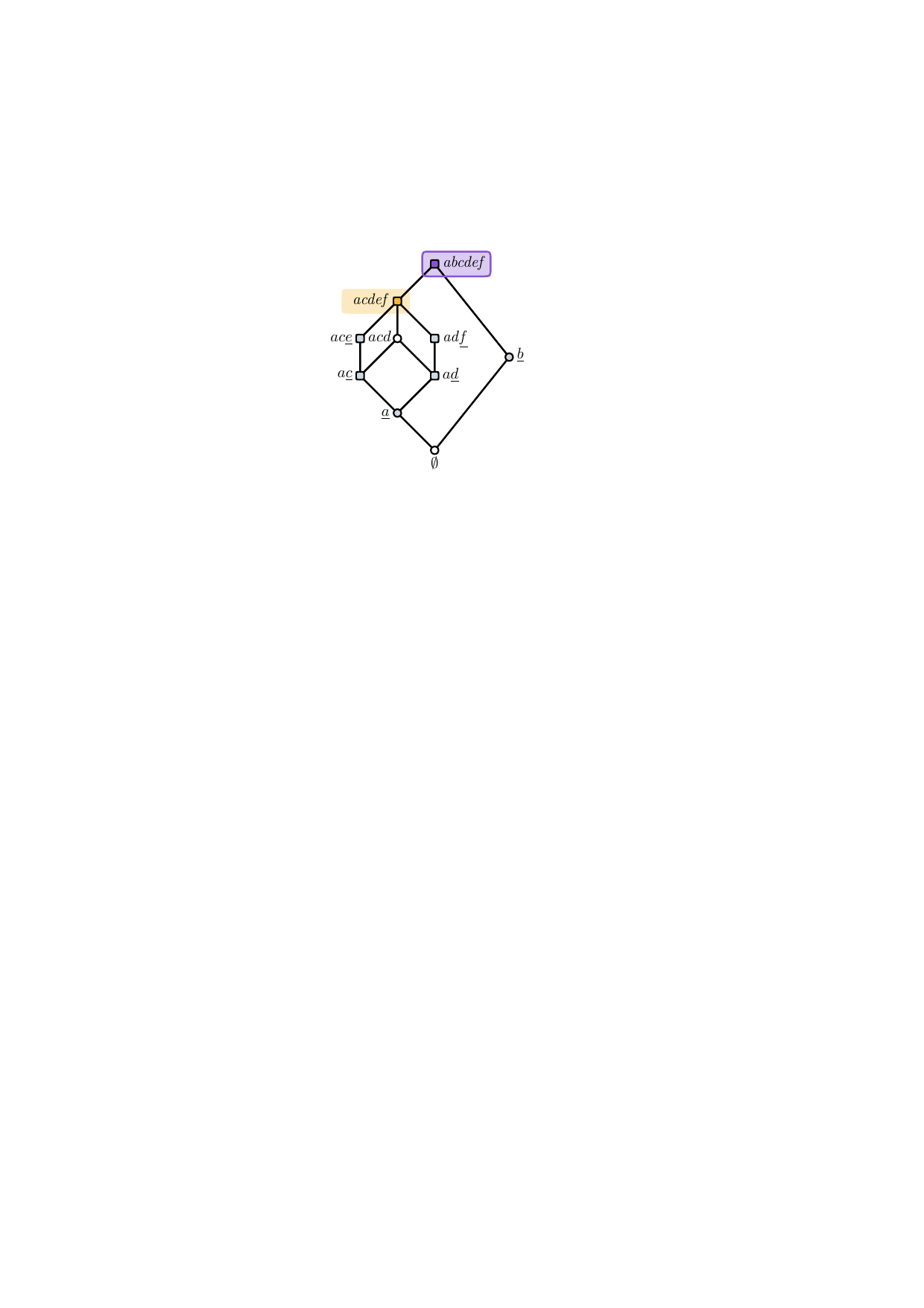}
    \includegraphics[width=\textwidth, page=2]{figures/captions.pdf}
    \caption{The meet-semidistributive lattice of Example \ref{ex:SDm-fail}. 
    The essential set $\agset{abcdef}$ is faulty, while $\agset{acdef}$ is not. The $E$-base is thus not valid.}
    \label{fig:SDm-fail}
\end{figure}
\end{example}

Note that in Example~\ref{ex:SDm-fail}, even though the $E$-base is not valid, all essential sets are spanned by some $E$-generator.
One could thus expect that in meet-semidistributive lattices, all essential sets are captured by the $E$-base.
We conclude the section with a counter-example to this claim which turns out to be, more than meet-semidistributive, upper-bounded and join-distributive.

\begin{example} \label{ex:fail-jdis}
We consider the lattice $(\cs, \subseteq)$ of Figure \ref{fig:fail-jdis}.
This lattice is not only join-distributive but also upper-bounded as the dual $D^*$ of the $D$-relation on meet-irreducible closed sets is without cycles.
This can be observed in Figure \ref{fig:fail-jdis} where the $D^*$ relation is represented as a directed graph which has no (directed) cycles.
As for the implications, the canonical base and $E$-base read as follows:

\begin{align*}
\is_{DG} & = \{\agset{c \imp ab}, \agset{b \imp a}, \agset{g \imp def}, \agset{f \imp de},
    \agset{d \imp e}, \agset{h \imp ad}\} & (\text{binary}) \\ 
& \cup \{\agset{abd \imp e}, \agset{ade \imp b}\} & (\agset{abde}) \\ 
& \cup \{\agset{abcde \imp h}, \agset{abdeh \imp c}\} & (\agset{abcdeh}) \\ 
& \cup \{\agset{abdefg \imp ch}, \agset{abcdefh \imp g}\} & (\agset{abcdefgh}) \\ 
& & \\
\is_E & = \{\agset{c \imp ab}, \agset{b \imp a}, \agset{g \imp def}, \agset{f \imp de},
    \agset{d \imp e}, \agset{h \imp ad}\} & (\text{binary}) \\ 
& \cup \{\agset{bd \imp e}, \agset{ae \imp b} \} & (\agset{abde}) \\
& \cup \{\agset{bh \imp c}, \agset{he \imp c}, \agset{cd \imp h}\} & (\agset{abcdeh}) \\ 
& \cup \{\agset{cf \imp g}, \agset{hf \imp g}\} & (\agset{abcdefgh})
\end{align*}

The pseudo-closed set $\agset{abdefg}$ is faulty since $c$, $h$ are not almost prime in $(\cs, \subseteq)$.
\begin{figure}[ht!]
    \centering
    \includegraphics[scale=\FIGjdis]{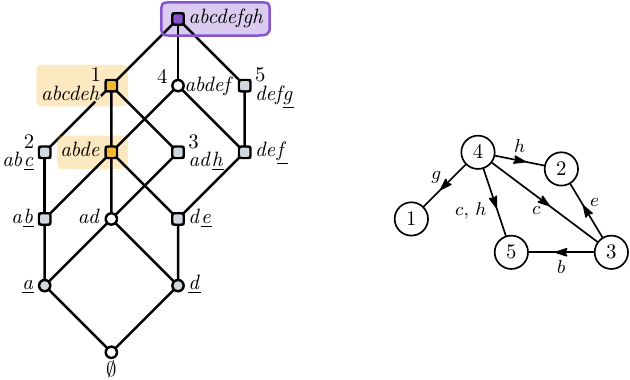}
    \includegraphics[width=\textwidth, page=2]{figures/captions.pdf}
    \caption{An upper-bounded join-distributive closure lattice where $E$-base is not valid (Example~\ref{ex:fail-jdis}).
    Meet-irreducible closed sets are labeled by numbers.
    The labeled directed graph of the $D^*$ relation is pictured on the right. 
    An arc between two mi closed sets indicates that they are related via $D^*$ and the label indicates the elements of the ground set that witness this relation.
    For instance, we have $4 D^* 3$ as $4 \upp c \downp 3$ which translates into an arc from $4$ to $3$ labeled by $c$.
    This directed graph is acyclic, which proves that the lattice is upper-bounded.
    The essential set $\agset{abcdefgh}$ is faulty because the pseudo-closed set $\agset{abdefg}$ is: it does not subsume any $E$-generator spanning $\agset{abcdefgh}$.}
    \label{fig:fail-jdis}
\end{figure}
\end{example}

\section{Modular lattices} \label{sec:modular}

In this section, we discuss the $E$-base of closure spaces with modular lattices.

We fix a standard closure space $(\U, \cl)$ with modular closure lattice $(\cs, \subseteq)$.
Recall that if $C$ is a closed set, $C_*$ is the intersection of the predecessors of $C$ (if any), and that $[C_*, C]$ is the interval of $(\cs, \subseteq)$ with minimal closed set $C_*$ and maximal closed set $C$.
We start with an example showing that in general, modular lattices do not have valid $E$-base.

\begin{example} \label{ex:modular-prototypical-fail}
Consider the closure space $(\U, \cl)$ associated to the closure lattice of Figure~\ref{fig:modular-prototypical-fail}.
We have:

\begin{align*}
\is_{DG} & = \{d \imp c, e \imp c\} & (\text{binary}) \\ 
& \cup \{ab \imp c, ac \imp b, bc \imp a\} & (abc) \\
& \cup \{abcd \imp e, abce \imp d, cde \imp ab\} & (abcde) \\
& & \\
\is_{E} & = \{d \imp c, e \imp c\} & (\text{binary}) \\ 
& \cup \{ab \imp c, ac \imp b, bc \imp a\} & (abc) \\
& \cup \{ad \imp e, bd \imp e, ae \imp d, be \imp d\} & (abcde)
\end{align*}

The implication $cde \imp ab$ does not follow from the $E$-base as $de$ is not a $E$-generator of $a$ or $b$.
This makes the essential set $abcde$ as well as the pseudo-closed set $cde$ faulty and the $E$-base non-valid. 
\begin{figure}[ht!]
    \centering
    \includegraphics[scale=\FIGmodproto]{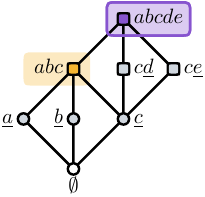}
    \includegraphics[width=\textwidth, page=2]{figures/captions.pdf}
    \caption{A modular lattice where the $E$-base is non-valid (Example~\ref{ex:modular-prototypical-fail}). While the non-ji essential set $abc$ is not faulty, $abcde$ is because $de$ is not a $E$-generator of $a$ or $b$.}
    \label{fig:modular-prototypical-fail}
\end{figure}
\end{example}
Example~\ref{ex:modular-prototypical-fail} is in fact a minimal representative of a configuration that characterizes the validity of the $E$-base in modular lattices: there exists a (non-ji) essential set $C$ and a predecessor $C'$ of $C$ such that $\card{C \setminus C'} > 1$.
To prove that the $E$-base of $(\U, \cl)$ is valid if and only if this situation does not occur, we will analyse how do quasi-closed, pseudo-closed and essential sets relate to $E$-generators and almost prime elements in $(\U, \cl)$.

As as a preliminary step, let us remind from Proposition~\ref{prop:prop-psi} that if $C$ is a non-ji essential set with predecessors $C_1, \dots, C_m$, then $[C_*, C]$ is a diamond and the quasi-closed spanning sets of $C$ are of the form:
\[ \bigcup_{i \in I} C_i \quad \text{with } I \subseteq \{1, \dots, m\} \text{ and } 1 < \card{I} < m. \] 
Another useful observation is that, for any non-empty $I \subseteq \{1, \dots, m\}$ we have:
\begin{equation*} 
\bigcup_{i \in I} C_i = C_* \cup \bigcup_{i \in I}^m C_i \setminus C_* 
\end{equation*} 
where the sets $C_i \setminus C_*$ are non-empty and pairwise disjoint.
We now look at the almost prime elements in $(\U, \cl)$.

\begin{lemma} \label{lem:almot-prime-modular}
Let $C \in \cs$ be non-ji essential set and let $C_1, \dots, C_m$ be the predecessors of $C$.
An element $x$ is almost prime in $(\idl C, \subseteq)$ if and only if there exists a unique $C_i$ such that $C_i \setminus C_* = \{x\}$. 
\end{lemma}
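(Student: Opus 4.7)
The plan is to exploit Proposition~\ref{prop:wild-modular}---the modular description of essential, quasi-closed, and pseudo-closed sets---to characterize almost primality of $x$ in $(\idl C, \subseteq)$ through the diamond structure of $[C_*, C]$ and the blocks $C_k \setminus C_*$.

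\emph{Backward direction.} Assume a unique $C_i$ satisfies $C_i \setminus C_* = \{x\}$. Since $x \in C_i$ but $x \notin C_*$, any $j \neq i$ with $x \in C_j$ would force $x \in C_i \cap C_j = C_*$, a contradiction; thus $C_i$ is the only predecessor of $C$ containing $x$. Because $C_i = C_* \cup \{x\}$ and $C_*$ is closed, no subset of $C_i \setminus \{x\} = C_*$ can generate $x$, so $x$ is vacuously prime in $(\idl C_i, \subseteq)$. Since $m \geq 3$ by Proposition~\ref{prop:wild-modular}, I can pick $j, k \neq i$ and form the pseudo-closed $P = C_j \cup C_k$; then $x \in C \setminus P$, and the canonical implication $P \imp C \setminus P$ yields a non-singleton minimal generator of $x$ inside $P$, so $x$ is not prime in $(\idl C, \subseteq)$. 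Hence $x$ is almost prime.

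\emph{Forward direction.} Assume $x$ is almost prime in $(\idl C, \subseteq)$. Then $x$ has a non-singleton minimal generator $Y \subseteq C$; almost primality forces $\cl(Y) = C$, because otherwise $Y$ would witness $x$ not being prime in the proper interval $(\idl \cl(Y), \subseteq)$ where primality is required. Theorem~\ref{thm:canonical} together with Proposition~\ref{prop:wild-modular} then places $Y$ inside some pseudo-closed $P = C_i \cup C_j$, so that $x \in C \setminus P = \bigsqcup_{l \neq i, j}(C_l \setminus C_*)$, and $x \in C_k \setminus C_*$ for a unique $k$. The main remaining step, and the main obstacle, is to prove $C_k \setminus C_* = \{x\}$. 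This reduces to showing $C_k$ is join-irreducible: once established, the unique predecessor of $C_k$ must coincide with $C_*$ (because $C_* \prec C_k$ already in the big lattice), yielding $C_k = C_* \cup \{x\}$. To rule out the non-ji case, I appeal to the modular diamond isomorphism, which gives $[(C_k)_*, C_k]$ the structure $M_n$ with $n \geq 2$ atoms, one of which must be $C_*$; hence $(C_k)_* \subsetneq C_*$ and in particular $x \notin (C_k)_*$. If $n \geq 3$, then $C_k$ is essential by Proposition~\ref{prop:wild-modular}, so the same analysis applied to $C_k$ in place of $C$ produces a non-singleton minimal generator of $x$ with closure $C_k$, contradicting the primality of $x$ in $(\idl C_k, \subseteq)$ guaranteed by almost primality. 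If $n = 2$, I first check that $C_k = C_* \cup D_2$ for the second predecessor $D_2$---any $z \in C_k \setminus (C_* \cup D_2)$ would have $\cl(z)$ forced to be non-ji by the diamond isomorphism, which is impossible---so $x \in D_2 \setminus (C_k)_*$ and $x$ is prime in $(\idl D_2, \subseteq)$; the same trichotomy (ji / essential / two-atom) then applies recursively to $D_2$, and finiteness of the lattice forces termination at a join-irreducible set, unwinding to $C_k = C_* \cup \{x\}$ as required.
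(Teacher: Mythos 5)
Your backward direction is correct and is essentially the paper's own argument. The forward direction, however, has two genuine gaps. The first is the step where you place the minimal generator $Y$ inside a pseudo-closed set $P = C_i \cup C_j$ and conclude $x \in C \setminus P$: Theorem~\ref{thm:canonical} is a statement about implications present in an arbitrary implicational base, not a device for locating a given generator inside a pseudo-closed set; a minimal generator of $x$ spanning $C$ may meet three or more of the disjoint blocks $C_l \setminus C_*$ and then lies in no union of two predecessors; and even granting $Y \subseteq P$, nothing forces $x \notin P$. What you actually need at this point is $x \notin C_*$, and the paper gets it from Lemma~\ref{lem:E-QC}: almost primality means $C$ is the closure of an $E$-generator of $x$ (Corollary~\ref{cor:E-spanned}), so $Q = C \setminus \{y \st x \in \cl^b(y)\}$ is a quasi-closed set spanning $C$; by Proposition~\ref{prop:wild-modular} it is a union of predecessors, hence contains $C_*$, while $x \notin Q$.

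The more serious gap is the structural trichotomy for $C_k$. Modularity does not make $[(C_k)_*, C_k]$ a diamond when $C_k$ is non-ji: the diamond isomorphism relates the transposed intervals $[D \cap D', D]$ and $[D', C_k]$, and it is precisely Proposition~\ref{prop:wild-modular} that height-two diamonds with at least three atoms characterize essential non-ji sets. The omitted case, namely $C_k$ with three or more predecessors and $[(C_k)_*, C_k]$ of height at least three, genuinely occurs under your hypotheses. Take the standard closure space on $\{p,q,r,x,d,e\}$ whose closed sets are $\emptyset$, $p$, $q$, $r$, $x$, $px$, $qx$, $rx$, $pqr$, $pqrx$, $pqrd$, $pqre$ and the whole set (so $\cl(d) = pqrd$, $\cl(e) = pqre$): its lattice is modular, the top is essential with predecessors $pqrx$, $pqrd$, $pqre$ and $C_* = pqr$, and $x$ is almost prime there; yet $C_k = pqrx$ has the four predecessors $pqr$, $px$, $qx$, $rx$, so $[(C_k)_*, C_k] = [\emptyset, pqrx]$ has height three. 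In this configuration neither of your branches applies: primality of $x$ in $(\idl C_k, \subseteq)$ only guarantees that no two predecessors of $C_k$ both avoid $x$ (their join is $C_k$), so $x$ lies in every predecessor of $C_k$ except $C_*$ and the $n \geq 3$ contradiction is unavailable, while the $n = 2$ recursion presupposes a single predecessor besides $C_*$. (Relatedly, ``show $C_k$ is ji'' is not a valid reduction: in this example $C_k$ is non-ji even though $C_k \setminus C_* = \{x\}$.) The paper avoids any induction on the structure of $C_k$: it shows directly that every $y \in C$ with $x \in \cl^b(y)$ equals $x$ --- since $C_* \jn \cl(y) = C_k$ and $C_* \prec C_k$, modularity gives $\cl(y) \cap C_* \prec \cl(y)$, and standardness makes $\cl(y) \setminus \{y\}$ the unique predecessor of the ji set $\cl(y)$, so $x \in \cl(y)$ forces $y = x$ --- whence $C_k \setminus C_* = C \setminus Q = \{x\}$ in one stroke.
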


\begin{proof}
We start with the if part.
Assume there exists a unique $C_i$ such that $C_i \setminus C_* = \{x\}$.
Since $C$ is essential and $(\cs, \subseteq)$ is modular, we have by Proposition~\ref{prop:wild-modular} that $x \notin C_j$ for every $C_j \neq C_i$.
Given that $x \in C$ and $C$ has more than $3$ predecessors, we deduce that $x$ is not prime in $(\idl C, \subseteq)$.
Moreover, $C_* = C_i \setminus \{x\}$ entails that $x$ is prime in $(\idl C_i, \subseteq)$.
We deduce that $x$ is indeed almost prime in $(\idl C, \subseteq)$.

We move to the only if part.
Assume that $x$ is almost prime in $(\idl C, \subseteq)$.
Because $(\U, \cl)$ is standard and $C$ is non-ji, there exists at least one predecessor of $C$ that contains $x$.
Without loss of generality, assume that $x \in C_1$.
By Lemma~\ref{lem:E-QC}, $Q = C \setminus \{y \st x \in \cl^b(y)\}$ is quasi-closed.
From Proposition~\ref{prop:wild-modular}, $C_* \subseteq Q$.
Hence, $y \in C_1 \setminus C_*$ for every $y \in C$ such that $x \in \cl^b(y)$.
In particular, we thus have $Q = \bigcup_{i > 1} C_i$.
As $C_1 = C_* \jn \cl(y)$ and $C_* \prec C$, modularity entails $\cl(y) \cap C_* \prec \cl(y)$.
Since $(\U, \cl)$ is standard, $\cl(y)$ must be ji.
As $x \in \cl(y)$ by definition of $Q$, $x = y$ must hold.
We conclude that $Q = C \setminus \{x\}$.
Applying the fact that the $C_i \setminus C_*$'s are pairwise disjoint, we finally obtain:
\[ 
C \setminus Q = \bigcup_i C_i \setminus \bigcup_{j > 1} C_j = C_1 \setminus C_* = \{x\}
\] 
\end{proof}

\begin{lemma} \label{lem:mod-E-closure}
For every quasi-closed set $Q$ spanning $C$, we have:
\[ 
\is_E(Q) = Q \cup \{x \st \card{C_i \setminus C_*} = \{x\}\}
\] 
\end{lemma}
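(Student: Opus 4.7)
The plan is to prove both inclusions for $R := Q \cup \{x \st x \text{ is almost prime in } (\idl C, \subseteq)\}$, where by Lemma~\ref{lem:almot-prime-modular} the almost prime elements are precisely those $x$ with $C_i \setminus C_* = \{x\}$ for some $i$. The structural ingredients used throughout are Proposition~\ref{prop:wild-modular}, giving $Q = \bigcup_{i \in I} C_i$ for some $I \subseteq \{1, \dots, m\}$ with $1 < \card{I} < m$, and the fact that $[C_*, C]$ is a diamond, so $C_i \cap C_j = C_*$ whenever $i \neq j$.

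For $R \subseteq \is_E(Q)$, the inclusion $Q \subseteq \is_E(Q)$ is immediate. For an almost prime $x$ with $\{x\} = C_i \setminus C_*$, if $i \in I$ then $x \in C_i \subseteq Q$. Otherwise the diamond identity forces $x \notin Q$, and I pick any $\cl^b$-minimal spanning set $A$ of $C$ included in $Q$; such $A$ exists because $Q$ spans $C$ and is $\cl^b$-closed (being quasi-closed with $\cl(Q) = C$ non-ji). Then $\cl^b(A) \subseteq Q$, so $x \notin \cl^b(A)$ while $x \in C = \cl(A)$, and condition~(3') of Lemma~\ref{lem:E-generator} holds because $x$ is almost prime in $(\idl C, \subseteq)$. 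Hence $A \in \gen_E(x)$, the implication $A \imp x$ lies in $\is_E$, and $x \in \is_E(Q)$.

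For $\is_E(Q) \subseteq R$, I show that $R$ is closed under $\is_E$. Binary implications $a \imp y$ in $\is_E$ with $a \in R$ pose no difficulty: if $a \in Q$, quasi-closedness gives $\cl(a) \subseteq Q$; if $a$ is almost prime with $\{a\} = C_k \setminus C_*$, then $\cl(a) \setminus C_* \subseteq C_k \setminus C_* = \{a\}$, so $\cl(a) \subseteq C_* \cup \{a\} \subseteq R$. For a non-binary implication $A \imp y$ with $A \in \gen_E(y)$ and $A \subseteq R$, if $\cl(A) = C$ then $y$ is almost prime in $(\idl C, \subseteq)$ and belongs to $R$. Otherwise $\cl(A) \subseteq C_j$ for some predecessor $C_j$, and the diamond identity collapses $R \cap C_j$ to $C_j$ when $j \in I$, and to $C_*$ or $C_* \cup \{z\}$ when $j \notin I$ (the latter only if $C_j \setminus C_* = \{z\}$).

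The main obstacle lies in this last sub-case, specifically when $A$ mixes an almost prime $z \notin Q$ with elements of $C_* \subseteq Q$. When $j \in I$, quasi-closedness of $Q$ yields $y \in \cl(A) \subseteq C_j \subseteq Q$. When $j \notin I$ and $A \subseteq C_*$, then $y \in C_* \subseteq Q$. The delicate case is $A = A' \cup \{z\}$ with $A' \subseteq C_*$ and $z \in A$: here I exploit the defining property $y \notin \cl^b(A)$ of an $E$-generator together with $y \in \cl(A) \subseteq C_j$. If $y \notin C_*$, then $y \in C_j \setminus C_* = \{z\}$ forces $y = z$, which contradicts $z \in A$ and $y \notin A$. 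Hence $y \in C_* \subseteq Q \subseteq R$ in every case, completing the proof that $R$ is $\is_E$-closed.
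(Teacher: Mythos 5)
Your proof is correct and takes essentially the same route as the paper's: the $\supseteq$ inclusion via Lemma~\ref{lem:almot-prime-modular} together with the $E$-generator criterion (Lemma~\ref{lem:E-generator}) applied to a $\cl^b$-minimal spanning set contained in the $\cl^b$-closed set $Q$, and the $\subseteq$ inclusion by showing the right-hand side is $\is_E$-closed, using the same diamond/disjointness observation that forces the conclusion of any offending implication into $C_*$ or into a singleton $C_j \setminus C_*$. Your case analysis is a bit more explicit than the paper's (you handle the binary implications and the case $\cl(A) = C$ separately, and argue directly rather than by contradiction), but the substance is identical.
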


\begin{proof}
We show double-inclusion and start with the $\supseteq$ part.
We readily have $Q \subseteq \is_E(Q)$.
Now let $x \in C$ be such that $\{x\} = C_i \setminus C_*$ for some predecessor $C_i$ of $C$.
By Lemma~\ref{lem:almot-prime-modular}, $x$ is almost prime in $(\idl C, \subseteq)$, which makes any $\cl^b$-minimal spanning set of $C$ whose closure via $\cl^b$ does not contain $x$ a $E$-generator of $x$ by Corollary~\ref{cor:E-min-span-set}.
As $Q$ is $\cl^b$-closed and $x \notin Q$, it includes such a $\cl^b$-minimal spanning set of $C$ and $x \in \is_E(Q)$ follows.

We move to the $\subseteq$ part, which we prove using contradiction.
Suppose that $Q' = Q \cup \{x \st \card{C_i \setminus C_*} = \{x\}\}$ is not $\is_E$-closed.
Then, there exists $y \in C \setminus Q'$ and a $E$-generator $A$ of $y$ such that $A \subseteq Q'$ and $y \notin Q'$.
By Proposition~\ref{prop:wild-modular}, there exists a (unique) predecessor $C_j$ of $C$ such that $y \in C_j \setminus C_*$.
According to the previous paragraph, it must be that $\cl(A) \subseteq C_j$ as otherwise $\cl(A) = C$ which would imply $y \in \{x \st \card{C_i \setminus C_*} = \{x\}\}$.
Moreover, $y \notin Q'$ implies $y \notin C_*$ and hence $A \cap (C_j \setminus C_*) \neq \emptyset$.
However, $C_j \nsubseteq Q$ entails $Q \cap (C_j \setminus C_*) = \emptyset$ again by Proposition~\ref{prop:wild-modular}.
We deduce that there exists $z \in A \cap (C_j \setminus C_*)$, $z \neq y$, such that $z \in Q' \setminus Q$.
But $z \in Q' \cap (C_j \setminus C_*)$ entails $C_j \setminus C_* = \{z\}$ by definition of $Q'$, a contradiction with $y \neq z$ and $y \in C_j \setminus C_*$.
We deduce that $Q'$ is indeed $\is_E$-closed, which concludes the proof.
\end{proof}

We thus obtain the subsequent characterization of faulty pseudo-closed sets.

\begin{lemma} \label{lem:modular-faulty}
Let $C$ be a non-ji essential set with predecessors $C_1, \dots, C_m$.
Then a pseudo-closed set $P = C_i \cup C_j$ spanning $C$ is faulty if and only if there exists some $C_k \neq C_i, C_j$ such that $\card{C_k \setminus C_*} > 1$.
\end{lemma}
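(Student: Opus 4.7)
The plan is to derive the characterization as a direct computation on top of Lemma~\ref{lem:mod-E-closure}, once we have pinned down the shape of pseudo-closed spanning sets of $C$.

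First, I would fix the structural picture. Since $C$ is a non-ji essential set, Proposition~\ref{prop:wild-modular} forces $[C_*, C]$ to be a diamond with atoms $C_1, \dots, C_m$ ($m \geq 3$), and the quasi-closed spanning sets of $C$ are exactly the unions $\bigcup_{i \in I} C_i$ with $1 < \card{I} < m$. Inclusion-wise minimal among these are the sets $C_i \cup C_j$ with $i \neq j$, so every pseudo-closed set spanning $C$ has this form, justifying that the hypothesis $P = C_i \cup C_j$ is without loss of generality. I would also record the disjoint-union decomposition $C = C_* \cup \bigsqcup_{k=1}^m (C_k \setminus C_*)$ (the parts $C_k \setminus C_*$ are non-empty and pairwise disjoint by the diamond structure of $[C_*, C]$), which has already been used in the discussion preceding Lemma~\ref{lem:almot-prime-modular}.

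Next, I would apply Lemma~\ref{lem:mod-E-closure} to $P$ itself: since $P$ is quasi-closed and spans $C$,
\[
\is_E(P) \;=\; P \,\cup\, \{x \in C : C_l \setminus C_* = \{x\} \text{ for some } l\}.
\]
By definition $P$ is faulty precisely when $\is_E(P) \neq C$, i.e.\ when $C \setminus \is_E(P) \neq \emptyset$. Using $P = C_* \cup (C_i \setminus C_*) \cup (C_j \setminus C_*)$ and the disjoint decomposition of $C$, we have $C \setminus P = \bigcup_{k \neq i,j} (C_k \setminus C_*)$, so
\[
C \setminus \is_E(P) \;=\; \bigcup_{k \neq i,j} (C_k \setminus C_*) \;\setminus\; \{x : C_l \setminus C_* = \{x\} \text{ for some } l\}.
\]

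Finally, I would analyze this difference factor by factor. Fix $k \neq i,j$. If $\card{C_k \setminus C_*} = 1$, say $C_k \setminus C_* = \{x\}$, then $x$ belongs to the absorbing set (with $l = k$), so $(C_k \setminus C_*)$ vanishes from the difference. If $\card{C_k \setminus C_*} > 1$, then by pairwise disjointness of the $C_l \setminus C_*$'s, no element $y \in C_k \setminus C_*$ can satisfy $C_l \setminus C_* = \{y\}$ for any $l$ (it would force $l = k$ and then $\card{C_k \setminus C_*} = 1$). Thus $(C_k \setminus C_*)$ survives entirely in $C \setminus \is_E(P)$. Combining, $C \setminus \is_E(P) \neq \emptyset$ if and only if there exists $k \neq i,j$ with $\card{C_k \setminus C_*} > 1$, which is the claim.

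I do not expect a serious obstacle: all the real work is already absorbed in Proposition~\ref{prop:wild-modular} (shape of quasi-closed spanning sets and the partition of $C$) and in Lemma~\ref{lem:mod-E-closure} (computation of the $E$-closure). The only point requiring a bit of care is the disjointness argument handling the case $\card{C_k \setminus C_*} > 1$, since one must rule out that an element of $C_k \setminus C_*$ accidentally appears as the unique element of some other $C_l \setminus C_*$; this is precisely where the pairwise disjointness of the parts $C_l \setminus C_*$ is essential.
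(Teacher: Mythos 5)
Your proposal is correct and follows essentially the same route as the paper's proof: both reduce the claim to the computation of $\is_E(P)$ in Lemma~\ref{lem:mod-E-closure} and then exploit the decomposition of $C$ into $C_*$ and the pairwise disjoint, non-empty parts $C_k \setminus C_*$ coming from Proposition~\ref{prop:wild-modular}. Your version merely spells out the case analysis (the disjointness argument ruling out that an element of a large part is the unique element of another part) that the paper's proof states more tersely.
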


\begin{proof}
The pseudo-closed set $P$ is faulty iff there exists $x \in C \setminus P$ such that $x \notin \is_E(P)$.
By Lemma~\ref{lem:mod-E-closure}, $\is_E(P) = P \cup \{x : \card{C_i \setminus C_*} = \{x\}\}$.
Thus $x \in C \setminus P$ and $x \notin \is_E(P)$ is in turn equivalent to the fact that there exists a (unique) predecessor $C_k \neq C_i, C_j$ of $C$ such that $x \in C_k \setminus C_*$ and $C_k \setminus C_* \neq \{x\}$, i.e., $\card{C_k \setminus C_*} > 1$ as $C_k \setminus C_*$ is non-empty. 
\end{proof}

We finally derive Theorem~\ref{thm:modular-Ebase} as a direct corollary of Lemma~\ref{lem:modular-faulty}.

\ifx\arxiv\undefined
\begingroup
\def\thetheorem{\ref{thm:modular-Ebase}}
\begin{theorem}
The (aggregated) $E$-base of a standard closure space with modular lattice is valid if and only if for every essential set $C$ and any predecessor $C'$ of $C$, $\card{C' \setminus C_*} = 1$, where $C_*$ is the intersection of the predecessors of $C$.
\end{theorem}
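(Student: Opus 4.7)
The plan is to derive Theorem~\ref{thm:modular-Ebase} directly from Lemma~\ref{lem:modular-faulty}, which already pins down the combinatorial condition characterizing faulty pseudo-closed sets in modular closure lattices. First I would recall that the $E$-base is valid if and only if no essential set is faulty, which, by Theorem~\ref{thm:canonical} combined with the convention that ji essential sets are never faulty, reduces to asking that no pseudo-closed set spanning a non-ji essential set be faulty. By Proposition~\ref{prop:wild-modular}, the pseudo-closed sets spanning a non-ji essential $C$ with predecessors $C_1, \dots, C_m$ are precisely the unions $C_i \cup C_j$ with $i \neq j$, so it is exactly these that need to be checked.

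For the backward implication, assuming $\card{C' \setminus C_*} = 1$ for every predecessor $C'$ of every (non-ji) essential set $C$, Lemma~\ref{lem:modular-faulty} shows immediately that no pseudo-closed set $C_i \cup C_j$ is faulty: the only possible obstruction is the existence of a third predecessor $C_k$ with $\card{C_k \setminus C_*} > 1$, which is ruled out by hypothesis. Hence no pseudo-closed set is faulty and the $E$-base is valid.

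For the forward implication, I would argue by contrapositive. Suppose there exists a (non-ji) essential set $C$ having a predecessor $C_k$ with $\card{C_k \setminus C_*} > 1$. Proposition~\ref{prop:wild-modular} guarantees that $[C_*, C]$ is a diamond with at least three join-irreducibles, hence $C$ has at least three predecessors. I can therefore pick two further predecessors $C_i, C_j$ distinct from $C_k$. Then $P = C_i \cup C_j$ is a pseudo-closed set spanning $C$, and Lemma~\ref{lem:modular-faulty} flags $P$ as faulty, contradicting validity of the $E$-base.

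There is no serious obstacle left at this stage, since the heavy lifting has already been done in Lemmas~\ref{lem:almot-prime-modular}, \ref{lem:mod-E-closure} and \ref{lem:modular-faulty}. The only substantive point is ensuring that at least three predecessors exist, so that the pair $C_i, C_j \neq C_k$ can be selected; this comes for free from the modular diamond structure guaranteed by Proposition~\ref{prop:wild-modular}.
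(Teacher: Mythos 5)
Your proposal is correct and follows essentially the same route as the paper, which likewise obtains Theorem~\ref{thm:modular-Ebase} as a direct corollary of Lemma~\ref{lem:modular-faulty}, using Proposition~\ref{prop:wild-modular} for the form of the pseudo-closed sets and the standing convention that ji essential sets are never faulty. Spelling out the two directions, including the availability of a third predecessor guaranteed by the diamond structure of $[C_*, C]$, is exactly the intended argument.
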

\addtocounter{theorem}{-1}
\endgroup
\else
\THMmodular*
\fi

We highlight two consequences of Lemma~\ref{lem:almot-prime-modular} and Theorem~\ref{thm:modular-Ebase}.
First, one can observe that in Example~\ref{ex:modular-prototypical-fail}, even though the $E$-base is not valid, all the essential sets are spanned by some $E$-generator.
However, by taking a diamond and gluing to each of its join-irreducible elements another diamond, thus repeating the characteristic situation of Theorem~\ref{thm:modular-Ebase}, we will obtain an essential set not spanned by any $E$-generator.
This is Example~\ref{ex:fail-mod} below.

\begin{example} \label{ex:fail-mod}
Consider the closure space $(\U, \cl)$ associated to the lattice of Figure \ref{fig:fail-mod}.
The lattice is modular.
The canonical and $E$-base are given by:

\begin{align*}
\is_{DG} & = \{ \agset{d \imp ab}, \agset{e \imp ab}, \agset{f \imp ac}, \agset{g \imp ac},
    \agset{h \imp bc}, \agset{i \imp bc} \} & (\text{binary}) \\ 
& \cup \{\agset{abcd \imp e}, \agset{abce \imp d}, \agset{abde \imp c}\} & (\agset{abcde}) \\ 
& \cup \{\agset{abcf \imp g}, \agset{abcg \imp f}, \agset{acfg \imp b}\} & (\agset{abcfg}) \\
& \cup \{\agset{abch \imp i}, \agset{abci \imp h}, \agset{bchi \imp a}\} & (\agset{abchi}) \\
& \cup \{\agset{abcdefg \imp hi}, \agset{abcdehi \imp fg}, \agset{abcfghi \imp de}\} & (\agset{abcdefghi}) \\
& & \\
\is_{E} & = \{ \agset{d \imp ab}, \agset{e \imp ab}, \agset{f \imp ac}, \agset{g \imp ac},
    \agset{h \imp bc}, \agset{i \imp bc} \} & (\text{binary}) \\ 
& \cup \{\agset{cd \imp e}, \agset{ce \imp d}, \agset{de \imp c}\} & (\agset{abcde}) \\ 
& \cup \{\agset{bf \imp g}, \agset{bg \imp f}, \agset{fg \imp b}\} & (\agset{abcfg}) \\
& \cup \{\agset{ah \imp i}, \agset{ai \imp h}, \agset{hi \imp a}\} & (\agset{abchi}) \\
\end{align*}

While the essential sets $\agset{abcde}$, $\agset{abcde}$, $\agset{abcfg}$ and $\agset{abchi}$ are not faulty, there is no $E$-generator to span the essential set $\agset{abcdefghi}$, since none of the elements $d$, $e$, $f$, $g$, $h$ and $i$ are almost prime in $(\cs, \subseteq)$.
Connecting to Theorem~\ref{thm:modular-Ebase}, there is 4 elements of difference between $\agset{abcdefghi}$ and each of its predecessors.
\begin{figure}[ht!]
    \centering
    \includegraphics[scale=\FIGmod]{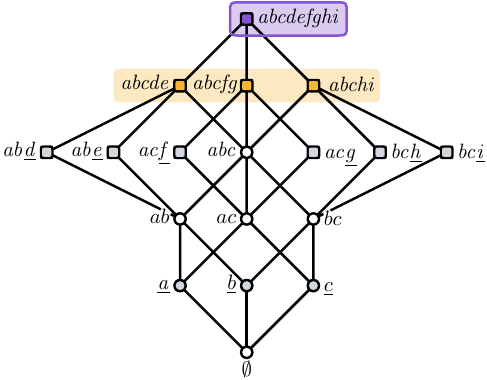}
    \includegraphics[width=\textwidth, page=2]{figures/captions.pdf}
    \caption{A modular closure lattice where the essential set $abcdefghi$ is not spanned by  any $E$-generator. 
    This makes the $E$-base non valid.}
    \label{fig:fail-mod}
\end{figure}
\end{example}

The second consequence deals with the case where $(\cs, \subseteq)$ is atomistic and modular, or equivalently geometric and modular.
In this case, $(\cs, \subseteq)$ is also coatomistic, and $C_* = \emptyset$ for every $C \in \cs$.
By Proposition~\ref{prop:wild-modular}, each essential set must thus be of height $2$ in $(\cs, \subseteq)$.
This makes all essential sets incomparable. 
Using Theorem~\ref{thm:modular-Ebase} (or Corollary~\ref{cor:incomparable-valid}), we deduce:
\begin{corollary} \label{cor:atmod-valid}
The $E$-base of a closure space with atomistic modular lattice is valid and equals its canonical base.
\end{corollary}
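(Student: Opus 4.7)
The plan is to reduce the corollary to Corollary~\ref{cor:incomparable-valid} by showing that in an atomistic modular closure lattice every non-ji essential set has height exactly $2$, so that all such essential sets are pairwise incomparable.

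First I would establish that $C_* = \emptyset$ for every non-ji $C \in \cs$. The interval $[\emptyset, C]$ inherits modularity (as an interval of a modular lattice) and atomicity (every $C' \subseteq C$ is a join of atoms $\leq C$, which remain atoms of the subinterval), so it is itself a modular geometric lattice whose coatoms are exactly the predecessors of $C$ in $\cs$. Hence $C_*$ coincides with the meet of all coatoms of $[\emptyset, C]$. I would then invoke the standard fact that in a non-trivial modular atomistic lattice the meet of all coatoms is the bottom: modularity gives relative complementation, so every atom $a$ admits a complement that must be a coatom, ensuring no atom lies below every coatom. This forces $C_* = \emptyset$.

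Next I would apply Proposition~\ref{prop:wild-modular}: any non-ji essential set $C$ has $[C_*, C]$ isomorphic to a diamond. Combined with $C_* = \emptyset$, this means $C$ has height exactly $2$ in $(\cs, \subseteq)$. Since two distinct closed sets of the same height are incomparable, all non-ji essential sets form an antichain. I would then conclude by invoking Corollary~\ref{cor:incomparable-valid}, which in the atomistic setting gives both the validity of the $E$-base and its coincidence with the canonical base.

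The main obstacle I anticipate is the first step, namely rigorously justifying $C_* = \emptyset$ via the existence of coatom complements in the modular geometric interval $[\emptyset, C]$. A cleaner alternative would bypass Corollary~\ref{cor:incomparable-valid} and verify the criterion of Theorem~\ref{thm:modular-Ebase} directly: once $C_*$ is the bottom, every predecessor $C' \prec C$ of a height-$2$ essential set $C$ is an atom, hence by standardness and atomicity $C' = \{x\}$ for some $x \in \U$, and $\card{C' \setminus C_*} = 1$ holds trivially.
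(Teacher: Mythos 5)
Your argument is correct and follows essentially the paper's own route: establish $C_* = \emptyset$ for the relevant intervals (the paper merely asserts coatomisticity, you supply the complementation detail via the modular geometric interval $[\emptyset, C]$), use Proposition~\ref{prop:wild-modular} to see every non-ji essential set has height $2$ and hence all are incomparable, and conclude by Corollary~\ref{cor:incomparable-valid}, whose atomistic case also yields the coincidence with the canonical base. One small caveat: your alternative ending via Theorem~\ref{thm:modular-Ebase} alone would only give validity, not the equality with the canonical base, so the route through Corollary~\ref{cor:incomparable-valid} is the one to keep.
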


In particular, given that the lattice of linear subspaces of a projective space is atomistic and modular---equivalently modular and geometric---\cite[Theorem 424]{gratzer2011lattice}, Corollary~\ref{cor:atmod-valid} specifies to:
\begin{corollary} \label{cor:proj}
The lattice of linear subspaces of a projective space has valid $E$-base.
\end{corollary}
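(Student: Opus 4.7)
The plan is to obtain this as an immediate specialization of Corollary~\ref{cor:atmod-valid}. The only thing to verify is that the lattice of linear subspaces of a projective space falls into the class covered by that corollary, namely the class of atomistic modular lattices.

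First, I would recall the standard fact (cited from \cite[Theorem 424]{gratzer2011lattice}) that the lattice of linear subspaces of a projective space is simultaneously atomistic and modular: atomisticity comes from the fact that every subspace is the join of the points (rank-one subspaces) it contains, and modularity is the classical dimension identity $\dim(U \jn V) + \dim(U \cap V) = \dim U + \dim V$ for linear subspaces, which implies the modular law. Since the corollary is stated for finite closure lattices and the statement of Corollary~\ref{cor:proj} should be read in the finite setting consistent with the conventions of the paper, I would either restrict to finite projective spaces outright or, equivalently, to the finite-dimensional case where the lattice of subspaces is finite height and the arguments of Section~\ref{sec:preliminaries} apply.

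Having identified the lattice as atomistic modular, the proof reduces to a single citation: applying Corollary~\ref{cor:atmod-valid}, the associated closure space has valid $E$-base, and moreover this $E$-base coincides with the canonical base. The main obstacle, such as it is, is purely an exposition issue — making explicit that our finiteness convention applies — because the structural content (atomistic $+$ modular $\Rightarrow$ every non-ji essential set has height $2$ and hence all essential sets are pairwise incomparable, so Theorem~\ref{thm:matroid-valid} / Corollary~\ref{cor:incomparable-valid} kicks in) has already been done in the proof of Corollary~\ref{cor:atmod-valid}. No further calculation is required.
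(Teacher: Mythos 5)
Your proposal is correct and matches the paper's own argument: the paper likewise cites \cite[Theorem 424]{gratzer2011lattice} for the fact that the subspace lattice of a projective space is atomistic and modular (equivalently modular and geometric) and then invokes Corollary~\ref{cor:atmod-valid}. Your remark about the finiteness convention is a reasonable clarification but adds nothing beyond the paper's standing assumption that all objects are finite.
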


\section{Geometric lattices} \label{sec:geometric}

In this section, we characterize geometric lattices with valid $E$-base.
First, we illustrate that geometric lattices might not have valid $E$-base in general.

\begin{example} \label{ex:fullfail-geom}
Consider the geometric closure lattice $(\cs, \subseteq)$ of Figure~\ref{fig:fullfail-geom}.
It is the direct product of two diamonds, $abc$ and $\mathit{def}$, but truncated at height $2$.
We have:

\begin{align*}
\is_{DG} & = \{\mathit{ab \imp c}, \mathit{ac \imp b}, \mathit{bc \imp a}\} & (\mathit{abc}) \\ 
& \cup \{\mathit{de \imp f}, \mathit{df \imp e}, \mathit{ef \imp e}\} & (\mathit{def}) \\
& \cup \{\mathit{abcd \imp ef}, \mathit{abce \imp df}, \mathit{abcf \imp de}, \mathit{adef \imp bc}, \mathit{bdef \imp ac}, \mathit{cdef \imp ab} \} & (\mathit{abcdef}) \\ 
& & \\ 
\is_E & = \{\mathit{ab \imp c}, \mathit{ac \imp b}, \mathit{bc \imp a}\} & (\mathit{abc}) \\ 
& \cup \{\mathit{de \imp f}, \mathit{df \imp e}, \mathit{ef \imp e}\} & (\mathit{def})
\end{align*}
Hence, $\mathit{abcdef}$ is essential, but not spanned by any $E$-generator as none of the elements are almost prime. 
This makes this essential set, as well as the pseudo-closed sets, faulty.
\begin{figure}[ht!]
    \centering
    \includegraphics[scale=\FIGgeomfull]{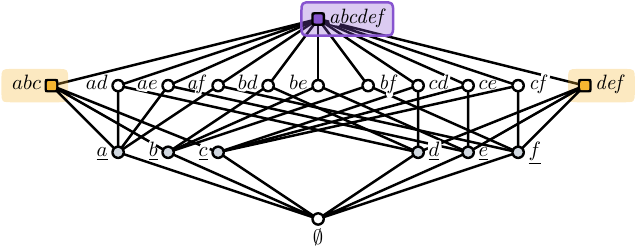}
    \includegraphics[width=\textwidth, page=2]{figures/captions.pdf}
    \caption{A geometric closure lattice where $E$-base is not valid as $\mathit{abcdef}$ is not spanned by any $E$-generator.}
    \label{fig:fullfail-geom}
\end{figure}
\end{example}
We will prove that the $E$-base of a geometric lattice is in fact valid if and only if all its essential sets are incomparable.
Note that the if part already follows from Corollary~\ref{cor:incomparable-valid}.
Now, let us fix some exchange closure space $(\U, \cl)$ that admits comparable essential sets.
It is sufficient to place ourselves in the case where $\U$ is essential and all other essential sets are incomparable.
If $\cc{M} = (\U, \cc{I})$ is the matroid associated to $(\U, \cl)$, recall that $\cc{B}$ denotes its bases and $\cc{K}$ its circuits.
Let $b$ be the size of a base.

We argue that the $E$-base of $(\U, \cl)$ is not valid because $\U$ is faulty.
We may assume that there are some almost prime elements in $(\cs, \subseteq)$ as otherwise the result is clear.
Let us observe though that such almost prime elements can indeed exist, as illustrated by Example~\ref{ex:fail-geom} below.

\begin{example} \label{ex:fail-geom}
Consider the closure space $(\U, \cl)$ corresponding to the lattice $(\cs, \subseteq)$ of Figure~\ref{fig:fail-geom}.
The lattice is geometric, and the canonical base and the $E$-base read as follows:
\begin{align*}
\is_{DG} & = \{\mathit{ab \imp c}, \mathit{ac \imp b}, \mathit{bc \imp a}\} & (\mathit{abc}) \\ 
& \cup \{\mathit{abcd \imp e}, \mathit{abce \imp d}, \mathit{ade \imp bc}, \mathit{bde \imp ac}, \mathit{cde \imp ab}\} & (\mathit{abcde}) \\ 
& & \\ 
\is_{E} & = \{\mathit{ab \imp c}, \mathit{ac \imp b}, \mathit{bc \imp a}\}\} & (\mathit{abc}) \\
& \cup \{\mathit{abd \imp e}, \mathit{abe \imp d}, \mathit{acd \imp e}, \mathit{ace \imp d}, \mathit{bcd \imp e}, \mathit{bce \imp d}\} & (\mathit{abcde})
\end{align*}
Notice that both existing essential sets $\agset{abcde}$ and $\agset{abc}$ are spanned by some $E$-generator.
However, the implications $\agset{ade \imp bc}$, $\agset{bde \imp ac}$ and $\agset{cde \imp ab}$ do not follow from the $E$-base as the corresponding pseudo-closed sets are not subsumed by any $E$-generator.
This makes $\agset{abcde}$ faulty and the $E$-base of $(\U, \cl)$ not valid.
\begin{figure}[ht!]
    \centering
    \includegraphics[scale=\FIGgeom]{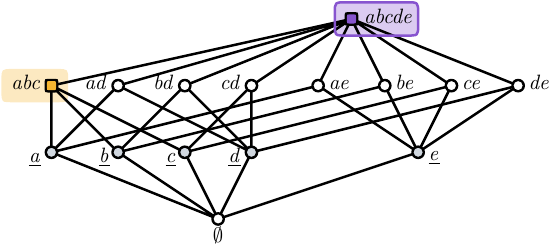}
    \includegraphics[width=\textwidth, page=2]{figures/captions.pdf}
    \caption{A geometric closure lattice where $E$-base is not valid.
    The pseudo-closed sets $\agset{ade}$, $\agset{bde}$ and $\agset{cde}$ are not captured by the $E$-base.}
    \label{fig:fail-geom}
\end{figure}
\end{example}

Hence, let $A = \{a_1, \dots, a_k\}$ be the set of almost prime elements of $(\cs, \subseteq)$.
The subsequent remark will be useful. 
It is a consequence of the fact that since $a$ is almost prime, any of its minimal generator must span $\U$.
\begin{remark} \label{rem:matroid-AP}
The minimal generators of any almost prime element $a$ are precisely the bases of $\cc{M}$ that do not contain $a$.
\end{remark}

We first prove that there exists a base $B^*$ that contains as much almost prime elements as possible.

\begin{lemma}
There exists a base $B^*$ such that $\card{B^* \cap A} = \min \{b, \card{A}\}$.
\end{lemma}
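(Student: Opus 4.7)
The plan is to start from a base that maximizes $\card{B\cap A}$ and use Remark~\ref{rem:matroid-AP} as the engine of a one-step exchange argument. Precisely, pick $B^*\in\cc{B}$ with $\card{B^*\cap A}$ maximum; assuming for contradiction that $\card{B^*\cap A}<\min\{b,\card{A}\}$, I will exhibit a base with strictly more almost prime elements, contradicting maximality.

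From the assumption, there exist $a\in A\setminus B^*$ (since $\card{B^*\cap A}<\card{A}$) and $x\in B^*\setminus A$ (since $\card{B^*\cap A}<b=\card{B^*}$). Now invoke Remark~\ref{rem:matroid-AP}: because $a$ is almost prime and $B^*$ is a base not containing $a$, $B^*$ is a minimal generator of $a$, which via the correspondence between minimal generators and circuits recalled in the preliminaries ($\gen(a)=\{K\setminus\{a\}\st K\in\cc{K},\,a\in K\}$) means that $B^*\cup\{a\}$ is itself a circuit of $\cc{M}$, of size $b+1$. Since every proper subset of a circuit is independent, $(B^*\cup\{a\})\setminus\{x\}=(B^*\setminus\{x\})\cup\{a\}$ is an independent set of cardinality $b$, hence a base; and it contains $(B^*\cap A)\cup\{a\}$, contradicting the maximality of $\card{B^*\cap A}$.

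I do not expect a real obstacle here. The decisive ingredient is Remark~\ref{rem:matroid-AP}, which strengthens the usual base-exchange step (``some specific $x$ on the fundamental circuit of $a$ can be swapped out'') to the much stronger ``every $x\in B^*$ may be swapped out'', because the fundamental circuit of $a$ with respect to $B^*$ is forced to be all of $B^*\cup\{a\}$. This is exactly what lets me pick $x$ in $B^*\setminus A$ freely. The only mild care needed is to cover both regimes uniformly: if $\card{A}\leq b$ the conclusion amounts to $A\subseteq B^*$, while if $\card{A}>b$ it amounts to $B^*\subseteq A$, but the $\min\{b,\card{A}\}$ formulation handles both at once.
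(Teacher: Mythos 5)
Your proof is correct and follows essentially the same route as the paper: both hinge on Remark~\ref{rem:matroid-AP} to see that a base $B$ missing an almost prime element $a$ is a minimal generator of $a$, so $B\cup\{a\}$ is a circuit, and then exchange an arbitrary $x\in B\setminus A$ for $a$. The only cosmetic differences are that you phrase it as a maximality/contradiction argument where the paper iterates the improvement step, and you certify that $(B^*\setminus\{x\})\cup\{a\}$ is a base via proper subsets of circuits being contained in bases, whereas the paper argues it is a minimal generator of $x$ spanning $\U$ of size $b$; both are sound.
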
 

\begin{proof}
Let $B$ be any base
If it satisfies the requirement, then $B^*=B$, otherwise, consider $m = \card{B \cap A} < b,|A|$.
We show that there exists a base $B'$ such that $\card{B' \cap A} = m+1$. 
From the definition of $B$, we deduce that $A \nsubseteq B$ and $B \nsubseteq A$.
Let $a \in A \setminus B$ and $x \in B \setminus A$.
We have $a \in \cl(B)$ as $B$ spans $\U$.
Moreover, $B$ is a base of $\cc{M}$ and $a$ is almost prime in $(\cs, \subseteq)$.
We deduce that $B$ is a $E$-generator of $a$ and hence a minimal generator of $a$.
In particular, there exists a circuit $K$ such that $B = K \setminus \{a\}$.
Consider $B' = (K \setminus \{x\}) \cup \{a\}$.
We have that $B'$ is a minimal generator of $x$ by the exchange property of $\cl$.
Therefore, $B \subseteq \cl(B')$ and hence $\cl(B') = \cl(B) = \U$.
Given that all minimal spanning sets of $\U$---all bases of $\cc{M}$---must have size $b$ and $\card{B} = \card{B'}$, we deduce that $B'$ is a base of $\cc{M}$.
Moreover $\card{B' \cap A} = m + 1$ by construction.
Applying this argument repeatedly, we will reach a base $B^*$ that satisfies the statement.
\end{proof}

It remains to prove that such a base $B^*$ can be used to exhibit a spanning set of $\U$ that is closed with respect to the $E$-base.

\begin{lemma} \label{lem:matroid-AP}
Let $Y = \max_\subseteq \{A, B^*\}$.
Then $\is_E(Y) \subset \cl(Y) = \U$.
\end{lemma}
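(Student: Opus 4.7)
The plan is to first note $\cl(Y) = \U$, which is immediate since $B^* \subseteq Y$ is a base of $\cc{M}$. The substance is the strict inclusion $\is_E(Y) \subsetneq \U$. My strategy is to identify $\is_E(Y)$ with the quasi-closure $Q$ of $Y$---the inclusion-smallest quasi-closed subset of $\U$ containing $Y$---and then to argue $Q \subsetneq \U$.

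For the identification, observe that since $(\U, \cl)$ is atomistic, $\is^b$ is empty and every implication of $\is_E$ has the form $B \imp x$ with $B \in \gen_E(x)$. The standing assumption that non-$\U$ essential sets are pairwise incomparable, together with Remark~\ref{rem:matroid-AP}, splits these into two cases: either $x \in A$ and $B$ is a base of $\cc{M}$---in which case $x \in A \subseteq Y \subseteq Q$ by the construction of $Y$---or $x \notin A$ and $\cl(B) \subsetneq \U$ is a non-$\U$ essential set, so the quasi-closedness of $Q$ forces $\cl(B) \subseteq Q$, and $x \in Q$. Hence $Q$ is $\is_E$-closed, so $\is_E(Y) \subseteq Q$. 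For the reverse inclusion, each $X \subseteq Y$ with $\cl(X) \subsetneq \U$ gives a restricted closure space $(\cl(X), \cl|_{\cl(X)})$ whose non-ji essentials are still pairwise incomparable; by Corollary~\ref{cor:incomparable-valid}, its $E$-base is valid. A short check---using that if $B$ is closure-minimal among the minimal generators of $x$ lying inside $\cl(X)$ then it is also closure-minimal in the full lattice, since any candidate beating it would itself lie in $\cl(X)$---shows that every restricted $E$-implication is already an implication of $\is_E$. Hence $\cl(X) \subseteq \is_E(X) \subseteq \is_E(Y)$, and iterating this over the quasi-closure construction gives $Q \subseteq \is_E(Y)$, so $\is_E(Y) = Q$.

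The core remaining task is to prove $Q \subsetneq \U$, which is the main obstacle. I would argue by contradiction: if $Q = \U$, the iterative quasi-closure starting at $Y$ eventually adds every element of $\U \setminus Y$, and in particular at the last step some non-$\U$ essential set $C^*$ is spanned by the previous iterate via a basis $B$ that introduces a fresh element $z \in C^* \setminus Q_{\text{prev}}$. Tracing each element of $B$ back through the chain of earlier-added non-$\U$ essentials---pairwise incomparability preventing different essentials from feeding each other in uncontrolled ways---until they ground in $B^*$, and combining the matroid exchange axiom with Remark~\ref{rem:matroid-AP} (every minimal generator of an almost prime element is a base), the goal is to construct a base $B'$ of $\cc{M}$ with $\card{B' \cap A}$ strictly exceeding $\card{B^* \cap A} = \min\{b, \card{A}\}$, contradicting the maximality of $B^*$ established in the previous lemma. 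Making this ``unwinding'' argument rigorous through nested essentials---and handling the degenerate regime where maximality is trivial---is the principal technical challenge of the proof.
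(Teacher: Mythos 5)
Your reduction is correct and genuinely different from the paper's route: in this atomistic setting with all proper essential sets pairwise incomparable, your two observations---that the smallest quasi-closed set $Q \supseteq Y$ is $\is_E$-closed (every $E$-implication $B \imp x$ with $x \notin A$ has $\cl(B) \subsetneq \U$, so quasi-closedness absorbs it, while $x \in A$ is already in $Y$), and that conversely every proper closure $\cl(X)$ of a subset of $\is_E(Y)$ is recovered inside $\is_E(Y)$ via Corollary~\ref{cor:incomparable-valid} applied to $(\idl \cl(X), \subseteq)$ together with the fact that restricted $E$-implications remain $E$-implications---do establish $\is_E(Y) = Q$. But this only reformulates the lemma: its entire content is precisely that $Q \neq \U$, and that part you do not prove. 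Your sketch assumes $Q = \U$ and hopes to ``unwind'' the quasi-closure iteration into a base $B'$ with $\card{B' \cap A} > \min\{b, \card{A}\}$; no mechanism for this construction is given (tracing premises back through nested essentials is exactly where the difficulty sits), the target quantity is the one the preceding lemma shows is already maximal for any base, so it is unclear how any exchange-style bookkeeping could exceed it, and you yourself flag this step as ``the principal technical challenge.'' As it stands, the core of the lemma is left open.

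For comparison, the paper never passes through the quasi-closure and instead argues by cases on which of $A$, $B^*$ equals $Y$. If $Y = A$, it shows $A$ itself is $\is_E$-closed: a non-binary $E$-implication $K \setminus x \imp x$ with $x \notin A$ forces $\cl(K) \subsetneq \U$, and then Remark~\ref{rem:matroid-AP} (minimal generators of almost prime elements are bases, hence span $\U$) forces $K \cap A = \emptyset$, so no such implication fires on $A$. If $Y = B^*$, the base exchange axiom produces $y \notin B^*$ such that $B^*$ is a minimal generator of $y$, and then one shows $y \notin \is_E(B^*)$: any minimal forward-chaining derivation of $y$ from $B^*$ within $\is_{cd}$ must use an implication $A' \imp x$ with $a \in A'$ for some $a \in A$ and $x \notin A$; by the exchange property $(A' \setminus \{a\}) \cup \{x\}$ is a minimal generator of $a$, hence a base by Remark~\ref{rem:matroid-AP}, giving $\cl(A') = \U$, so $A' \imp x \notin \is_E$ because $x$ is not almost prime. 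Supplying a concrete argument of this kind---either this case split, or a direct proof that such a $y$ never enters your quasi-closure---is what your proposal is missing; without it the strict inclusion $\is_E(Y) \subset \U$ is not established.
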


\begin{proof}
We consider the two cases.
Assume first that $Y = A$.
Observe that $Y$ is not closed as $\U$ contains non-almost prime elements by assumption and $\cl(Y) = \U$.
We argue that $Y$ is $\is_E$-closed.
Let $K$ be a circuit and let $x \in K$ such that $K \setminus x \imp x \in \is_E$.
If $x \in A$, then $Y = A$ satisfies the implication $K \setminus x \imp x$.
Hence, assume $x \notin A$.
Then, $x$ is not almost prime in $(\cs, \subseteq)$.
Therefore, $K \setminus x \imp x \in \is_E$ entails $\cl(K) \subset \U$.
As for any $y \in K$, $K \setminus y$ is a minimal generator of $y$, we deduce from $\cl(K) \subset \U$ and Remark~\ref{rem:matroid-AP} that $K \cap A = \emptyset$.
Therefore, $A$ vacuously satisfies the implication $K \setminus x \imp x$.
We deduce that $Y = A$ is $\is_E$-closed as required.

We move to the second case $Y = B^*$.
We first show that $B^*$ is a minimal generator of some $y \notin B^*$.
Let $a \in A$.
Since $a$ is almost prime, it admits a minimal generator $B$ which is a base of $\cc{M}$ (different from $B^*$) by Remark~\ref{rem:matroid-AP}.
We have $a \in B^* \setminus B$.
Hence, applying the base exchange axiom, there exists $y \in B \setminus B^*$ such that $B' = (B^* \setminus \{a\}) \cup \{y\}$ is a base of $\cc{M}$.
In particular, $y \in B \setminus B^*$ implies $y \notin A$.
But then, $a \notin B'$ implies that $B'$ is of the form $B' = K \setminus \{a\}$ for some circuit $K$ containing $a$ and $y$ again by Remark~\ref{rem:matroid-AP}.
Therefore, $B^* = (B' \setminus \{y\}) \cup \{a\} = K \setminus \{y\}$ is a minimal generator of $y$.

We now prove that $y \notin \is_E(Y) =\is_E(B^*)$.
Recall that $\is_E \subseteq \is_{cd}$.
To prove our claim, we show that for any inclusion-wise minimal subset $\is'$ of $\is_{cd}$ that allows to reach $y$ from $B^*$ using forward chaining, $\is' \nsubseteq \is_E$.
Since $B^*$ is a minimal generator of $y$ and $A \subseteq B^*$, any such $\is'$ must contain, for each $a \in A$, an implication $A' \imp x$ where $a \in A'$ and $x \notin A$.
As $(\U, \cl)$ has the exchange property, $(A' \setminus \{a\}) \cup \{x\}$ is thus a minimal generator of $a$ since $A' \imp x \in \is_{cd}$ .
Because $x$ is not almost prime in $(\cs, \subseteq)$, we deduce $A' \imp x \notin \is_E$.
Therefore, $\is' \nsubseteq \is_E$.
This proves that $y \notin \is_E(Y)$, and concludes the proof.

\end{proof}

Combining Corollary~\ref{cor:incomparable-valid} and Lemma~\ref{lem:matroid-AP}, we obtain the desired characterization of geometric lattices with valid $E$-base.

\ifx\arxiv\undefined
\begingroup
\def\thetheorem{\ref{thm:matroid-valid}}
\begin{theorem}
The (aggregated) $E$-base of a standard closure space with modular lattice is valid if and only if for every essential set $C$ and any predecessor $C'$ of $C$, $\card{C \setminus C'} = 1$.
\end{theorem}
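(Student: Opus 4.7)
The plan is to lean on the tight structural description of essential sets in a modular lattice given by Proposition~\ref{prop:wild-modular}: for a non-ji essential $C$ with predecessors $C_1,\dots,C_m$, the interval $[C_*,C]$ is a diamond with $m\geq 3$ atoms, the differences $C_i\setminus C_*$ are non-empty and pairwise disjoint, and the pseudo-closed sets spanning $C$ are exactly the pairwise unions $C_i\cup C_j$. This scaffold will let me translate validity of the $E$-base into a counting condition on the sizes of the $C_i\setminus C_*$, and hence on $|C'\setminus C_*|$ (equivalently $|C\setminus C'|$ once the full combinatorics of the diamond is fixed).

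First I would characterize the almost-prime elements in $(\idl C,\subseteq)$: an element $x\in C$ is almost prime if and only if there is a (necessarily unique) predecessor $C_i$ with $C_i\setminus C_*=\{x\}$. The ``if'' direction is direct: by disjointness $x$ lies only in $C_i$ among the $C_j$'s, which makes $x$ prime in $(\idl C_i,\subseteq)$ because $C_i=C_*\cup\{x\}$; yet joining any two other predecessors yields $C$ while not containing $x$, so $x$ is not prime in $(\idl C,\subseteq)$. For the converse I would invoke Lemma~\ref{lem:E-QC}: the set $Q=C\setminus\{y:x\in \cl^b(y)\}$ is quasi-closed and spans $C$, hence by Proposition~\ref{prop:wild-modular} it has the form $\bigcup_{i\in I}C_i$. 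Using $C_*\subseteq Q$ together with the fact that $\cl(y)$ must be ji whenever $x\in\cl^b(y)$ (a modularity argument: $\cl(y)\cap C_*\prec\cl(y)$ forces $\cl(y)$ to have a unique predecessor), one pins down $Q=C\setminus\{x\}$ and the unique predecessor $C_i$ with $C_i\setminus C_*=\{x\}$.

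Next I would compute the $\is_E$-closure of any quasi-closed spanning set $Q=\bigcup_{i\in I}C_i$ of $C$, proving
\[
\is_E(Q)\;=\;Q\cup\bigl\{x\in C:C_j\setminus C_*=\{x\}\text{ for some }j\bigr\}.
\]
The $\supseteq$ inclusion follows because $Q$ is $\cl^b$-closed and therefore contains a $\cl^b$-minimal spanning set $A$ of $C$; by the almost-prime characterization and Corollary~\ref{cor:E-min-span-set}, $A$ is an $E$-generator of every almost prime element of $C$ outside $\cl^b(A)$. For $\subseteq$ I would argue by contradiction: if an $E$-generator $A\subseteq Q'$ fires to add some $y\in C_j\setminus C_*$ with $|C_j\setminus C_*|\geq 2$, then the minimality in Definition~\ref{def:E-generator} and the almost-prime description force $\cl(A)\subseteq C_j$; but then $A$ must hit $C_j\setminus C_*$, and since $C_j\nsubseteq Q$ implies $Q\cap(C_j\setminus C_*)=\emptyset$ (by disjointness of the pieces in the diamond), the offending element of $A$ can only lie among the singleton-pieces, contradicting $|C_j\setminus C_*|\geq 2$.

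Putting the two steps together, a pseudo-closed set $P=C_i\cup C_j$ spanning $C$ is faulty exactly when some $C_k$ with $k\neq i,j$ satisfies $|C_k\setminus C_*|>1$. Since ji essential sets are never faulty (they are pinned down by the binary part of $\is_E$), aggregating over all non-ji essential sets yields the stated characterization: the $E$-base is valid precisely when $|C'\setminus C_*|=1$ (equivalently, the complementary counting statement $|C\setminus C'|$ takes its smallest possible value) for every predecessor $C'$ of every essential set $C$. The hard part is the $\subseteq$ inclusion in the closure computation: one must rigorously exclude chains of $E$-implications that might surreptitiously add non-almost-prime elements, and this relies crucially on modularity---specifically, on the disjointness of the $C_i\setminus C_*$ and on the rigidity of the diamond $[C_*,C]$ which restricts where an $E$-generator's closure can sit.
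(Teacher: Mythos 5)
Your proposal is correct and follows essentially the same route as the paper: the same characterization of almost prime elements via the diamond $[C_*,C]$ from Proposition~\ref{prop:wild-modular} and Lemma~\ref{lem:E-QC}, the same computation $\is_E(Q)=Q\cup\{x \st C_i\setminus C_*=\{x\}\ \text{for some predecessor } C_i\}$ for quasi-closed spanning sets $Q$, and the same resulting criterion that a pseudo-closed set $C_i\cup C_j$ is faulty exactly when some other predecessor $C_k$ has $\card{C_k\setminus C_*}>1$. Note only that the condition you actually establish (and the one the paper proves) is $\card{C'\setminus C_*}=1$ for every predecessor $C'$ of every essential set $C$, which is the correct reading of the statement; the literal form $\card{C\setminus C'}=1$ cannot hold for a non-ji essential set, and your closing remark interpreting it as ``$\card{C\setminus C'}$ takes its smallest possible value'' is the right way to reconcile the two.
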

\addtocounter{theorem}{-1}
\endgroup
\else
\THMgeometric*
\fi

Theorem~\ref{thm:matroid-valid} can in particular be applied to binary matroids, i.e., matroids that can be represented over $\textsf{GF}(2)$.
These have been extensively studied~\cite{oxley2006matroid} and notably include graphic matroids.
They enjoy the following property:

\begin{theorem}\emph{\cite[Theorem 10]{wild1994theory}} \label{thm:binary-matroid}
In a simple (i.e., standard) binary matroid, a closed set is essential if and only if it is a closed circuit.
\end{theorem}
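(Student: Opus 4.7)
I would prove the two directions separately, leaning on two standard facts about circuits in matroids: no circuit properly contains another, and in a binary matroid the symmetric difference of two distinct circuits decomposes into a disjoint union of circuits.

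For the direction ($\Leftarrow$), let $K$ be a closed set that is also a circuit. I would construct an explicit quasi-closed, non-closed subset of $K$ that spans $K$; by definition this forces $K$ to contain a pseudo-closed spanning set and hence to be essential. Pick any $x \in K$ and set $Q = K \setminus \{x\}$. Since $K$ is a circuit, $Q$ is independent, and since $K$ is closed, $\cl(Q) = \cl(K) = K$, so $Q$ spans $K$ but $Q \neq K$, hence $Q$ is not closed. To see $Q$ is quasi-closed, take $X \subseteq Q$ with $\cl(X) \subsetneq K$ and suppose for contradiction $\cl(X) \nsubseteq Q$. Closedness of $K$ forces every $z \in \cl(X) \setminus X$ to lie in $K$, hence in $K \setminus Q = \{x\}$, so $x \in \cl(X)$. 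Then $X \cup \{x\}$ contains a circuit $K' \ni x$ with $K' \subseteq K$; circuit-minimality gives $K' = K$, which forces $X = K \setminus \{x\}$ and $\cl(X) = K$, a contradiction. Therefore $Q$ is quasi-closed, spans $K$, and is not closed, so some pseudo-closed $P \subseteq Q$ spans $K$ and $K$ is essential.

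For the direction ($\Rightarrow$), let $K$ be essential, so $K$ is closed and admits a pseudo-closed proper subset $P$ spanning $K$. The aim is to show $|K| = r(K)+1$, i.e.\ that $K$ is itself a circuit. My approach would be: for each $y \in K \setminus P$, the independence structure on $P$ together with $y \in \cl(P)$ yields a unique circuit $K_y$ with $y \in K_y$ and $K_y \setminus \{y\} \subseteq P$. If $K$ were not a circuit, then either there exist two distinct such circuits $K_{y_1}, K_{y_2}$ for distinct $y_1, y_2 \in K \setminus P$, or else there is a circuit $K' \subsetneq K$ entirely inside $P$. I would then use binarity: the symmetric difference $K_{y_1} \triangle K_{y_2}$ decomposes into a disjoint union of circuits lying inside $K$, and from this decomposition one extracts a strictly smaller quasi-closed subset of $P$ that still spans $K$, contradicting minimality of $P$ among quasi-closed spanning sets of $K$. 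Equivalently, one can argue via modular pairs of flats in binary matroids to directly exclude $|K| > r(K)+1$ for closed essential $K$.

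\textbf{Main obstacle.} The forward direction is the delicate one, and binarity is genuinely needed: non-binary examples (most notably $U_{2,4}$) admit closed essential sets strictly larger than any circuit they contain. The crux is controlling how the circuits $K_y$ induced by elements of $K \setminus P$ interact, and extracting from their symmetric differences a proper quasi-closed spanning subset of $K$ that undercuts $P$. I expect this case analysis, rather than the construction in the $(\Leftarrow)$ direction, to be where the substance of the proof lives.
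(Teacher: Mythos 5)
First, a point of reference: the paper does not prove this statement at all — it is imported verbatim as Theorem 10 of Wild's 1994 paper — so there is no internal proof to compare your attempt against; I can only judge the attempt on its own. Your ($\Leftarrow$) direction is correct and essentially complete: for a closed circuit $K$ and $x \in K$, the set $Q = K \setminus \{x\}$ is independent, spans $K$, is not closed, and your circuit-minimality argument does show it is quasi-closed; a minimal quasi-closed spanning set of $K$ contained in $Q$ is then proper, hence non-closed, hence pseudo-closed, so $K$ is essential. (Note binarity plays no role in this direction.)

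The ($\Rightarrow$) direction — which you yourself identify as the substantive one — has a genuine gap: the step ``from the symmetric-difference decomposition one extracts a strictly smaller quasi-closed subset of $P$ that still spans $K$'' is never carried out, and it is the entire content of the theorem. Several of the surrounding claims are also shaky. The circuit $K_y$ with $y \in K_y$ and $K_y \setminus \{y\} \subseteq P$ is not unique unless $P$ is independent, which is essentially what must be proved; and because your $K_{y_1}, K_{y_2}$ may differ inside $P$, their symmetric difference can simply be another large circuit, yielding no contradiction. The reformulation ``$\card{K} = r(K)+1$, i.e.\ $K$ is a circuit'' is not an equivalence: in the graphic (binary, simple) matroid of a triangle plus a bridge, the whole ground set is closed with $\card{K} = r(K)+1$ but is not a circuit. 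Most importantly, your sketch never uses quasi-closedness of $P$ except as the target of a contradiction, whereas it is exactly what pins the circuits down.

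Here is how your intended route can be completed, which shows where the missing idea lies. Let $P$ be any proper quasi-closed spanning subset of $K$ (minimality of $P$ is not needed), let $B \subseteq P$ be a basis of $K$, and let $y \in K \setminus P$. Quasi-closedness gives $\cl(B \setminus \{b\}) \subseteq P$ for every $b \in B$, hence $y \notin \cl(B \setminus \{b\})$, so the fundamental circuit of $y$ over $B$ must be all of $B \cup \{y\}$. If there were a second element $z \in K \setminus P$, then $B \cup \{y\}$ and $B \cup \{z\}$ are distinct circuits whose symmetric difference $\{y, z\}$ must contain a circuit by binarity, contradicting simplicity; so $K = P \cup \{y\}$. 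If $P$ were dependent, pick $z \in P \setminus B$, exchange it into $B$ along its fundamental circuit to get a basis $B' = (B \setminus \{b\}) \cup \{z\} \subseteq P$; then $B \cup \{y\}$ and $B' \cup \{y\}$ are distinct circuits with symmetric difference $\{b, z\}$, again contradicting simplicity. Hence $P = B$ and $K = B \cup \{y\}$ is a circuit, closed by hypothesis. So your two raw ingredients (circuit incomparability, binary symmetric differences) do suffice, but only after quasi-closedness is used to force every relevant circuit to have the form $B \cup \{y\}$ over a common basis $B \subseteq P$ — the step absent from your plan, and without which the descent you propose cannot get started.
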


Given that all circuits of a matroid are incomparable, it follows that all essential sets of a binary matroid are incomparable.
Therefore:

\begin{corollary} \label{cor:binary-valid}
If $(\U, \cl)$ is a closure space associated to a binary matroid, then its canonical and $E$-base are equal.
Hence, its $E$-base is valid.
\end{corollary}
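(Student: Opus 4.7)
The plan is to combine Theorem~\ref{thm:binary-matroid} with Corollary~\ref{cor:incomparable-valid}, after verifying an incomparability property of closed circuits. First, I would note that since $(\U, \cl)$ comes from a matroid, its lattice $(\cs, \subseteq)$ is geometric, hence atomistic; this puts us in the setting where the stronger conclusion of Corollary~\ref{cor:incomparable-valid} (equality of the canonical and $E$-base, not merely validity of the latter) is available, provided the non-ji essential sets are pairwise incomparable.

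The key step is then to check this incomparability. By Theorem~\ref{thm:binary-matroid}, the essential sets of $(\U, \cl)$ are exactly the \emph{closed circuits}, i.e., those circuits of the matroid $\cc{M}$ that happen to be closed sets of $(\U, \cl)$. Since circuits of any matroid are, by definition, inclusion-wise minimal dependent sets, the family $\cc{K}$ is an antichain under $\subseteq$. In particular, any two distinct closed circuits, being distinct elements of $\cc{K}$, are incomparable as subsets of $\U$, hence as closed sets in $(\cs, \subseteq)$. Consequently, the essential sets of $(\U, \cl)$ (and a fortiori the non-ji ones) form an antichain in $(\cs, \subseteq)$.

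At this point Corollary~\ref{cor:incomparable-valid} applies to the atomistic closure space $(\U, \cl)$ and yields directly that $\is_E = \is_{DG}$, which in particular means $(\U, \is_E)$ is valid. There is no real obstacle beyond this; the one point worth being explicit about is that ``closed circuit'' in Theorem~\ref{thm:binary-matroid} is read as ``a circuit which is a closed set'' (rather than ``the closure of a circuit''), so that the antichain property of $\cc{K}$ transfers verbatim to the family of essential sets.
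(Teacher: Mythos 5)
Your proof is correct and follows the same route as the paper: invoke Theorem~\ref{thm:binary-matroid} to identify essential sets with closed circuits, use the antichain property of circuits to conclude the essential sets are pairwise incomparable, and then apply Corollary~\ref{cor:incomparable-valid} in the atomistic setting to get $\is_E = \is_{DG}$ and hence validity.
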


\section{Embedding into lattices with valid \mtt{$E$}{E}-base} \label{sec:sublattice}

In this section, we prove that any lattice can be embedded as a sublattice of a lattice with valid $E$-base.
This implies that lattices with valid $E$-base cannot be characterized by forbidden sublattices or universal sentences.

Beforehand, we give an informal presentation of the construction we shall use and the intuition on which it relies.
Let $(\U, \cl)$ be a closure space and assume that its $E$-base is not valid.
Then, following Section~\ref{sec:E-base}, there are a number of faulty essential sets, with their associated faulty pseudo-closed sets, in $(\U, \cl)$.
Our strategy consists in fixing these essential sets iteratively. 
In order to avoid all ways $C$ can be faulty, a number of which have been exhibited in Examples~\ref{ex:leaf} to \ref{ex:S7-relaxed}, it is sufficient to guarantee that the $E$-base is as exhaustive as possible with respect to $C$: every $\cl^b$-minimal spanning set of $C$ is a $E$-generator of a number of almost prime elements that generate all remaining elements (of $C$) via binary implications. 

To obtain this situation, we will \emph{lift} $C$ by introducing, for each predecessor $C'$ of $C$ a new closed set $F$ in between $C$ and $C'$ so that $F$ is almost prime w.r.t.\ $C$ in the new closure system.
This operation of inserting a single closed in between a closed set and one of its predecessor corresponds to a particular case of one-point extension~\cite[Chapter IV]{gratzer2011lattice} in the lattice theoretic set-up.
Equivalently, and more visually, it consists in subdividing each edge  $(C', C)$ of the Hasse diagram of $(\cs, \subseteq)$ as shown in Figure~\ref{fig:lifting-scheme}.
It is worth mentioning that this construction is not equivalent to doubling closed sets.
Indeed, if one wants to double a closed set $C$, then the new closed set $C'$ will lie below all successors of $C$, while here each closed set $C'$ being inserted lies in between $C$ and only one of its successors.

\begin{figure}[ht!]
    \centering
    \includegraphics[scale=\FIGliftscheme]{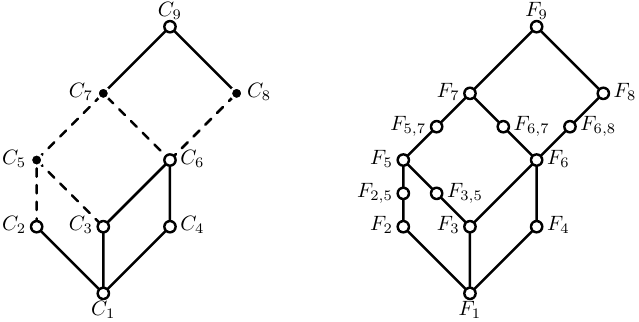}
    \caption{Scheming lifting operation. On the left, a lattice where the three closed sets $C_5$, $C_7$ and $C_8$ (in black) are to be lifted. Lifting consists in inserting a new closed set in between these closed sets and each of their predecessors, or equivalently, in subdividing the (dashed) edges $(C_2, C_5)$, $(C_3, C_5)$, $(C_5, C_7)$, $(C_6, C_7)$ and $(C_6, C_8)$ of the diagram. 
    The resulting lattice is pictured on the right. Observe that the lattice on the left is isomorphic to the sublattice consisting only of the $F_i$'s on the right.}
    \label{fig:lifting-scheme}
\end{figure}

We proceed to formally describe the lifting operation for arbitrary closed sets in our language of closure systems.
We fix a standard (non-trivial) closure space $(\U, \cl)$ with closure system $(\U, \cs)$.
We consider an indexing $\emptyset = C_1, C_2, \dots, C_m = \U$ of the closed sets of $\cs$ such that for all $1 \leq i, j \leq m$, $C_i \subseteq C_j$ entails $i \leq j$.
Let $\cc{L} \subseteq \cc{C} \setminus \{\emptyset\}$ be the set of closed sets to lift.
For each $C_j \in \cc{L}$ and each predecessor $C_i$ of $C_j$, we create a new element $y_{i,j}$.
It will be used as an almost prime element whose closure will lie in between $C_i$ and $C_j$.
Let
\[ Y = \{ y_{i,j} \st \text{$C_i$ is a predecessor of $C_j$ in $\cs$ and $C_j \in \cc{L}$}\} \] 
and let $T = \U \cup Y$ be a new ground set. 

For convenience, we introduce an intermediate mapping $f$ defined for all $Z \subseteq \U$ by $f(Z) = \{y_{i,j} \st \text{$C_j \subseteq Z$ and $C_j \in \cc{L}$} \}$.
Observe that if $Z$ does not include any closed set of $\cc{L}$, then $f(Z) = Z$. 
We also define three disjoint families of subsets of $T$:
\begin{align*}
    \cc{F}_{\cc{L}} & = \{C_i \cup f(C_i) \st C_i \in \cc{L}\} \\
    \cc{F}_{\cs \setminus \cc{L}} & = \{C_i \cup f(C_i) \st C_i \in \cs \setminus \cc{L}\} \\ 
    \cc{F}_Y   & = \{C_i \cup f(C_i) \cup \{y_{i,j}\} \st C_i \in \cs, y_{i,j} \in Y\}
\end{align*}
Let $\cc{F}_\cs = \cc{F}_{\cc{L}} \cup \cc{F}_{\cs \setminus \cc{L}}$.
We will use $F_i$ to denote the set $C_i \cup f(C_i)$ in $\cc{F}_\cs$ and $F_{i, j}$ to denote the set $C_i \cup f(C_i) \cup \{y_{i,j}\}$ in $\cc{F}_Y$.
It follows that $F_{i,j} = F_i \cup \{y_{i,j}\}$.
Finally, we put $\cc{F} = \cc{F}_\cs \cup \cc{F}_Y$.
Intuitively, $\cc{F}_Y$ models the closure of the almost prime elements we insert in between each covering pair of closed sets of $\cc{L}$, while $\cc{F}_\cs$ models the update of $\cs$ according to $Y$.
This concludes the construction of the set system $(T, \cc{F})$.

In what follows, we first need to show that $(T, \cc{F})$ is indeed a standard closure system whose closure lattice $(\cc{F}, \subseteq)$ embeds $(\cc{C}, \subseteq)$ and corresponds to the construction we hinted, pictured in Figure~\ref{fig:lifting-scheme}.
This constitutes Lemmas~\ref{lem:F-closure-system}, \ref{lem:F-subdivision}, Proposition~\ref{prop:prop-psi} and Corollary~\ref{cor:psi-standard}.
\ifx\arxiv\undefined
The proofs of these intermediate claims are routine checks.
For this reason they are omitted, but they can be find in the arXiv version of this paper.
\else
The proofs of these intermediate claims being mostly technical, they are given in Appendix~\ref{app:proofs} for readability.
\fi
This allows us to focus on our second step, namely, that the closed sets that have been lifted are now essential in $(T, \cc{F})$ but not faulty.

\ifx\arxiv\undefined
\begin{lemma} \label{lem:F-closure-system}
The pair $(T, \cc{F})$ is a closure system.
\end{lemma}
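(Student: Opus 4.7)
The plan is to verify the two defining axioms of a closure system: that $T$ itself belongs to $\cc{F}$, and that $\cc{F}$ is closed under intersection. The first axiom is immediate. Since $\U = C_m$ lies in $\cs$ and $f(\U) = Y$ (every $C_j \in \cc{L}$ is included in $\U$), we have $T = \U \cup Y = F_m \in \cc{F}_\cs \subseteq \cc{F}$.

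For closure under intersection, I would first record two auxiliary observations. First, the map $f$ commutes with intersection on $\cs$: for any $C_i, C_j \in \cs$, $f(C_i) \cap f(C_j) = f(C_i \cap C_j)$, which is immediate from the definition of $f$ together with the fact that $\cs$ is closed under intersection. Second, if $y_{i,j}$ lies in some $F_k \in \cc{F}_\cs$, then since $y_{i,j} \notin \U$ we must have $y_{i,j} \in f(C_k)$, which by definition of $f$ forces $C_j \subseteq C_k$. Combined with $C_i \prec C_j$, this yields $C_i \subsetneq C_k$, hence $F_i \subseteq F_k$.

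Equipped with these, I would split into three cases on the types of $F, F' \in \cc{F}$. When both $F = F_i, F' = F_j$ lie in $\cc{F}_\cs$, the disjointness of $\U$ and $Y$ together with the first observation yield $F_i \cap F_j = (C_i \cap C_j) \cup f(C_i \cap C_j) = F_k$ where $C_k = C_i \cap C_j$, so $F_i \cap F_j \in \cc{F}_\cs$. When $F = F_l \in \cc{F}_\cs$ and $F' = F_{i,j} \in \cc{F}_Y$, I would split on whether $y_{i,j} \in F_l$: if yes, the second observation gives $F_i \subseteq F_l$, and hence $F_l \cap F_{i,j} = F_i \cup \{y_{i,j}\} = F_{i,j} \in \cc{F}_Y$; if no, the intersection reduces to $F_l \cap F_i \in \cc{F}_\cs$ by the previous subcase. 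When $F = F_{i,j}$ and $F' = F_{k,l}$ are both in $\cc{F}_Y$, I would again branch on whether $y_{i,j} \in F_k$ and whether $y_{k,l} \in F_i$; the configuration in which both memberships hold is ruled out, since it would chain $C_i \prec C_j \subseteq C_k \prec C_l \subseteq C_i$, contradicting $C_i \subsetneq C_i$, while the remaining subcases collapse to an element of $\cc{F}_\cs$ or $\cc{F}_Y$ by the same reasoning as above.

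I expect the main difficulty to be the bookkeeping rather than any mathematical depth: the mixed cases involve several subconfigurations to dispatch carefully, and one must verify in each that the new elements $y_{i,j}, y_{k,l}$ combine with the closed-set part in exactly the way needed to land back in one of the declared families $\cc{F}_\cs$ or $\cc{F}_Y$, with no spurious output.
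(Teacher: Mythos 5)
Your proposal is correct and takes essentially the same route as the paper: verify $T = F_m \in \cc{F}_\cs$, then reduce every intersection to the computation $F_i \cap F_k = (C_i \cap C_k) \cup f(C_i \cap C_k)$ using the disjointness of $\U$ and $Y$ and the fact that $f$ commutes with intersections, so that $C_i \cap C_k \in \cs$ puts the result back in $\cc{F}_\cs$. The only (cosmetic) difference is that the paper restricts at once to incomparable closed sets, which kills all cross terms in the mixed and $\cc{F}_Y$--$\cc{F}_Y$ cases, whereas you dispatch the comparable subcases explicitly via your second observation.
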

\else
\begin{lemma}[restate=LEMclosuresystem, label=lem:F-closure-system]
The pair $(T, \cc{F})$ is a closure system.
\end{lemma}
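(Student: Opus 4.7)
The proof requires verifying the two defining axioms of a closure system: that $T$ itself belongs to $\cc{F}$, and that $\cc{F}$ is closed under binary intersection.

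For the first axiom, the plan is to observe that $\U = C_m \in \cs$, and that $f(\U) = Y$, since every $C_j \in \cc{L}$ satisfies $C_j \subseteq \U$ and hence every $y_{i,j} \in Y$ lies in $f(\U)$. Therefore $C_m \cup f(C_m) = \U \cup Y = T$, which lies in $\cc{F}_{\cc{L}}$ if $\U \in \cc{L}$ and in $\cc{F}_{\cs \setminus \cc{L}}$ otherwise; in either case $T \in \cc{F}_\cs \subseteq \cc{F}$.

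For the second axiom I would proceed by case analysis on which of $\cc{F}_\cs$ and $\cc{F}_Y$ the two intersecting sets belong to. The key preliminary is the identity $f(Z_1) \cap f(Z_2) = f(Z_1 \cap Z_2)$, valid because $y_{i,j}$ lies in the intersection iff $C_j \subseteq Z_1 \cap Z_2$ and $C_j \in \cc{L}$. Combined with $\cs$ being a closure system (so $C_i \cap C_{i'} = C_k$ for some $k$), this immediately gives $F_i \cap F_{i'} = C_k \cup f(C_k) = F_k \in \cc{F}_\cs$ for two members of $\cc{F}_\cs$.

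The mixed case $F_i \cap F_{k,l}$ and the $Y$--$Y$ case $F_{k,l} \cap F_{k',l'}$ both reduce to tracking whether each decorating element $y_{k,l}$ survives the intersection. The main observation I would invoke is that $y_{k,l}$ lies in $F_i$ (respectively in $F_{k'}$) precisely when $C_l \subseteq C_i$ (respectively $C_l \subseteq C_{k'}$); and since $C_k \prec C_l$ in $\cs$, this forces $C_k \subseteq C_i$ (resp.\ $C_k \subseteq C_{k'}$), so $F_k \cap F_i = F_k$ (resp.\ $F_k \cap F_{k'} = F_k$). The intersection then collapses to either some $F_k \in \cc{F}_\cs$, when no $Y$-element survives, or to $F_k \cup \{y_{k,l}\} = F_{k,l} \in \cc{F}_Y$, when exactly one does. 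In the $Y$--$Y$ case the subcase where both $y_{k,l}$ and $y_{k',l'}$ survive with $(k,l) \neq (k',l')$ would force the cycle $C_l \subseteq C_{k'} \prec C_{l'} \subseteq C_k \prec C_l$, a contradiction, so it is vacuous. The only real obstacle is bookkeeping: matching each of the handful of subcases with its target family $\cc{F}_\cs$ or $\cc{F}_Y$, which the identity $F_{k,l} = F_k \cup \{y_{k,l}\}$ makes essentially mechanical.
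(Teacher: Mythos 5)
Your proposal is correct and follows essentially the same route as the paper's proof: verify $T \in \cc{F}$, then closure under pairwise intersection via the identity $f(Z_1)\cap f(Z_2)=f(Z_1\cap Z_2)$ together with a case analysis on membership in $\cc{F}_\cs$ versus $\cc{F}_Y$. The only cosmetic difference is that you track directly whether a decorating element $y_{k,l}$ survives the intersection (in which case the two sets are in fact comparable and the intersection collapses to $F_{k,l}$ or $F_k$), whereas the paper restricts attention to incomparable pairs, shows the decorating elements cannot survive there, and thereby reduces everything to the $\cc{F}_\cs$--$\cc{F}_\cs$ computation.
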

\fi

Since $(T, \cc{F})$ is a closure system, it has a corresponding closure operator which we call $\psi$.
While the full description of $\psi$ will not be necessary, we still need some of its properties that we give in the next proposition.

\ifx\arxiv\undefined
\begin{proposition} \label{prop:prop-psi}
The following statements hold:
\begin{enumerate}[(1)]
    \item for any $Z \subseteq \U$, $\psi(Z) = \cl(Z) \cup f(\cl(Z))$
    \item for any $y_{i,j} \in Y$, $\psi(y_{i,j}) = F_{i,j}$
\end{enumerate}
\end{proposition}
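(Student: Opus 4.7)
The plan is to exploit the fact that $\psi(X) = \bigcap \{F \in \cc{F} \st X \subseteq F\}$ for every $X \subseteq T$ and to exhibit, in each case, an explicit candidate member of $\cc{F}$ that both contains $X$ and is included in every other element of $\cc{F}$ containing $X$. Two basic observations will drive all the case analyses: first, $\U$ and $Y$ are disjoint by construction, so any subset of $\U$ sitting inside $F_i \cup \{y_{i,j}\}$ actually sits inside $C_i$; second, the auxiliary map $f$ is monotone, meaning $C \subseteq C'$ in $\cs$ implies $f(C) \subseteq f(C')$, hence $C \cup f(C) \subseteq C' \cup f(C')$.

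For item~(1), I would take as candidate $F_k := \cl(Z) \cup f(\cl(Z))$, where $C_k := \cl(Z)$. Since $C_k \in \cs$, the set $F_k$ belongs to $\cc{F}_{\cc{L}} \cup \cc{F}_{\cs \setminus \cc{L}} = \cc{F}_\cs \subseteq \cc{F}$ and clearly contains $Z$. For minimality I would pick an arbitrary $F \in \cc{F}$ with $Z \subseteq F$ and split into two subcases. If $F = F_i \in \cc{F}_\cs$, disjointness of $\U$ and $Y$ forces $Z \subseteq F_i \cap \U = C_i$, hence $C_k \subseteq C_i$, and monotonicity of $f$ yields $F_k \subseteq F_i$. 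If $F = F_{i,j} \in \cc{F}_Y$, the same argument applied to $F_{i,j} \cap \U = C_i$ gives $C_k \subseteq C_i$, so $F_k \subseteq F_i \subseteq F_{i,j}$.

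For item~(2), the candidate is $F_{i,j}$ itself, which lies in $\cc{F}_Y$ and contains $y_{i,j}$ by definition. To establish minimality, I would consider any $F \in \cc{F}$ with $y_{i,j} \in F$ and again split cases. If $F = F_\ell \in \cc{F}_\cs$, then $y_{i,j} \in F_\ell = C_\ell \cup f(C_\ell)$ and $y_{i,j} \in Y$ force $y_{i,j} \in f(C_\ell)$, so $C_j \subseteq C_\ell$ and $C_j \in \cc{L}$; combined with $C_i \subset C_j$ and the fact that $y_{i,j} \in f(C_j)$, this gives $F_{i,j} \subseteq F_j \subseteq F_\ell$. If $F = F_{k,l} \in \cc{F}_Y$, then either $(k,l) = (i,j)$ and there is nothing to prove, or $y_{i,j} \in F_k$ and the preceding case applied to $F_k$ yields $F_{i,j} \subseteq F_k \subseteq F_{k,l}$.

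I do not anticipate any substantial obstacle: the proof is essentially a careful bookkeeping of the definitions of $\cc{F}_\cs$, $\cc{F}_Y$, and $f$. The one point that requires some attention is consistently using the disjointness of $\U$ and $Y$ to reduce containment statements in $T$ to containment statements in $\U$, after which monotonicity of $f$ takes over and delivers the desired inclusions between members of $\cc{F}_\cs$.
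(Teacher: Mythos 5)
Your proposal is correct and follows essentially the same route as the paper's own proof: in each item you exhibit the candidate member of $\cc{F}$, note it contains the given set, and verify minimality by the same two-case split over $\cc{F}_\cs$ and $\cc{F}_Y$, reducing the $\cc{F}_Y$ case to the $\cc{F}_\cs$ case exactly as the paper does. The only cosmetic difference is that you phrase the reduction via $F \cap \U$ and the explicit monotonicity of $f$, which the paper leaves implicit.
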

\else
\begin{proposition}[restate=PROPproppsi, label=prop:prop-psi]
The following statements hold:
\begin{enumerate}[(1)]
    \item for any $Z \subseteq \U$, $\psi(Z) = \cl(Z) \cup f(\cl(Z))$
    \item for any $y_{i,j} \in Y$, $\psi(y_{i,j}) = F_{i,j}$
\end{enumerate}
\end{proposition}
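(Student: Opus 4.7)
The plan is to prove both statements directly from the characterization $\psi(Z) = \bigcap \{F \in \cc{F} : Z \subseteq F\}$. Two preparatory observations will do most of the work: that $f$ is monotone (each $y_{i,j}$ flagged by a set is also flagged by any superset), and that every member of $\cc{F}$ meets $\U$ in a closed set of $\cs$, namely $F_k \cap \U = F_{k,l} \cap \U = C_k$, because $Y$ and $\U$ are disjoint.

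For statement (1), I will propose $F^{*} := \cl(Z) \cup f(\cl(Z))$ as the candidate closure. This set lies in $\cc{F}_{\cs}$, being $F_{i^{*}}$ for $C_{i^{*}} = \cl(Z)$, and it trivially contains $Z$. To see that $F^{*}$ is the smallest such set, I take any $F \in \cc{F}$ with $Z \subseteq F$ and use $F \cap \U = C_k$ to deduce $Z \subseteq C_k$, hence $\cl(Z) \subseteq C_k$ by minimality of $\cl(Z)$, and then monotonicity of $f$ to deduce $f(\cl(Z)) \subseteq f(C_k) \subseteq F$. This gives $F^{*} \subseteq F$ and hence $\psi(Z) = F^{*}$.

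For statement (2), I will argue that $F_{i,j}$ itself plays the role of $\psi(y_{i,j})$. It belongs to $\cc{F}_Y$ and contains $y_{i,j}$ by construction, so only minimality needs to be verified. I take $F \in \cc{F}$ containing $y_{i,j}$ and split into cases. If $F = F_k$, or $F = F_{k,l}$ with $y_{i,j} \neq y_{k,l}$, then since $y_{i,j} \notin \U$ I must have $y_{i,j} \in f(C_k)$, which by definition of $f$ forces $C_j \subseteq C_k$. Combining with $C_i \prec C_j$ and monotonicity of $f$ yields $F_i \subseteq F_k \subseteq F$, and adjoining $y_{i,j} \in F$ gives $F_{i,j} \subseteq F$. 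The remaining case $F = F_{k,l}$ with $y_{i,j} = y_{k,l}$ is immediate since then $F_{i,j} = F_{k,l} = F$.

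Since both parts reduce to unpacking the definitions of $\cc{F}_{\cs}$, $\cc{F}_Y$, and $f$, I do not anticipate a real obstacle; the only point requiring care is to isolate the trivial subcase $y_{i,j} = y_{k,l}$ in (2), which would otherwise slip outside the generic ``$y_{i,j} \in f(C_k)$'' argument.
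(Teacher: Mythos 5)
Your proof is correct and follows essentially the same route as the paper: exhibit $\cl(Z)\cup f(\cl(Z))$ (resp.\ $F_{i,j}$) as a member of $\cc{F}$ containing the given set and show it is contained in every $F\in\cc{F}$ containing that set, using monotonicity of $f$ and the definition of $f$ to force $C_j\subseteq C_k$. The only differences are bookkeeping choices (handling the $\cc{F}_Y$ case via $F\cap\U=C_k$ instead of discarding $y_{k,\ell}$, and passing through $F_i$ rather than $F_j$ in item (2)), which do not change the argument.
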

\fi

As a first consequence of Proposition~\ref{prop:prop-psi}, we obtain that $(T, \psi)$ is standard.

\ifx\arxiv\undefined
\begin{corollary} \label{cor:psi-standard}
The closure space $(T, \psi)$ is standard.
\end{corollary}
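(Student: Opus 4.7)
The plan is to verify the defining property of standardness directly: for each $x \in T = \U \cup Y$, I will show that $\psi(x) \setminus \{x\}$ lies in $\cc{F}$. Two cases arise according to whether $x$ belongs to $Y$ or to $\U$, and both will be handled by unpacking Proposition~\ref{prop:prop-psi} together with the standardness of the original closure space $(\U, \cl)$.

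The easy case is $x = y_{i,j} \in Y$. By part (2) of Proposition~\ref{prop:prop-psi}, $\psi(y_{i,j}) = F_{i,j} = F_i \cup \{y_{i,j}\}$, hence $\psi(y_{i,j}) \setminus \{y_{i,j}\} = F_i$, which lies in $\cc{F}_\cs \subseteq \cc{F}$ by construction.

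The slightly more delicate case is $x \in \U$. Because $(\U, \cl)$ is standard, $\cl(x)$ is a join-irreducible closed set, say $C_j$, with a unique predecessor $C_{j^*} = \cl(x) \setminus \{x\} \in \cs$. By part (1) of Proposition~\ref{prop:prop-psi}, $\psi(x) = C_j \cup f(C_j)$, so $\psi(x) \setminus \{x\} = C_{j^*} \cup f(C_j)$. The central bookkeeping step is to compute $f(C_j) \setminus f(C_{j^*})$: by the definition of $f$, this set consists of all $y_{i,\ell}$ with $C_\ell \in \cc{L}$, $C_\ell \subseteq C_j$, and $C_\ell \nsubseteq C_{j^*}$; and since $C_j$ is ji with unique predecessor $C_{j^*}$, the only candidate is $C_\ell = C_j$. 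Two subcases result. If $C_j \notin \cc{L}$, then $f(C_j) = f(C_{j^*})$, hence $\psi(x) \setminus \{x\} = C_{j^*} \cup f(C_{j^*}) = F_{j^*} \in \cc{F}_\cs$. Otherwise $C_j \in \cc{L}$, and $f(C_j) \setminus f(C_{j^*}) = \{y_{j^*, j}\}$, so $\psi(x) \setminus \{x\} = C_{j^*} \cup f(C_{j^*}) \cup \{y_{j^*, j}\} = F_{j^*, j} \in \cc{F}_Y$.

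The main (and only mild) obstacle is the identification of $f(C_j) \setminus f(C_{j^*})$ for $x \in \U$: this is where the join-irreducibility of $\cl(x)$ and the uniqueness of its predecessor are essential to rule out contributions from closed sets of $\cc{L}$ other than $C_j$ itself. Everything else reduces to direct verification using the definitions of $\cc{F}_\cs$ and $\cc{F}_Y$.
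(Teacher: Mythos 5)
Your proof is correct and follows essentially the same route as the paper's: split into $x \in Y$ (where Proposition~\ref{prop:prop-psi}(2) gives $\psi(x)\setminus\{x\} = F_i$) and $x \in \U$ (where join-irreducibility of $\cl(x)$ and the subcases $\cl(x)\in\cc{L}$ versus $\cl(x)\notin\cc{L}$ identify $\psi(x)\setminus\{x\}$ as $F_{j^*,j}$ or $F_{j^*}$). Your explicit computation of $f(C_j)\setminus f(C_{j^*})$ just spells out the step the paper writes as $f(C_j)=f(C_i)\cup\{y_{i,j}\}$, so there is nothing to add.
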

\else
\begin{corollary}[restate=CORpsistandard, label=cor:psi-standard]
The closure space $(T, \psi)$ is standard.
\end{corollary}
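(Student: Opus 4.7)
The plan is to verify the standard property directly from the definitions by a short case analysis on $t \in T$, using Proposition~\ref{prop:prop-psi} to compute $\psi(t)$ and then checking that $\psi(t) \setminus \{t\}$ lands in $\cc{F}_{\cs}$ or $\cc{F}_Y$. Recall that standardness means that $\psi(t) \setminus \{t\}$ is closed for every $t \in T$, so the goal is exactly to exhibit a member of $\cc{F}$ equal to this set.

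First, I would handle the case $t \in \U$. Since $(\U, \cl)$ is standard, $\cl(t)$ is join-irreducible in $(\cs, \subseteq)$ with unique predecessor $C_i = \cl(t) \setminus \{t\}$ (taking $C_i = \emptyset$ if $t$ is an atom). By Proposition~\ref{prop:prop-psi}(1), $\psi(t) = \cl(t) \cup f(\cl(t))$, so
\[
\psi(t) \setminus \{t\} = C_i \cup f(\cl(t)).
\]
Now I would split on whether $\cl(t) \in \cc{L}$. If $\cl(t) \notin \cc{L}$, then every $C_k \in \cc{L}$ with $C_k \subseteq \cl(t)$ satisfies $C_k \subsetneq \cl(t)$, hence $C_k \subseteq C_i$ by join-irreducibility; therefore $f(\cl(t)) = f(C_i)$ and $\psi(t) \setminus \{t\} = F_i \in \cc{F}_{\cs}$. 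If instead $\cl(t) = C_j \in \cc{L}$, then the only extra contribution to $f(\cl(t))$ over $f(C_i)$ comes from the lifted element $y_{i,j}$, so $f(\cl(t)) = f(C_i) \cup \{y_{i,j}\}$ and $\psi(t) \setminus \{t\} = F_{i,j} \in \cc{F}_Y$.

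Next, I would handle the case $t = y_{i,j} \in Y$. By Proposition~\ref{prop:prop-psi}(2), $\psi(y_{i,j}) = F_{i,j} = C_i \cup f(C_i) \cup \{y_{i,j}\}$, so $\psi(t) \setminus \{t\} = C_i \cup f(C_i) = F_i \in \cc{F}_{\cs}$, which is closed. This exhausts all elements of $T$ and establishes standardness.

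I do not expect a genuine obstacle here: the only point that requires a moment of thought is the identity $f(\cl(t)) = f(C_i)$ in the first subcase, but this is immediate from the fact that $\cl(t)$ is join-irreducible in $(\cs, \subseteq)$, which is provided by the standardness of the original closure space $(\U, \cl)$. Everything else is a direct unpacking of Proposition~\ref{prop:prop-psi} and the definitions of $F_i$, $F_{i,j}$, and $f$.
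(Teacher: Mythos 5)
Your proof is correct and follows essentially the same route as the paper's: apply Proposition~\ref{prop:prop-psi}, then do the case split $t \in Y$ versus $t \in \U$, with the latter split according to whether $\cl(t) \in \cc{L}$, identifying $\psi(t) \setminus \{t\}$ as $F_i$ or $F_{i,j}$. Your explicit justification of $f(\cl(t)) = f(C_i)$ (resp.\ $f(\cl(t)) = f(C_i) \cup \{y_{i,j}\}$) via join-irreducibility of $\cl(t)$ is exactly the point the paper uses implicitly, so there is no gap.
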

\fi

Our next step is to show that the closure system $(T, \cc{F})$ does actually correspond to the operation of inserting a closed set in between each pair of closed sets $C_i, C_j$ such that $C_i$ is a predecessor of $C_j$ and $C_j \in \cc{L}$.
To do so, we first characterize the covering relation of $(\cc{F}, \subseteq)$.

\ifx\arxiv\undefined
\begin{lemma} \label{lem:F-subdivision}
In the closure lattice $(\cc{F}, \subseteq)$, the following statements hold true:
\begin{enumerate}[(1)]
    \item for any $F_{i,j} \in \cc{F}_Y$, $F_i$ is its unique predecessor and $F_j$ its unique successor
    \item for any $F_j \in \cc{F}_\cc{L}$, the predecessors of $F_j$ are precisely the closed sets $F_{i,j}$ 
    \item for any $F_j \in \cc{F}_{\cs \setminus \cc{L}}$ different from $\emptyset$, the predecessors of $F_j$ are precisely the closed sets $F_i$ such that $C_i$ is a predecessor of $C_j$ in $\cs$
\end{enumerate}
\end{lemma}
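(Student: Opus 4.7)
The plan is to derive all three statements from the structural description of the families $\cc{F}_\cs$ and $\cc{F}_Y$, using two recurring principles: first, intersecting any member of $\cc{F}$ with $\U$ recovers the underlying $C_k$, so set inclusions in $\cc{F}$ imply the corresponding inclusions in $\cs$; second, if $y_{i,j} \in F_k$ for some $F_k \in \cc{F}_\cs$, then $y_{i,j} \in f(C_k)$ forces $C_j \subseteq C_k$, hence $F_j \subseteq F_k$.

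I would prove (1) first. Given $F_{i,j} \in \cc{F}_Y$, I examine each $F' \in \cc{F}$ with $F' \supsetneq F_{i,j}$. If $F' = F_k \in \cc{F}_\cs$, the presence of $y_{i,j}$ in $F_k$ gives $F_j \subseteq F_k$ by the second principle. If $F' = F_{k,l} \in \cc{F}_Y$, then either $(k,l) = (i,j)$ (excluded) or $y_{i,j} \in F_k$, again giving $F_j \subseteq F_{k,l}$. Since $F_{i,j} \subsetneq F_j$ is immediate from $C_i \subsetneq C_j$ and $y_{i,j} \in f(C_j)$ (using $C_j \in \cc{L}$), this shows $F_j$ is the unique successor. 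A symmetric analysis, using that any $F' \subsetneq F_{i,j}$ cannot contain $y_{i,j}$ (otherwise $F_j \subseteq F'$, absurd), confines $F'$ within $F_i$, so $F_i$ is the unique predecessor.

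For (2) and (3) I would use (1) as a lever. If $F_j \in \cc{F}_\cc{L}$ and $F' \prec F_j$: a predecessor $F'$ lying in $\cc{F}_Y$ has $F_j$ as its unique successor by (1), forcing $F' = F_{k,j}$ with $C_k \prec C_j$ in $\cs$; a predecessor of the form $F_k \in \cc{F}_\cs$ is ruled out because, picking any predecessor $C_{k'}$ of $C_j$ in $\cs$ with $C_k \subseteq C_{k'}$, the intermediate set $F_{k',j}$ would strictly separate $F_k$ from $F_j$. For (3), when $F_j \in \cc{F}_{\cs \setminus \cc{L}}$, predecessors in $\cc{F}_Y$ are ruled out because any $F_{k,l} \in \cc{F}_Y$ has its unique successor $F_l$ in $\cc{F}_\cc{L}$, which cannot coincide with $F_j$, so $F_l$ sits strictly between $F_{k,l}$ and $F_j$. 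The remaining case reduces to checking that $F_k \prec F_j$ in $\cc{F}$ iff $C_k \prec C_j$ in $\cs$, which follows from the first principle together with the monotonicity of $f$.

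The main technical burden is the interleaving of the two families $\cc{F}_\cs$ and $\cc{F}_Y$: every covering-relation claim splits into subcases depending on which family the covering element belongs to, and the distinction $C_j \in \cc{L}$ versus $C_j \notin \cc{L}$ governs which subcase actually contributes to the predecessors of $F_j$. Consistently applying the two principles above should keep this case analysis compact and routine.
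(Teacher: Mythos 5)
Your overall strategy is the same as the paper's: a case analysis on whether the neighbouring closed set lies in $\cc{F}_\cs$ or $\cc{F}_Y$, driven by the two principles you isolate (intersecting with $\U$ gives the order-embedding of $\cs$ into $\cc{F}_\cs$, and $y_{i,j}\in F_k$ forces $F_j\subseteq F_k$). Your treatments of items (1) and (2) are correct and match the paper's argument, including the device of interposing $F_{k',j}$ to rule out $\cc{F}_\cs$-predecessors of a lifted closed set.

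The one step that is not justified as stated is the final claim in (3), namely that $C_k \prec C_j$ implies $F_k \prec F_j$ in $(\cc{F},\subseteq)$ ``by the first principle together with the monotonicity of $f$.'' Those two facts only give the isomorphism between $(\cs,\subseteq)$ and $(\cc{F}_\cs,\subseteq)$; they do not exclude an element of $\cc{F}_Y$ sitting strictly between $F_k$ and $F_j$, and your preceding step only excludes $\cc{F}_Y$-elements that are \emph{predecessors} of $F_j$, not arbitrary interpolants. The gap is easily closed with material you already have: either argue as the paper does that every $F\subsetneq F_j$ is contained in some $F_i$ with $C_i\prec C_j$ (for $F=F_{k,\ell}\in\cc{F}_Y$ one gets $C_\ell\subsetneq C_j$ since $C_\ell\in\cc{L}$ and $C_j\notin\cc{L}$, hence $F_{k,\ell}\subsetneq F_\ell\subseteq F_i\subsetneq F_j$ for a suitable predecessor $C_i\supseteq C_\ell$), which together with the pairwise incomparability of the $F_i$'s identifies them as exactly the maximal proper closed subsets of $F_j$; or, alternatively, take a maximal closed set strictly between $F_k$ and $F_j$, note it is a predecessor of $F_j$, hence by your earlier step it lies in $\cc{F}_\cs$ and equals some $F_{k'}$ with $C_k\subsetneq C_{k'}\subsetneq C_j$, contradicting $C_k\prec C_j$. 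With either patch the proof is complete and equivalent to the paper's.
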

\else
\begin{lemma}[restate=LEMFsubdivision, label=lem:F-subdivision]
In the closure lattice $(\cc{F}, \subseteq)$, the following statements hold true:
\begin{enumerate}[(1)]
    \item for any $F_{i,j} \in \cc{F}_Y$, $F_i$ is its unique predecessor and $F_j$ its unique successor
    \item for any $F_j \in \cc{F}_\cc{L}$, the predecessors of $F_j$ are precisely the closed sets $F_{i,j}$ 
    \item for any $F_j \in \cc{F}_{\cs \setminus \cc{L}}$ different from $\emptyset$, the predecessors of $F_j$ are precisely the closed sets $F_i$ such that $C_i$ is a predecessor of $C_j$ in $\cs$
\end{enumerate}
\end{lemma}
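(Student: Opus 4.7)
The plan is to establish each of the three claims by a direct case analysis on the form of candidate covers, using the description of every element of $\cc{F}$ as determined by its $\U$-part (some $C_k \in \cs$) together with, in the case of $\cc{F}_Y$, a single extra generator $y_{k,l}$ with $C_k \prec C_l$ in $\cs$ and $C_l \in \cc{L}$.

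The basic strict containments $F_i \subsetneq F_{i,j} \subsetneq F_j$ are routine: the identity $F_{i,j} = F_i \cup \{y_{i,j}\}$ yields the first, and $C_i \subsetneq C_j$ together with $y_{i,j} \in f(C_j)$ (since $C_j \in \cc{L}$) yields the second. These already exhibit $F_i$ as a candidate predecessor of $F_{i,j}$, $F_{i,j}$ as a candidate successor of $F_i$ and predecessor of $F_j$ in case (2), and $F_i$ as a candidate predecessor of $F_j$ in case (3). The remaining task is to rule out intermediate elements and verify uniqueness.

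For claim (1), given $F \in \cc{F}$ with $F_i \subseteq F \subseteq F_{i,j}$, I would split on whether $F \in \cc{F}_\cs$ or $F \in \cc{F}_Y$. The main observation is that if $F = F_{k,l}$ with $y_{k,l} \neq y_{i,j}$, then $y_{k,l} \in F_{i,j}$ forces $y_{k,l} \in f(C_i)$, hence $C_l \subseteq C_i$, so $F_i$ sits strictly between $F$ and $F_{i,j}$; the case $y_{k,l} = y_{i,j}$ yields $F = F_{i,j}$ directly. The dual analysis for $F_{i,j} \subseteq F \subseteq F_j$ uses the fact that $y_{i,j} \in F$ forces either $C_j \subseteq C_k$ (where $C_k$ is the $\U$-part of $F$), in which case $F_j \subseteq F$, or $F = F_{i,j}$ itself.

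Claims (2) and (3) proceed similarly. For (2), a candidate predecessor $F = F_{k,l} \subsetneq F_j \in \cc{F}_\cc{L}$ must satisfy $C_l = C_j$ (else $F_l$ is strictly between), yielding $F = F_{k,j}$ with $C_k \prec C_j$; and a candidate $F = F_k$ is always strictly below some $F_{k',j}$, so cannot be a cover. The step I expect to require the most care is claim (3), where the hypothesis $C_j \notin \cc{L}$ is crucial: any candidate $F = F_{k,l} \subseteq F_j$ forces $C_l \subseteq C_j$, and $C_l = C_j$ would place $C_j$ into $\cc{L}$, contradicting the hypothesis. Thus $C_l \subsetneq C_j$ and $F_l$ lies strictly between $F$ and $F_j$, ruling out $\cc{F}_Y$-covers. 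The remaining analysis of $F = F_k$ then reduces to the usual covering relation in $(\cs, \subseteq)$: $F_k \prec F_j$ in $\cc{F}$ iff $C_k \prec C_j$ in $\cs$, since any strict intermediate $C_k \subsetneq C_{k'} \subsetneq C_j$ yields a strict intermediate $F_k \subsetneq F_{k'} \subsetneq F_j$.
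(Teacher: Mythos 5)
Your proposal is correct and follows essentially the same route as the paper's proof: a case split on whether a candidate cover lies in $\cc{F}_\cs$ or $\cc{F}_Y$, comparing the $\U$-parts via monotonicity of $f$, using $\psi(y_{i,j})=F_{i,j}$ for item (1), and exploiting $C_j \notin \cc{L}$ in item (3) to force $C_l \subsetneq C_j$, each time exhibiting an intermediate closed set ($F_i$, $F_l$, or $F_{k',j}$) to exclude spurious covers. When writing item (3) up in full, just make explicit that elements of $\cc{F}_Y$ must also be excluded from lying strictly \emph{between} $F_k$ and $F_j$ (not only as covers of $F_j$), which follows from the same computation: $F_k \subsetneq F_{k',l} \subsetneq F_j$ would give $C_k \subseteq C_{k'} \subsetneq C_l \subsetneq C_j$, contradicting $C_k \prec C_j$.
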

\fi

From Lemma~\ref{lem:F-subdivision}, we have that each closed set $F_{i,j} \in \cc{F}_Y$ is both join and meet-irreducible in $(\cc{F}, \subseteq)$.
As a by product, for any $F_i, F_j \in \cc{F}_\cs$, both $F_i \mt F_j$ and $F_i \jn F_j$ belongs to $\cc{F}_\cs$.
This makes $(\cc{F}_\cs, \subseteq)$ a sublattice of $(\cc{F}, \subseteq)$.
Besides, we have by definition of $\cc{F}_\cs$ that the restriction of $\psi$ to $\cs$ is an order-embedding bijection between $(\cc{C}, \subseteq)$ and $(\cc{F}_\cs, \subseteq)$.
Therefore, the two lattices $(\cc{C}, \subseteq)$ and $(\cc{F}_\cs, \subseteq)$ are isomorphic.
Along with Lemma~\ref{lem:F-subdivision}, this shows that $(\cc{F}, \subseteq)$ indeed results from inserting closed sets in between each closed set of $\cc{L}$ and its predecessors.
In particular, we get

\begin{corollary} \label{cor:C-sublattice-F}
The lattice $(\cs, \subseteq)$ is a sublattice of $(\cc{F}, \subseteq)$.
\end{corollary}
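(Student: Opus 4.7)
The plan is to realize the embedding explicitly via the restriction $\phi = \psi|_\cs$, which maps each $C_i \in \cs$ to $F_i = C_i \cup f(C_i) \in \cc{F}_\cs$. By definition of $\cc{F}_\cs$, $\phi$ is onto $\cc{F}_\cs$; injectivity and the order-embedding property both follow at once from the observation $F_i \cap \U = C_i$ (so $F_i \subseteq F_j$ iff $C_i \subseteq C_j$). Hence $\phi$ is a poset isomorphism between $(\cs, \subseteq)$ and $(\cc{F}_\cs, \subseteq)$.

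The key remaining step is to verify that $\cc{F}_\cs$ is closed under the meet and join operations inherited from $(\cc{F}, \subseteq)$. Fix $F_i, F_j \in \cc{F}_\cs$ and suppose for contradiction that $F_i \mt F_j = F_{k,\ell}$ for some $F_{k,\ell} \in \cc{F}_Y$. Since $F_{k,\ell} \subsetneq F_i$ and $F_{k,\ell} \subsetneq F_j$ (neither $F_i$ nor $F_j$ lies in $\cc{F}_Y$), Lemma~\ref{lem:F-subdivision}(1) applied to $F_{k,\ell}$ yields a unique successor $F_\ell \in \cc{F}_\cs$, and therefore $F_\ell \subseteq F_i$ and $F_\ell \subseteq F_j$. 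This gives $F_\ell \subseteq F_i \mt F_j = F_{k,\ell}$, contradicting $F_{k,\ell} \subsetneq F_\ell$. A perfectly symmetric argument, now using the uniqueness of the predecessor $F_k$ of $F_{k,\ell}$ granted by the same lemma, rules out $F_i \jn F_j \in \cc{F}_Y$. So meets and joins of elements of $\cc{F}_\cs$ stay in $\cc{F}_\cs$, making $(\cc{F}_\cs, \subseteq)$ a sublattice of $(\cc{F}, \subseteq)$.

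Combining the two steps, $\phi$ is an order-isomorphism between the finite lattices $(\cs, \subseteq)$ and $(\cc{F}_\cs, \subseteq)$; being an order-isomorphism of finite lattices it automatically preserves $\mt$ and $\jn$, so it is a lattice isomorphism onto the sublattice $(\cc{F}_\cs, \subseteq)$ of $(\cc{F}, \subseteq)$. This realizes $(\cs, \subseteq)$ as a sublattice of $(\cc{F}, \subseteq)$, as claimed. No serious obstacle arises: the content has essentially been unpacked in the paragraph preceding the corollary, with Lemma~\ref{lem:F-subdivision} doing all the structural work and only the irreducibility consequences of that lemma needing to be spelled out.
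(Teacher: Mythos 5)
Your proposal is correct and follows essentially the same route as the paper: it uses the order-embedding $C_i \mapsto F_i = C_i \cup f(C_i)$ onto $\cc{F}_\cs$ together with Lemma~\ref{lem:F-subdivision}(1) to rule out elements of $\cc{F}_Y$ as meets or joins of elements of $\cc{F}_\cs$ (your unique-successor/predecessor contradiction is just the standard unpacking of the paper's remark that each $F_{i,j}$ is doubly irreducible). Nothing further is needed.
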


Our next step is to show that the closed sets in $\cc{F}_\cc{L}$, corresponding to the closed sets of $\cc{L}$ that have been lifted, are essential but not faulty.
Let $(T, \is_E)$ be the $E$-base of $(T, \psi)$.
Recall that all valid, non-trivial, binary implications of $(T, \cc{F})$ are included in $\is_E$.
Hence, if $\psi(Z) = \psi(z)$ for some $Z \subseteq T$ and some $z \in T$, then $z \in Z$ and $\is_E^b(Z) = \psi(Z)$ will hold true.
Consequently, we need only focus on the closed sets of $\cc{F}_\cc{L}$ that are non-ji.
In the next lemma, we show that the predecessors of such closed sets are all associated with almost prime elements.

\begin{lemma} \label{lem:F-almostprime}
For any nonempty, non-ji $F_j \in \cc{F}_\cc{L}$, and any $y_{i, j}$ such that $F_{i, j}$ is a predecessor of $F_j$, $y_{i,j}$ is almost prime in $(\idl F_j, \subseteq)$.
Furthermore, $y_{i,j}$ does not belong to any $\psi^b$-minimal spanning set of $F_j$.
\end{lemma}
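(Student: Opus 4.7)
The plan is to exploit the explicit description of covers in $(\cc{F}, \subseteq)$ from Lemma~\ref{lem:F-subdivision}, together with the following preliminary observation: inside the ideal $\idl F_j$, the only closed sets that contain $y_{i,j}$ are $F_{i,j}$ and $F_j$. I will verify this by inspecting the three flavours of closed sets in $\cc{F}$: for a predecessor $F_{i',j}$ of $F_j$ with $i' \neq i$ we have $y_{i,j} \notin F_{i'}$ because $C_j \nsubseteq C_{i'}$; for an $F_{i'',j''}$ strictly below $F_j$ with $(i'',j'') \neq (i,j)$, the required inclusion $C_j \subseteq C_{i''}$ would contradict $C_{i''} \subsetneq C_{j''} \subseteq C_j$; and for an $F_k \in \cc{F}_\cs$ strictly below $F_j$, we have $C_j \nsubseteq C_k$.

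With this observation, almost-primeness of $y_{i,j}$ in $(\idl F_j, \subseteq)$ splits into two checks. First, the unique predecessor of $F_j$ containing $y_{i,j}$ is $F_{i,j}$, and within $\idl F_{i,j}$ the element $y_{i,j}$ lies in $F_{i,j}$ alone, so its family of generators there is empty and primeness holds vacuously. Second, to show that $y_{i,j}$ is not prime in $(\idl F_j, \subseteq)$, I will exhibit a non-singleton minimal generator: since $F_j$ is non-ji, $C_j$ has at least two predecessors in $\cs$, so there exists a predecessor $F_{i_2,j}$ of $F_j$ with $i_2 \neq i$; the set $F_i \cup F_{i_2,j}$ has $\psi$-closure $F_j$ (because $F_i \jn F_{i_2} = F_j$ lifts $C_i \jn C_{i_2} = C_j$) and avoids $y_{i,j}$. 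The preliminary observation also rules out singleton generators of $y_{i,j}$ inside $\idl F_j$, hence every minimal generator is non-singleton.

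For the second assertion I will argue by contradiction. Suppose some $\psi^b$-minimal spanning set $A$ of $F_j$ contains $y_{i,j}$; fix any spanning set $A_i \subseteq F_i$ of $F_i$ and set $B = (A \setminus \{y_{i,j}\}) \cup A_i$. The target is to show that $B$ is a proper $\psi^b$-refinement of $A$ that still spans $F_j$, contradicting the $\psi^b$-minimality of $A$. The refinement part is clean: $y_{i,j} \in \psi^b(A)$ via $\psi(y_{i,j}) = F_{i,j}$, while $y_{i,j} \notin \psi^b(B)$ follows from the preliminary observation once I confirm that no element of $A \setminus \{y_{i,j}\}$ nor of $A_i \subseteq F_i$ carries $y_{i,j}$ in its $\psi$-closure. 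For the spanning property, $\psi(B)$ lies in $[F_i, F_j] \cap \cc{F}$, which from Lemma~\ref{lem:F-subdivision} reduces to $\{F_i, F_{i,j}, F_j\}$: the case $\psi(B) = F_i$ is eliminated because $\psi(A \setminus \{y_{i,j}\}) \subseteq \psi(B) = F_i$ would force $\psi(A) \subseteq F_{i,j}$, contradicting $\psi(A) = F_j$; the case $\psi(B) = F_{i,j}$ is excluded because $B \subseteq \psi(B) = F_{i,j}$ combined with $y_{i,j} \notin B$ gives $B \subseteq F_i$, whence $\psi(B) \subseteq F_i$, a contradiction.

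The main obstacle is the spanning check $\psi(B) = F_j$, which is where the structure of the lifted lattice genuinely enters. The two supporting ingredients---the ``triangular'' shape of the interval $[F_i, F_j] \cap \cc{F} = \{F_i, F_{i,j}, F_j\}$ and the rigidity principle that $y_{i,j}$ cannot be summoned by closing a set avoiding it---both follow directly from Lemma~\ref{lem:F-subdivision} and the construction of $\cc{F}$, so once they are in place the remaining bookkeeping is routine.
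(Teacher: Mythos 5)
Your proposal is correct and follows essentially the same route as the paper: almost-primeness via the facts that $F_{i,j}$ is the unique predecessor of $F_j$ containing $y_{i,j}$, that $F_{i,j}\setminus\{y_{i,j}\}=F_i$ is closed, and that two distinct predecessors of $F_j$ join to $F_j$; and the second claim by swapping $y_{i,j}$ for elements of $F_i$ to produce a proper $\psi^b$-refinement that still spans $F_j$. The only cosmetic difference is that you settle the spanning check by computing the interval $[F_i,F_j]\cap\cc{F}=\{F_i,F_{i,j},F_j\}$, where the paper instead invokes join-irreducibility of $F_{i,j}$ together with Lemma~\ref{lem:F-subdivision}; both rest on the same structural facts.
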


\begin{proof}
We first show that $y_{i, j}$ is almost prime in $(\idl F_j, \subseteq)$.
First, we show that it is prime in $(\idl F, \subseteq)$ for every predecessor $F$ of $F_j$ in which $y_{i,j}$ appears.
However, by Lemma~\ref{lem:F-subdivision} and by construction of $\cc{F}_Y$, the unique such predecessor of $F$ is $F_{i, j}$.
Given that $\psi(y_{i,j}) = F_{i,j}$ and that $F_{i,j} \setminus \{y_{i,j}\}$ is closed, we readily obtain that $y_{i,j}$ is prime in $(\idl F_{i,j}, \subseteq)$.
Now, $y_{i,j}$ is not prime in $(\idl F_j, \subseteq)$ as there exists by assumption a predecessor $F_{k,j}$ of $F_j$ distinct from $F_{i,j}$ which satisfies $F_{i,j} \subseteq F_i \jn F_{k, j} = F_j$.
We deduce that $y_{i,j}$ is indeed almost prime in $(\idl F_j, \subseteq)$.

We now argue that no $\psi^b$-minimal spanning set of $F_j$ contains $y_{i,j}$.
Let $Z$ be a spanning set of $F_j$ containing $y_{i,j}$.
We have $F_i \subseteq \psi^b(Z)$ since $\psi^b(y_{i,j}) = \psi(y_{i,j}) = F_{i,j}$ by Proposition~\ref{prop:prop-psi}.
Let $Z' = \psi^b(Z) \setminus \{y_{i,j}\}$.
We have $\psi(Z') \subseteq F_j$.
Since $\psi(Z) = F_j$ and $F_i \subseteq \psi^b(Z)$, there exists $z \in Z$ such that $z \nsubseteq F_i$.
Therefore, $F_i \subset F_i \jn \psi(z) \subseteq F_j$.
Since $\psi(y_{i,j})$ is and $F_i, \psi(z) \neq \psi(y_{i,j})$, we have $F_i \jn \psi(z) \neq \psi(y_{i,j}) = F_{i, j}$.
Given that $F_i \cup \jn \psi(z) \subseteq Z'$, we deduce $\psi(Z') = F_i \jn \psi(z) = F_j$ By Lemma~\ref{lem:F-subdivision}.
\end{proof}

Lemmas~\ref{lem:E-generator} and \ref{lem:F-almostprime} yields the subsequent corollary:

\begin{corollary} \label{cor:F-Egen}
Let $F_j \in \cc{F}_\cc{L}$ and let $A$ be a $\psi^b$-minimal spanning set of $F_j$ with at least $2$ elements, if any.
Then, for any $y_{i,j}$ such that $F_{i,j}$ is a predecessor of $F_j$, $A$ is a $E$-generator of $y_{i,j}$.
\end{corollary}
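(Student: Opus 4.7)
The plan is to verify the three conditions of Lemma~\ref{lem:E-generator} for $A$ and $y_{i,j}$ in the closure space $(T, \psi)$, which will immediately yield $A \in \gen_E(y_{i,j})$. Throughout, I work in the setting of Lemma~\ref{lem:F-almostprime}, where $F_j$ is assumed non-join-irreducible---the implicit context from the preceding discussion. Condition (1), asking that $A$ be a $\psi^b$-minimal spanning set of $\psi(A) = F_j$, is exactly the hypothesis. For condition (3), I would use the equivalent form (3'): it suffices to check that $y_{i,j}$ is prime in $(\idl F, \subseteq)$ for every predecessor $F$ of $F_j$ containing $y_{i,j}$. By Lemma~\ref{lem:F-subdivision}(2), the predecessors of $F_j$ are exactly the sets $F_{k,j}$, and a quick inspection of their contents (using $C_k \prec C_j$ in $\cs$, so $C_j \not\subseteq C_k$) shows that $y_{i,j} \in F_{k,j}$ only when $k = i$. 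Primality of $y_{i,j}$ in $(\idl F_{i,j}, \subseteq)$ then follows since $F_{i,j}$ is join-irreducible with unique predecessor $F_i$ not containing $y_{i,j}$---equivalently, this is already packaged in the first assertion of Lemma~\ref{lem:F-almostprime}.

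Condition (2), asserting that $A$ is a non-trivial $\psi^b$-generator of $y_{i,j}$, is where the substantive work lies. The inclusion $y_{i,j} \in \psi(A) = F_j$ is clear from $y_{i,j} \in F_{i,j} \subseteq F_j$. To establish $y_{i,j} \notin \psi^b(A)$, I plan to argue by contradiction: if $y_{i,j} \in \psi^b(A)$, then some $a \in A$ satisfies $F_{i,j} = \psi(y_{i,j}) \subseteq \psi(a) \subseteq F_j$. Since $F_j$ covers $F_{i,j}$ by Lemma~\ref{lem:F-subdivision}(1), $\psi(a)$ must equal either $F_{i,j}$ or $F_j$. The case $\psi(a) = F_j$ is ruled out by the non-join-irreducibility of $F_j$. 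The case $\psi(a) = F_{i,j}$, combined with the join-irreducibility of $F_{i,j}$ and the standardness of $(T, \psi)$ from Corollary~\ref{cor:psi-standard}, forces $a = y_{i,j}$, in direct contradiction with the second assertion of Lemma~\ref{lem:F-almostprime} that no $\psi^b$-minimal spanning set of $F_j$ contains $y_{i,j}$.

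With all three conditions of Lemma~\ref{lem:E-generator} secured, the corollary follows at once. The only genuine obstacle is condition (2), specifically the exclusion of $\psi(a) = F_j$: this is where the non-join-irreducibility of $F_j$---inherited from the ambient setting of Lemma~\ref{lem:F-almostprime}---is truly needed, the rest being a matter of unpacking the covering structure of Lemma~\ref{lem:F-subdivision} and the primality inherent to the new closed sets $F_{i,j}$.
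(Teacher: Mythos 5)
Your proof is correct and takes essentially the same route as the paper, which obtains the corollary by combining Lemma~\ref{lem:E-generator} (via condition (3')) with Lemma~\ref{lem:F-almostprime}, exactly the combination you verify. Your explicit check of condition (2) --- using the covering $F_{i,j} \prec F_j$, standardness of $(T,\psi)$, and the second assertion of Lemma~\ref{lem:F-almostprime} --- merely fills in details the paper leaves implicit.
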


Now let $F_j \in \cc{F}_\cc{L}$ be non-ji.
By Lemmas~\ref{lem:E-QC} and \ref{lem:F-almostprime}, $F_j$ is essential.
Hence the canonical base of $(T, \psi)$ contains some implicationsof the form $P \imp F_j \setminus P$ where $P$ is a pseudo-closed set spanning $F_j$.
Since all binary implications of $(T, \psi)$ are in $(T, \is_E)$, $\is_E(P)$ contains a $\psi^b$-minimal spanning set $A$ of $F_j$.
By Corollary~\ref{cor:F-Egen}, $A \imp y_{i, j}$ belongs to $\is_E$ for every $y_{i,j}$ such that $F_{i, j}$ is a predecessor of $F_j$.
Given that $\psi(y_{i,j}) = F_{i,j}$, $F_{i,j} \subseteq \is_E(P)$ thus holds.
As the predecessors of $F_j$ are precisely these $F_{i,j}$'s by Lemma~\ref{lem:F-subdivision} and as moreover $F_j = \bigcup F_{i, j}$ since $F_j$ is neither empty nor join-irreducible, we obtain that $\is_E(P) = \psi(P) = F_j$, that is, that $P \imp F_j \setminus F_j$ does follow from $(T, \is_E)$.
This leads to the following lemma:

\begin{lemma} \label{lem:F-essential-notfaulty}
In $(T, \psi)$ all the closed sets in $\cc{F}_\cc{L}$ are essential but not faulty.
\end{lemma}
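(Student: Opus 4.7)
The plan is to treat two cases for $F_j \in \cc{F}_\cc{L}$ according to whether $F_j$ is join-irreducible in $(\cc{F}, \subseteq)$, using Lemma~\ref{lem:F-subdivision} to keep track of what the predecessors of $F_j$ look like in each case.

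In the easy case where $F_j$ is ji, Lemma~\ref{lem:F-subdivision} forces $C_j$ to be ji in $\cs$, so there is a unique $x \in \U$ with $\cl(x) = C_j$ and hence $\psi(x) = F_j$ by Proposition~\ref{prop:prop-psi}. I would take $\{x\}$ itself as the witnessing pseudo-closed set: it is a non-closed spanning set of $F_j$ and is trivially quasi-closed, since its only proper subset is $\emptyset$. Essentiality follows, and the binary implications $x \imp z$ for every $z \in F_j$, which all belong to $\is_E$, close $\{x\}$ up to $F_j$, so $F_j$ is not faulty.

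The substantial case is when $F_j$ is non-ji. For essentiality, I would invoke Lemma~\ref{lem:F-almostprime} to produce some $y_{i,j}$ that is almost prime in $(\idl F_j, \subseteq)$, then use Corollary~\ref{cor:E-spanned} to conclude that $F_j$ lies in $\{\psi(A) : A \in \gen_E(y_{i,j})\}$, and finally apply Lemma~\ref{lem:E-QC} to obtain a quasi-closed spanning set of $F_j$ that harbors a pseudo-closed one. For non-faultiness, fix any pseudo-closed $P$ spanning $F_j$; since $F_j$ is non-ji, the remark on quasi-closed sets whose closure is not ji gives that $P$ is $\psi^b$-closed, so $P$ contains a $\psi^b$-minimal spanning set $A$ of $F_j$, and $\card{A} \geq 2$ because $F_j$ is non-ji. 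Corollary~\ref{cor:F-Egen} then yields $A \imp y_{i,j} \in \is_E$ for every predecessor $F_{i,j}$ of $F_j$, so each such $y_{i,j}$ lies in $\is_E(P)$; one further application of binary implications places all of $F_{i,j} = \psi(y_{i,j})$ inside $\is_E(P)$. The conclusion $\is_E(P) = F_j$ then follows from the fact that a non-ji closed set of a standard space is the union of its predecessors (Corollary~\ref{cor:psi-standard} combined with Lemma~\ref{lem:F-subdivision}(2)).

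The main obstacle is the interplay in the non-ji case: one must simultaneously ensure that $A$ sits inside $\is_E(P)$, that $\card{A} \geq 2$ (so that Corollary~\ref{cor:F-Egen} is applicable), and that reassembling the $F_{i,j}$'s via two rounds of chaining (one non-binary step $A \imp y_{i,j}$, then binary closure from $y_{i,j}$) exhausts $F_j$. Once the $\psi^b$-closedness of $P$ and the non-ji structure of $F_j$ are both exploited, the rest is a forward-chaining argument, but lining up these two structural facts is what makes the lemma work.
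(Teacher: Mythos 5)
Your proof is correct and follows essentially the same route as the paper: essentiality of a non-ji $F_j$ via the almost prime $y_{i,j}$'s (Lemma~\ref{lem:F-almostprime} with Lemma~\ref{lem:E-QC}), and non-faultiness by extracting a $\psi^b$-minimal spanning set $A$ with $\card{A}\geq 2$, applying Corollary~\ref{cor:F-Egen} to get $A \imp y_{i,j}$, and finishing with binary implications and the fact that $F_j$ is the union of its predecessors $F_{i,j}$. The only differences are cosmetic: you handle the ji case by exhibiting $\{x\}$ as the pseudo-closed set where the paper disposes of it via its preliminary remark on binary implications, and you locate $A$ inside $P$ (using that $P$ is $\psi^b$-closed) rather than inside $\is_E(P)$ as the paper does, which changes nothing in the argument.
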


Henceforth, if $\cc{L}$ comprises the essential sets of $(\U, \cl)$ that are faulty, these essential sets are fixed in $(T, \psi)$.
Moreover, $\cc{F}_Y$ only contains join-irreducible closed sets of $(\cc{F}, \subseteq)$, so that the closed sets that are introduced cannot be faulty.
Hence, one could expect that in $(T, \psi)$, the $E$-base is valid, as in the result of lifting the faulty essential set of the closure space of Example~\ref{ex:leaf}, given in Figure~\ref{fig:leaf-lift}.
\begin{figure}[ht!]
    \centering
    \includegraphics[scale=\FIGleaflift]{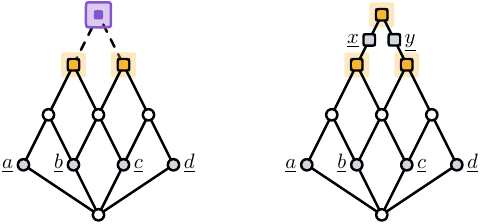}
    \includegraphics[width=\textwidth, page=3]{figures/captions.pdf}
    \caption{Lifting the faulty essential set of the closure space of Example~\ref{ex:leaf}.
    For readability, we only label ji closed sets with the element they are the closure of.
    The result of lifting the faulty essential set $abcd$ is pictured on the right, where we call $x, y$ the two new elements of the ground set.
    The (aggregated) $E$-base of the resulting closure space $(\{a, b, c, d, x, y\}, \psi)$ equals $\{x \imp abc, y \imp bcd, ac \imp b, bd \imp c, ad \imp xy\}$.
    The only implication of the canonical base of $(\{a, b, c, d, x, y\}, \psi)$ not in the $E$-base is $ad \imp xybc$ which follows from $ad \imp xy$, $x \imp abc$, $y \imp bcd$.
    The $E$-base is thus valid.}
    \label{fig:leaf-lift}
\end{figure}
This is however not true, because lifting some closed sets might turn a closed set of $\cs$ which is not lifted into a faulty essential set, even if it was not faulty, let alone essential, before.
This is illustrated in Figure~\ref{fig:faulty-lifting}.
\begin{figure}[ht!]
\centering 
\includegraphics[page=1, width=\textwidth]{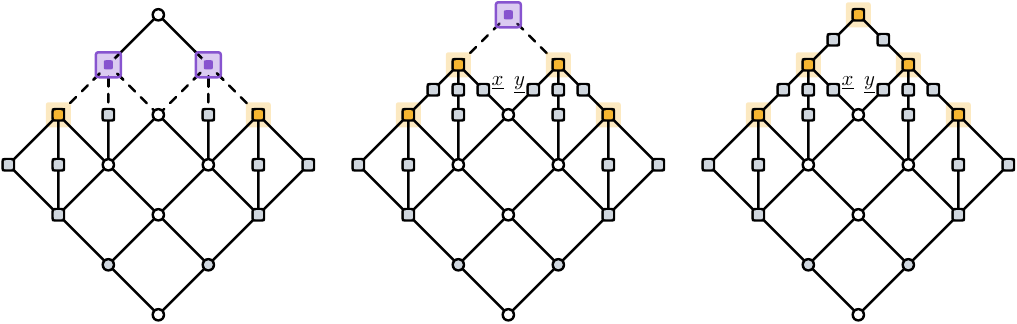}
\includegraphics[width=\textwidth, page=3]{figures/captions.pdf}
\caption{The closure lattice on the left does not have valid $E$-base (see Theorem~\ref{thm:modular-Ebase}) and has two faulty essential sets. However, lifting the faulty essential sets makes the top closed set of the new closure lattice faulty: $xy$ induces a pseudo-closed set that is not subsumed by any $E$-generator, since no other element is almost prime.
Thus, we need to reapply lifting.
Observe that the second lifting operation involves only a closed set strictly above those that have been previously lifted.}
\label{fig:faulty-lifting}
\end{figure}
Still, we show in the next statement that a new faulty essential closed set must properly contain a closed set that has been lifted.

\begin{lemma} \label{lem:faulty-increasing-lift}
An essential closed set $F_j \in \cc{F}$ is faulty in $(T, \psi)$ only if $F_j \in \cc{F}_{\cs \setminus \cc{L}}$ and there exists $F_i \in \cc{F}_\cc{L}$ such that $F_i \subset F_j$.
\end{lemma}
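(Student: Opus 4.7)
The plan is to proceed by exclusion for the first conclusion and by contrapositive for the second. For the first part, observing that any faulty essential set is necessarily non-join-irreducible, I would rule out $F_j \in \cc{F}_Y$ by Lemma~\ref{lem:F-subdivision}(1) (whose closed sets have a unique predecessor and are therefore ji), and rule out $F_j \in \cc{F}_\cc{L}$ directly by Lemma~\ref{lem:F-essential-notfaulty}. This leaves $F_j \in \cc{F}_{\cs \setminus \cc{L}}$.

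For the second part, I would assume towards contradiction that $F_j$ is faulty while no $F_i \in \cc{F}_\cc{L}$ satisfies $F_i \subset F_j$. Since $F_k \subseteq F_j$ for $C_k \in \cc{L}$ is equivalent to $C_k \subseteq C_j$, this forces that no $C_k \in \cc{L}$ is strictly below $C_j$; together with $C_j \notin \cc{L}$ (from the first part), we obtain $f(C) = \emptyset$ for every $C \in \cs$ with $C \subseteq C_j$. Consequently $F_j = C_j$, the interval $\idl F_j$ in $(\cc{F}, \subseteq)$ coincides set-wise with $\idl C_j$ in $(\cs, \subseteq)$, and Proposition~\ref{prop:prop-psi}(1) gives $\psi(Z) = \cl(Z)$ for all $Z \subseteq C_j$; in particular $\psi^b$ and $\cl^b$ agree on subsets of $C_j$.

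The key technical step is then to establish that the restriction of the $E$-base $(T, \is_E)$ to implications with both premise and conclusion in $C_j$ coincides with the analogous restriction of the $E$-base of $(\U, \cl)$. The nontrivial direction uses that any $D$-generator $B$ of some $x \in C_j$ with $\psi(B) \subseteq C_j$ forces $B \subseteq C_j$ (since $B \subseteq \psi(B)$), and that the $E$-minimality condition from Corollary~\ref{cor:E-spanned} is stable under localisation: if $B \in \gen_D^{(T,\psi)}(x)$ satisfies $\psi(B) \subsetneq \psi(A) \subseteq C_j$, then $\psi(B) \subseteq C_j$, so $B \subseteq C_j$ and $B$ is a competing $D$-generator already in $(\U, \cl)$. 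Thus $\gen_E^{(T,\psi)}(x)$ and $\gen_E^{(\U,\cl)}(x)$ agree on those generators whose closure lies in $C_j$.

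Finally, for any pseudo-closed set $P$ spanning $F_j$ in $(T, \psi)$, one has $P \subseteq F_j = C_j$, so $\is_E^{(T,\psi)}(P) \subseteq \psi(P) = C_j$ and the implications of $\is_E$ relevant to computing $\is_E^{(T,\psi)}(P)$ are exactly those localised to $C_j$; hence $\is_E^{(T,\psi)}(P) = \is_E^{(\U,\cl)}(P)$. Moreover $P$ is pseudo-closed in $(\U,\cl)$ spanning $C_j$. Since $C_j \notin \cc{L}$ means $C_j$ is not faulty in $(\U,\cl)$, we get $\is_E^{(\U,\cl)}(P) = C_j$, and so $\is_E^{(T,\psi)}(P) = F_j$, contradicting the assumed faultiness of $F_j$. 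The main obstacle is the localisation argument for $E$-generators, which must carefully track that the global minimality criterion behind $\gen_E$ reduces to a local one once we restrict to closures inside $C_j$; the rest is bookkeeping with Proposition~\ref{prop:prop-psi} and the structure of $\cc{F}_\cs$ versus $\cc{F}_Y$.
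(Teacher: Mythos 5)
Your proposal is correct and takes essentially the same route as the paper: the first part is ruled out exactly as in the discussion preceding the lemma (the sets in $\cc{F}_Y$ are ji, and $\cc{F}_\cc{L}$ is excluded by Lemma~\ref{lem:F-essential-notfaulty}), and the second part is just the contrapositive of the paper's short contradiction argument, in which the absence of a lifted set below $F_j$ forces $F_j = C_j$ and $\idl_{\cs} C_j = \idl_{\cc{F}} F_j$, so that faultiness is unchanged and clashes with $C_j \notin \cc{L}$. Your careful localisation of $\psi$, $\psi^b$, $D$-generators and $E$-generators to subsets of $C_j$ merely spells out the step the paper compresses into ``Consequently, $C_j$ is already faulty in $(\U, \cl)$'', and it is sound.
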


\begin{proof}
The fact that $F_j$ belongs to $\cc{F}_{\cs \setminus \cc{L}}$ follows from previous discussion.
Now assume for contradiction that $F_j$ does not subsume any $F_i \in \cc{F}_\cc{L}$.
Then, by definition of $f$, $F_j = C_j$ and hence, $\idl_{\cs} C_j = \idl_{\cc{F}} F_j$.
Consequently, $C_j$ is already faulty in $(\U, \cl)$.
Therefore, $C_j \in \cc{L}$ and $F_j \in \cc{F}_\cc{L}$, a contradiction.
\end{proof}

We are ready to conclude.
Let $T_0 = \U$ and $\psi_0 = \cl$ and consider the algorithm that, at step $i$, produces the closure space $(T_i, \psi_i)$ by lifting all the inclusion-wise minimal faulty essential sets of $(T_{i-1}, \psi_{i-1})$, while there are some.
Since lifting a closed set $C_j$ introduces elements below $C_j$ only, and as $\cc{L}$ comprises incomparable closed sets, it follows from Lemma~\ref{lem:faulty-increasing-lift} that the maximal size of a maximal chain from $T_{i}$ to a faulty essential closed set of $(T_i, \psi_i)$ strictly decreases as compared to $(T_{i-1}, \psi_{i-1})$.
As we are considering finite closure spaces the procedure will stop within at most $\card{\U}$ steps. 
The resulting closure space $(T, \psi)$ does not have faulty essential sets anymore, and hence has valid $E$-base. 
Moreover, $(\cs, \subseteq)$ is a sublattice of $(\cc{F}, \subseteq)$ since we can apply Corollary~\ref{cor:C-sublattice-F} transitively.
We finally obtain Theorem~\ref{thm:sublattice}:

\ifx\arxiv\undefined
\begingroup
\def\thetheorem{\ref{thm:sublattice}}
\begin{theorem}
For any standard closure space $(\U, \cl)$ with lattice $(\cs, \subseteq)$, there exists a standard closure space $(T, \psi)$ with lattice $(\cc{F}, \subseteq)$ such that $(\cs, \subseteq)$ is a sublattice of $(\cc{F}, \subseteq)$ and $(T, \psi)$ has valid $E$-base.
\end{theorem}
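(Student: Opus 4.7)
The plan is to iterate the lifting construction developed above. I would set $(T_0, \psi_0) = (\U, \cl)$ and inductively define $(T_i, \psi_i)$ as the closure space obtained from $(T_{i-1}, \psi_{i-1})$ by lifting the family $\cc{L}_i$ consisting of all inclusion-wise minimal faulty essential sets of $(T_{i-1}, \psi_{i-1})$, stopping as soon as $\cc{L}_i = \emptyset$. Each step produces a standard closure space by Corollary~\ref{cor:psi-standard}, and by Corollary~\ref{cor:C-sublattice-F} the previous closure lattice embeds as a sublattice of the new one, so composing these embeddings transitively eventually realizes $(\cs, \subseteq)$ as a sublattice of the lattice of the terminal $(T, \psi)$.

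The crux is to argue that this iteration halts. At step $i$, Lemma~\ref{lem:F-essential-notfaulty} guarantees that every closed set lifted from $\cc{L}_i$ is essential but not faulty in $(T_i, \psi_i)$; in particular, none of the sets just lifted reappears as a faulty essential set. Simultaneously, Lemma~\ref{lem:faulty-increasing-lift} asserts that any faulty essential set of $(T_i, \psi_i)$ must properly contain some element of $\cc{L}_i$. Because the lifting operation only inserts new closed sets in between covering pairs above a member of $\cc{L}_i$, and because $\cc{L}_i$ is an antichain by minimality, one can track the maximal length of an ascending chain from the bottom of the lattice up to a minimal faulty essential set: each iteration strictly shortens this length, since faulty essential sets are pushed strictly above the preceding $\cc{L}_i$. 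Finiteness of $(\U, \cl)$ then forces termination after at most $\card{\U}$ steps.

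The resulting closure space $(T, \psi)$ has no faulty essential sets, which, as discussed in Section~\ref{sec:E-base}, is precisely the condition for its $E$-base to be valid. Combined with the sublattice embedding accumulated through the iteration, this proves the theorem. The main obstacle is precisely the termination step: one must rule out that lifting spawns new faulty essential sets ad infinitum. This is handled entirely by Lemma~\ref{lem:faulty-increasing-lift}, which ensures that the \emph{location} of faulty essential sets strictly rises with each iteration, so the argument reduces to a clean induction on chain length above faulty sets in a finite lattice.
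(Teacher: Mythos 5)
Your proposal is essentially the paper's own proof: iterate lifting on the inclusion-wise minimal faulty essential sets, invoke Corollary~\ref{cor:psi-standard} and Corollary~\ref{cor:C-sublattice-F} (transitively) for standardness and the sublattice embedding, and use Lemmas~\ref{lem:F-essential-notfaulty} and \ref{lem:faulty-increasing-lift} to see that faulty essential sets can only move strictly upward, so the process terminates in at most $\card{\U}$ steps and the final space has valid $E$-base. One small correction to your termination measure: since lifting inserts new closed sets \emph{below} the lifted ones, the length of chains from the bottom up to faulty essential sets can grow; the quantity that strictly decreases (and is what the paper tracks) is the maximal length of a chain from the top element down to a faulty essential set, which is what bounds the number of iterations.
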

\addtocounter{theorem}{-1}
\endgroup
\else
\THMsub*
\fi

\begin{corollary} \label{cor:Ebase-universal}
Lattices with valid $E$-base cannot be characterized by universal sentences or by forbidden sublattices.
\end{corollary}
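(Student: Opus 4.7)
The plan is to use the standard closure properties of classes defined by forbidden sublattices and by universal first-order sentences, combined with Theorem~\ref{thm:sublattice}.

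First, I would recall that a class of lattices defined by avoiding a family of forbidden sublattices is, by definition, closed under taking sublattices: if $L$ belongs to the class and $L'$ is a sublattice of $L$, then $L'$ also avoids each forbidden pattern and hence lies in the class. A completely analogous point from model theory, immediate by inspection of the semantics of $\forall$, says that any class of structures axiomatised by universal sentences in the signature of the structure is closed under substructures; in the setting of lattices in the pure signature with operations $\mt$ and $\jn$, substructures are precisely sublattices. Consequently, any class characterised either by forbidden sublattices or by universal sentences is automatically closed under sublattices.

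Next, I would argue by contradiction. Suppose the class $\mathcal{V}$ of lattices with valid $E$-base were closed under sublattices. By Theorem~\ref{thm:sublattice}, every finite lattice $(\cs, \subseteq)$ embeds as a sublattice into some lattice $(\cc{F}, \subseteq) \in \mathcal{V}$, and closure of $\mathcal{V}$ under sublattices would then force $(\cs, \subseteq) \in \mathcal{V}$. In other words, every finite lattice would have valid $E$-base, contradicting the explicit counterexamples exhibited in Section~\ref{sec:E-base} such as Example~\ref{ex:leaf}. Hence $\mathcal{V}$ is not closed under sublattices, so it cannot be described by forbidden sublattices or by universal sentences.

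The sole nontrivial ingredient is Theorem~\ref{thm:sublattice}; the remainder is a routine application of the closure properties of sublattice-defined and universal classes. I do not anticipate any genuine obstacle beyond confirming that the notion of sublattice used in Theorem~\ref{thm:sublattice} coincides with the model-theoretic notion of substructure of a lattice, which it does since both refer to subsets closed under the two lattice operations.
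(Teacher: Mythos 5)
Your argument is correct and is exactly the reasoning the paper intends: Theorem~\ref{thm:sublattice} together with the closure of universally axiomatised (or forbidden-sublattice) classes under sublattices, plus the existence of lattices without valid $E$-base (e.g.\ Example~\ref{ex:leaf}), yields the corollary. No gaps; this matches the paper's approach.
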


To conclude the section, observe that lifting all the closed sets of $\cs$ but $\emptyset$ all at once, i.e., putting $\cc{L} = \cs \setminus \{\emptyset\}$, also provides a closure space with valid $E$-base.
In fact, in this case, the $E$-base of $(T, \psi)$ will be as large as it can be: for every non-ji closed set $F$, and every $\psi^b$-minimal spanning set $A$ of $F$, $A$ will be a $E$-generator of almost prime elements that, using binary implications, will directly reach all remaining elements of $F$.
We observe though that, as compared to our initial approach, lifting all closed sets leads to lift unneccessary closed sets such as ji ones or closed sets that do not subsume any essential closed sets.

\section{Discussion} \label{sec:conclusion}

The study of the $E$-base and the classes of lattices where it is valid was initiated by Adaricheva et al.~\cite{adaricheva2013ordered}, who showed that, even though it is not valid in general, it is for closure spaces without $D$-cycles, i.e., lower bounded lattices.
In this contribution, we continued this line of work using the canonical base and its properties as our main weapon. 
We investigated three classical generalizations of Boolean and/or distributive lattices being modular, geometric, and semidistributive lattices, as well as the class of lattices with valid $E$-base.
To conclude, we discuss possible directions for future research, and propose a couple of open questions in this regard.

In Section~\ref{sec:E-base} we gave examples exhibiting different degrees to which the $E$-base can fail to be valid.
An essential set can be faulty because of (at least) the following three increasingly demanding reasons: it is not spanned by any $E$-generator, one of its pseudo-closed set is not subsumed by an $E$-generator, or the $E$-generators included in the pseudo-closed sets do not allow to recover the essential set properly.
This distinction suggests that even though a class of lattices might fail to have valid $E$-base in general, it can still obey some degree of validity. 
This leads to our first two questions:
\begin{question}
Are there classes of lattices where the $E$-base is not valid in general, but where essential sets are always spanned by some $E$-generator?
\end{question}
\begin{question}
Are there classes of lattices where the $E$-base is not valid in general, but the right-closure (replacing implications $A \imp B$ by $A \imp \cl(A)$) of the $E$-base is valid?
\end{question}
Notice that join-distributive, upper-bounded, geometric, and modular lattices are not such classes as proved by Examples~\ref{ex:fail-jdis}, \ref{ex:fullfail-geom} and \ref{ex:fail-mod} respectively.
An interesting way to tackle this question would be to craft maps that, contrarily to Section~\ref{sec:sublattice}, embed a lattice with invalid $E$-base in a lattice of the same class that has an essential set not spanned by $E$-generator.

In Sections~\ref{sec:semidistributive}, \ref{sec:geometric}, and \ref{sec:modular}, we proved that the $E$-base of semidistributive lattices is valid and characterized the modular and geometric lattices with valid $E$-base.
This study paves the way for further explorations of other classes of lattices such as join-distributive lattices or convex geometries for instance.
Within the class of semidistributive lattices, those without $D$-cycles are also covered by the result established in \cite{adaricheva2013ordered}.
Therefore, the new cases for which $E$-base is valid are closure spaces that are semidistributive and possess $D$-cycles.  
For instance in Example \ref{ex:carpet}, we have $D$-cycle $c D f D g D d D c$. 
The non-binary implications in the $E$-base have exactly these elements and their $D$-generators $A_k$.
Other examples confirm this observation, which brings us to the following question.

\begin{question}
Does the $E$-base of a closure space with semidistributive lattice and $D$-cycles always include implications $A_k\to x_k$, where some $x_{k-1}\in A_k$ and $x_{k-1}, x_k$ are consecutive members of some $D$-cycle?
\end{question}

Besides, geometric lattices seem to be an interesting class to study further.
Even though we characterized those geometric lattices that have valid $E$-base, it would be interesting to identify more classes of matroid, on top of binary ones, that fall into the scope of Theorem~\ref{thm:matroid-valid}.

\begin{question} \label{ques:matroid}
What are the matroids for which $E$-base is valid?
\end{question}

Given Theorem~\ref{thm:matroid-valid}, answering Question~\ref{ques:matroid} leads to the question of characterizing essential sets of geometric lattices from the perspective of matroids.

\begin{question}
Is there a characterization of essential sets of geometric lattices in terms of bases or circuits of matroid?
\end{question}

While there exists some preliminary results on the question~\cite{duquenne1991core,wild1994theory}, there is no full characterization to our knowledge.

More generally, upper-semimodular lattices are a natural generalization of both geometric and modular lattices.
Hence, it is also a natural candidate for future investigations.
For the moment though we observe that none of the characterization we used for geometric or modular lattices does actually characterize upper-semimodular lattices with valid $E$-base, as shown in the next example.

\begin{example} \label{ex:config-usm}
The closure space associated whose associated lattice is given in Figure~\ref{fig:usm-config} is upper-semimodular and has valid $E$-base.
Its canonical base and $E$-base are given by:
\begin{align*}
\is_{DG} & = \{f \imp a, d \imp c, e \imp c\} & (\text{binary}) \\ 
& \cup \{\agset{ab \imp c}, \agset{ac \imp b}, \agset{bc \imp a}\} & (\agset{abc}) \\
& \cup \{\agset{abcf \imp de}, \agset{abcd \imp ef}, \agset{abce \imp df}, \agset{cde \imp bcf} \} & (\agset{abcdef}) \\
& & \\ 
\is_E & = \{f \imp a, d \imp c, e \imp c\} & (\text{binary}) \\ 
& \cup \{\agset{ab \imp c}, \agset{ac \imp b}, \agset{bc \imp a}\} & (\agset{abc}) \\
& \cup \{\agset{ad \imp ef}, \agset{ae \imp df}, \agset{bd \imp ef}, \agset{be \imp df}, \agset{bf \imp de}, \agset{cf \imp de}, \agset{de \imp f} \} & (\agset{abcdef})
\end{align*}
Observe that $\agset{abc}$ and $\agset{abcdef}$ are comparable essential sets.
Moreover, for any predecessor $C$ of $\agset{abcdef}$, we have $\card{\agset{abcdef} \setminus C} > 1$.
\begin{figure}[ht!]
    \centering
    \includegraphics[scale=\FIGusm]{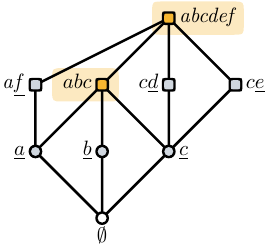}
    
    \includegraphics[page=4]{figures/captions.pdf}
    \caption{An upper-semimodular lattice with valid $E$-base that has comparable essentials, and where the (set-)difference between an essential set and one of its predecessors can be greater than one.}
    \label{fig:usm-config}
\end{figure}
\end{example}

Finally, we proposed in Section~\ref{sec:sublattice} a construction that embeds any lattice into a lattice with valid $E$-base by iteratively lifting faulty essential sets.
The lifting operation consists in inserting a new closed set between the essential set and each of its predecessor.
However, if one selects in a more surgical way the predecessors to which apply the operation, possibly according to the pseudo-closed sets that are not mapped to their closure in the $E$-base, then one might end up with a lattice with valid $E$-base that has fewer closed sets.
This idea is illustrated in Figure~\ref{fig:optimal-lift} where we improve the lifting of the lattice in Figure~\ref{fig:faulty-lifting}.
We are thus led to the following question:

\begin{question}
Given a lattice $(\cs, \subseteq)$, what is the minimal possible size of a lattice $(\cc{F}, \subseteq)$ such that $(\cs, \subseteq)$ is a sublattice of $(\cc{F}, \subseteq)$ and $(\cc{F}, \subseteq)$ has valid $E$-base?
\end{question}

\begin{figure}[ht!]
    \centering
    \includegraphics[page=2, scale=\FIGminlift]{figures/faulty-lifting.pdf}
    \includegraphics[width=\textwidth, page=2]{figures/captions.pdf}
    \caption{Lifting only some predecessors of a faulty essential set can lead to a smaller extension with valid $E$-base.}
    \label{fig:optimal-lift}
\end{figure}

As a second observation, our construction does not, in general, preserve semidistributive or semimodular laws.
We give in Figure~\ref{fig:grid-lift} an example where all four laws are lost during lifting.
This leads to wonder about embeddings that would preserve the class of lattice at hand:

\begin{question}
If a class of lattices does not have valid $E$-base in general, does it admit a lattice embedding that embeds any lattice with non-valid $E$-base in a lattice with valid one?
\end{question}

\begin{figure}[ht!]
    \centering
    \includegraphics[scale=\FIGgridlift]{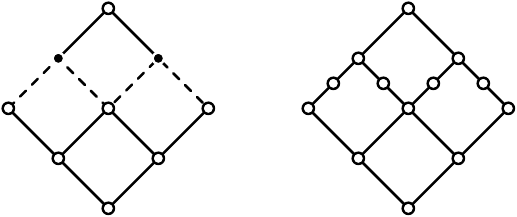}
    \caption{On the left a distributive lattice with two elements to lift (in black). 
    The resulting lattice, on the right, fails upper- and lower-semimodularity as well as join- and meet-semidistributivity.}
    \label{fig:grid-lift}
\end{figure}

\bibliographystyle{alphaurl}
\bibliography{biblio}

\appendix

\section{Proofs of Section~\ref{sec:sublattice}} \label{app:proofs}

\LEMclosuresystem*

\begin{proof}
We have $F_m = C_m \cup f(C_m) = T \in \cc{F}_\cs$, hence $T \in \cc{F}$, by definition of $\cc{F}$.
Now we prove that the intersection of any two incomparable closed sets of $\cc{F}$ lies in $\cc{F}$.
We have three cases to consider depending on whether the closed sets belong to $\cc{F}_\cs$ or $\cc{F}_Y$.
We show that it is in fact sufficient to consider only closed sets in $\cc{F}_\cs$, i.e., that for every distinct (and incomparable) $F_{i,j}, F_{k,\ell} \in \cc{F}_Y$ one has:
\[
    F_{i,j} \cap F_{k,\ell} = F_{i,j} \cap F_k = F_i \cap F_{k,\ell} = F_i \cap F_k
\]
Recalling that $F_{i,j} = F_i \cup \{y_{i,j}\}$ and $F_{k,\ell} = F_k \cup \{y_{k,\ell}\}$, we have:
\begin{align*}
    F_{i,j} \cap F_{k,\ell} & = (F_i \cup \{y_{i,j}\}) \cap (F_k \cup \{y_{k,\ell}\}) \\ 
    & = (F_i \cap F_k) \cup  (F_i \cap \{y_{k,\ell}\}) \cup (F_k \cap \{y_{i,j}\}) \cup (\{y_{i,j}\} \cap \{y_{k,\ell}\})
\end{align*}
Since $F_{i,j}$ and $F_{k,\ell}$ are distinct, $y_{i, j}$ and $y_{k,\ell}$ are also distinct, and $\{y_{i,j}\} \cap \{y_{k,\ell}\} = \emptyset$ holds.
Then, suppose for contradiction that $(F_i \cap \{y_{k,\ell}\}) \neq \emptyset$, i.e., that $y_{k,\ell} \in f(C_i)$.
We deduce that $C_\ell \subseteq C_i$ and hence $F_{k,\ell} \subseteq F_i \subseteq F_{i,j}$, a contradiction with $F_{i,j}$ and $F_{k,\ell}$ being incomparable.
It follows that $F_i \cap \{y_{k,\ell}\} = \emptyset$ and similarly $(F_k \cap \{y_{i,j}\}) = \emptyset$.
We derive $F_{i,j} \cap F_{k,\ell} = F_{i,j} \cap F_k = F_i \cap F_{k,\ell} = F_i \cap $ as expected.
Now, 
\begin{align*}
    F_i \cap F_k & = (C_i \cup f(C_i)) \cap (C_k \cup f(C_k)) & \\ 
    & = (C_i \cap C_k) \cup (f(C_i) \cap f(C_k)) & \text{since $C_i \cap f(C_k) = C_k \cap f(C_i) = \emptyset$} \\ 
    & = (C_i \cap C_k) \cup f(C_i \cap C_k) & \text{as $y_{p, q} \in f(C_i) \cap f(C_k)$ iff $C_q \subseteq C_i \cap C_k$}
\end{align*}
From $C_i \cap C_k \in \cs$, we obtain $F_i \cap F_k \in \cs$, and we conclude that $(T, \cc{F})$ is indeed a closure system.
\end{proof}

\PROPproppsi*

\begin{proof}
We prove the items in order.

\textbf{Item 1.} Let $Z \subseteq \U$. 
Observe that $\cl(Z) \cup f(\cl(Z)) \in \cc{F}$ holds by definition of $\cc{F}$.
Now, let $F \in \cc{F}$ such that $Z \subseteq F$.
We prove that $\cl(Z) \cup f(\cl(Z)) \subseteq F$.
We can assume $F \in \cc{F}_\cs$ as $F \in \cc{F}_Y$ entails $Z \subseteq F \setminus \{y_{i,k}\}$ where $F = F_{i,k} = F_i \cup \{y_{i,k}\}$.
By definition of $F$, there exists $C \in \cs$ such that $F = C \cup f(C)$.
Therefore, $\cl(Z) \subseteq C$ holds as $\cl(Z) \subseteq F \cap \U$.
This completes the proof that $\psi(Z) = \cl(Z) \cup f(\cl(Z))$. 

\textbf{Item 2.} Let $y_{i,j} \in Y$.
We show that $\psi(y_{i,j}) = F_{i,j}$ using double-inclusion.
The $\subseteq$ part follows from the definition of $\cc{F}$.
To prove the $\supseteq$ part, we show that $F_{i,j} \subseteq F$ for any $F \in \cc{F}$ such that $y_{i,j} \in F$.
Consider such a $F$.
Either $F \in \cc{F}_\cs$ or $F \in \cc{F}_Y$.
In the case $F \in \cc{F}_\cs$, $F = F_k = C_k \cup f(C_k)$ for some $C_k \in \cs$.
Then, $y_{i,j} \in F$ implies $C_j \subseteq C_k$ by definition of $f$.
Therefore, $F_{i, j} \subseteq C_j \cup f(C_j) \subseteq C_k \cup f(C_k) = F_k$ holds.
As for the case where $F \in \cc{F}_Y$, there exists $y_{k,\ell}$ such that $F = F_k \cup f(C_k) \cup \{y_{k,\ell}\}$.
If $y_{k,\ell} = y_{i,j}$ then $F = F_{i,j}$ readily holds, otherwise $y_{i,j} \in F_k$ and the previous reasoning applies.
We deduce that $\psi(y_{i,j}) = F_{i,j}$ as required.
\end{proof}

\CORpsistandard*

\begin{proof}
We show that for any $z \in T$, $\psi(z) \setminus \{z\} \in \cc{F}$.
We have two cases: $z \in Y$ or $z \in \U$.
We start with the case $z \in Y$.
Let $z = y_{i,j}$ for some $y_{i,j} \in Y$.
By Proposition~\ref{prop:prop-psi}, $\psi(y_{i,j}) = F_i \cup \{y_{i,j}\}$.
As $F_i \in \cc{F}$, we directly obtain $\psi(y_{i,j}) \setminus \{y_{i,j}\} \in \cc{F}$.

We consider now the case when $z \in \U$.
We have $\psi(z) = \cl(z) \cup f(\cl(z))$ by Proposition~\ref{prop:prop-psi}.
Let us put $C_j = \cl(z)$.
Given that $(\U, \cs)$ is standard, $C_j$ is ji in $(\cs, \subseteq)$, and hence it has a unique predecessor $C_i = C_j \setminus \{z\}$.
We have two cases, $C_j \in \cc{L}$ or $C_j \notin \cc{L}$.
In the first case, we obtain $\psi(z) = C_j \cup f(C_j) = C_j \cup f(C_i) \cup \{y_{i,j}\}$.
Then, we deduce $(C_j \cup f(C_j)) \setminus \{z\} = (C_j \setminus \{z\}) \cup f(C_i) \cup \{y_{i,j}\} = F_{i,j}$ with $F_{i,j} \in \cc{F}$.
If on the other hand $C_j \notin \cc{L}$, then $\psi(z) = C_j \cup f(C_i) = F_i \cup \{z\}$ with $F_i \in \cc{F}$.
Since for any $z$, $\psi(z) \setminus \{z\} \in \cc{F}$, we conclude that $(T, \cc{F})$ is standard as required.
\end{proof}

\LEMFsubdivision*

\begin{proof}
We prove the items in order.

\textbf{Item 1.}
The fact that $F_i$ is the unique predecessor of $F_{i,j}$ follows from the definition of $\cc{F}_Y$ as well as from Proposition~\ref{prop:prop-psi}.
Consider $F_j$.
We have $F_{i,j} \subseteq F_j$ by definition.
We show that for any $F \in \cc{F}$, $F_{i, j} \subset F$ implies $F_j \subseteq F$.
If $F \in \cc{F}_\cs$ then $F = C_k \cup f(C_k)$ for some $C_k \in \cs$.
By definition of $f$, $y_{i,j} \in f(C_k)$ entails $F_j \subseteq F$.
If $F \in \cc{F}_Y$ and $F_{i,j} \subset F$, then $F = F_k \cup \{y_{k,\ell}\}$ for some $y_{k,\ell} \neq y_{i,j}$.
We obtain $F_j \subseteq F$ again.
This concludes the proof that $F_j$ is the unique successor of $F_{i,j}$ in $(\cc{F}, \subseteq)$. 

\textbf{Item 2.}
The fact that the $F_{i,j}$'s are predecessors of $F_j$ is a consequence of the first case.
We need to argue that there are no other predecessors.
Assume for contradiction there is another predecessor $F$ of $F_j$.
By Item 1 and by definition of $\cc{F}_Y$, $F$ cannot belong to $\cc{F}_Y$ as otherwise it would be equal to one of the $F_{i, j}$'s such that $C_i \prec C_j$. 
Hence, $F = C_k \cup f(C_k)$ for some $C_k \subseteq C_j$ in $\cs$.
As $F \subset F_j$, we moreover has $C_k \subset C_j$.
Hence, there exists $C_i \subseteq C_j$ such that $C_k \subseteq C_i \subset C_j$ and we obtain $F_k \subseteq F_{i,j} \subset F_j$.
This contradicts $F$ being a predecessor of $F_j$.

\textbf{Item 3.} 
Since $f$ is monotone, we readily have that $F_i \subseteq F_j$ for any $C_i \prec C_j$ in $\cs$.
Moreover, all the $F_i$'s are incomparable as the $C_i$'s are.
We show that any $F \in \cc{F}$ such that $F \subset F_j$ satisfies $F \subseteq F_i$ for some $F_i$ such that $C_i \prec C_j$.
If $F \in \cc{F}_\cs$, then $F = C_k \cup f(C_k)$ for some $C_k \in \cs$.
As for any $C, C' \in \cs$, $C \cup f(C) \subseteq C' \cup f(C')$ iff $C \subseteq C'$, we further obtain $C_k \subset C_j$.
Therefore, there exists $C_i \prec C_j$ in $\cs$ such that $C_k \subseteq C_i \subset C_j$ and hence $F \subseteq F_i \prec F_j$ holds.
Assume now that $F \in \cc{F}_Y$.
Then, $F = F_k \cup \{y_{k,\ell}\}$ for some $C_k \prec C_\ell$ such that $C_\ell \in \cc{L}$. 
Since $F \subset F_j$, $y_{k,\ell} \in F_j$ and $C_\ell \subseteq C_j$ follows.
As $C_\ell \in \cc{L}$ and $C_j \notin \cc{L}$ by assumption, we deduce $C_\ell \subset C_j$.
Therefore, there exists $C_i$ a predecessor of $C_j$ such that $C_\ell \subseteq C_i \subset C_j$.
We deduce $F \subset F_\ell \subseteq F_i \subset F_j$.
This concludes the proof.
\end{proof}

\end{document}